\newtheorem{theorem}{Theorem}[section]
\newtheorem{lemma}[theorem]{Lemma}
\newtheorem{prop}[theorem]{Proposition}
\newtheorem{corollary}[theorem]{Corollary}
\theoremstyle{definition}
\newtheorem{example}[theorem]{Example}
\newtheorem{rmk}[theorem]{Remark}
\newtheorem{remark}[theorem]{Remark}
\numberwithin{equation}{section}
\journal{Journal of Approximation Theory}
\begin{document}

\begin{frontmatter}


\title{Exceptional Bannai-Ito polynomials\tnoteref{label1}}
\tnotetext[label1]{Partially supported by JSPS KAKENHI (Grant Number 16K13761).}

\author[LY]{Yu Luo\corref{cor1}}
\cortext[cor1]{Corresponding author.}
\ead{luo.yu.68e@st.kyoto-u.ac.jp}

\author[TS]{Satoshi Tsujimoto}
\ead{tsujimoto.satoshi.5s@kyoto-u.jp}

\address[LY,TS]{Department of Applied Mathematics and Physics, Graduate School of Informatics, Kyoto University, Yoshida-Honmachi, Sakyo-ku, Kyoto 606-8501, Japan}

\begin{abstract}
\par
We construct a nontrivial type of 1-step exceptional Bannai-Ito polynomials which satisfy a discrete orthogonality 
by using a generalized Darboux transformation. 
In this generalization, the Darboux transformed Bannai-Ito operator is directly obtained through an intertwining relation.
Moreover, the seed solution, which consists of a gauge factor and a polynomial part, 
plays an important role in the construction of these 1-step exceptional Bannai-Ito polynomials. 
And we show that there are 8 classes of gauge factors. 
We also provide the eigenfunctions of the corresponding multiple-step exceptional Bannai-Ito operator 
which can be expressed as a $3\times 3$ determinant. 
\end{abstract}

\begin{keyword}
Exceptional orthogonal polynomials \sep Dunkl shift operator \sep
Bannai-Ito polynomials \sep a generalized Darboux transformation

\MSC[2010] primary 33C45 \sep 33C47 \sep secondary 42C05
\end{keyword} 

\end{frontmatter}

\section{Introduction}
\subsection{Bannai-Ito polynomials}
\par
The Bannai-Ito polynomials, originally introduced in \cite{BI}, 
are recently classified as a new kind of ``classical" orthogonal polynomials \cite{DopBI} 
since they are identified to be the eigenfunctions of the difference operator
\begin{equation}
\label{eq:opBI}
H=\alpha(x)(R-I)+\beta(x)(TR-I).
\end{equation}
$H$ is a Dunkl shift operator, where $R$ is the reflection operator, $T$ is the forward shift operator, 
and $I$ is the identity operator acting as
$$
R[f(x)]=f(-x), \quad T[f(x)]=f(x+1), \quad I[f(x)]=f(x).
$$
Throughout this paper 
the operators $R$, $T$ and $I$ only influence $x$, for example, 
$R[f(x+1)]=f(-x+1)$ not $f(-x-1)$, and $T[f(-x)]=f(-x-1)$ not $f(-x+1)$.
It is known that the Dunkl shift operator has polynomial eigenfunctions of all degrees if and only if 
its coefficients are given by
\begin{equation}
\label{eq:BIcoes}
\alpha(x)=\frac{(x-\rho_1)(x-\rho_2)}{-2x}, \quad \beta(x)=\frac{(x-r_1+1/2)(x-r_2+1/2)}{2x+1},
\end{equation}
where $r_1$, $r_2$, $\rho_1$, $\rho_2$ are real numbers.
In this setting the Dunkl shift operator is called the Bannai-Ito operator. 
The eigenfunctions of the Bannai-Ito operator are the Bannai-Ito polynomials $B_n(x)$,  
$$
H[B_n(x)]
=\alpha(x)(B_n(-x)-B_n(x))+\beta(x)(B_n(-x-1)-B_n(x))
=\lambda_nB_n(x), 
$$
where $B_n(x)$ is of degree $n$, and the eigenvalues are given by 
\vspace{-1mm}
\begin{empheq}[left={\lambda_n=\empheqlbrace}]{align}
\frac{n}{2},\hspace{3.3cm} & \quad \text{ if } n \text{ is even,}  \\
r_1+r_2-\rho_1-\rho_2-\frac{n+1}{2}, & \quad \text{ if } n \text{ is odd.} 
\end{empheq}
One should notice that the Bannai-Ito polynomials are defined upon four parameters, 
i.e. $B_n(x):=B_n(x;\rho_1,\rho_2,r_1,r_2)$, we use the former for simplicity. 

\par
The Bannai-Ito polynomials are discrete orthogonal polynomials. 
They are orthogonal with respect to a discrete, positive measure of weight function $\omega(x)$ 
on the Bannai-Ito grid $\{x_s\}^{N-1}_{s=0}$:
\vspace{-1mm}$$
\sum^{N-1}_{s=0}\omega(x_s)B_n(x_s)B_m(x_s)=h_n\delta_{nm} \hspace{2mm} 
(h_n>0; \hspace{2mm}  0\leq n,m<N),
$$
where $x_s$ ($s=0,1,\ldots,N-1$) are the simple roots of $B_{N}(x)$ \cite{DopBI}.

\subsection{Exceptional orthogonal polynomials}
\par
The last decade has witnessed exciting developments in a new class of generalization of the 
classical orthogonal polynomials (COPs),
which were given a kindly confusing name, 
the ``exceptional" orthogonal polynomials (XOPs). 
It is necessary to remark that the XOPs are a generalization of the COPs, 
since they have the latter as a special case. 
Here we say a polynomial $P_n(x)$ is of degree $n$ if the highest degree of its monomial is $n$. 
In most cases the XOPs have ``gaps" in their degree sequences, 
i.e. there are a finite number of missing degrees in their polynomial sequences 
(while the degree sequences of COPs are $\{0,1,2,\ldots\}$).
Extensive interest have been devoted to many aspects of the theory of XOPs.
See, for instance, \cite{GKM09} for the introduction of the exceptional flag which gives birth to XOPs,  
and \cite{GKM10,EH,XLJ,XOP} for a systematic way of constructing XOPs satisfying 2nd-order differential equations 
(or 2nd-order difference equations in \cite{ECH,EML,EHJ}),
as well as \cite{BEOP} for a complete classification of the (continuous) XOPs.

\par
In the construction of XOPs, the Darboux transformation (DT) plays an important role. 
It was further clarified that multiple-step or higher order DTs lead to XOPs 
labelled by multi-indices \cite{2DT,MDBT,MIOP}.
The 1-step (rational) DT was conducted on a 2nd-order differential operator 
$T=p(x)D_{xx}+q(x)D_{x}+r(x)$ and acts as $T\mapsto T^{(1)}$:
\begin{equation}
\label{eq:DT}
T=\mathcal{B}\circ\mathcal{A}+\lambda, \quad
T^{(1)}=\mathcal{A}\circ\mathcal{B}+\lambda,
\end{equation}
where $\mathcal{A}$, $\mathcal{B}$ are 1st-order differential operators \cite{GKM10}.
An immediate consequence of (\ref{eq:DT}) can be derived as the following intertwining relations
$$
\mathcal{A}\circ T=T^{(1)}\circ\mathcal{A}, \quad
T\circ\mathcal{B}=\mathcal{B}\circ T^{(1)},
$$
which imply that the eigenvalue problem $T[y]=\lambda y$ is equivalent to 
the eigenvalue problem $T^{(1)}[y^{(1)}]=\lambda y^{(1)}$ where $y^{(1)}=\mathcal{A}[y]$.
Thus with a well selected ``seed solution" $\phi$ such that $T[\phi]=\mu\phi$ and $\mathcal{A}[\phi]=0$, 
it simply deletes degrees in the eigenfunction sequence $\{y^{(1)}\}$ of the 
Darboux transformed operator $T^{(1)}$.

\par
The purpose of this paper is to establish an XOPs extension of the Bannai-Ito polynomials 
by constructing the exceptional Bannai-Ito operators which have polynomial eigenfunctions 
of all but a finite number of degrees.
In Section 2, we apply a generalized DT on the Dunkl shift operator. 
As we have just addressed, the traditional DTs factorize a 2nd-order differential/difference 
operator into two 1st-order differential/difference operators, then a new 2nd-order operator can be obtained 
by exchanging these two 1st-order operators. 
However, the Dunkl shift operator $H$ is a 1st-order difference operator, 
which means that the traditional DTs work no more on this operator. 
Our idea is to start from the intertwining relations
$$
\mathcal{F}_{\phi}\circ H=H^{(1)}\circ\mathcal{F}_{\phi}, \hspace{2mm}
H\circ\mathcal{B}_{\phi}=\mathcal{B}_{\phi}\circ H^{(1)},
$$
where the operator $\mathcal{F}_{\phi}$ is a Dunkl shift operator satisfying $\mathcal{F}_{\phi}[\phi(x)]=0$, 
and $\phi(x)$ is an eigenfunction of $H$. 
Using this result, we are able to provide the 1-step exceptional Bannai-Ito operator explicitly. 
We also give more general results on the intertwining relations 
regarding the multiple-step exceptional Bannai-Ito operator, 
and derive the eigenfunctions of the latter thereafter. 

\par
In Section 3, we derive a special class of eigenfunctions of the Bannai-Ito operator, 
which are called the quasi-polynomial eigenfunctions. 
A quasi-polynomial eigenfunction consists of a gauge factor and a polynomial part. 
We show that there are 8 classes of gauge factors by comparing the coefficients of the 
conjugated Bannai-Ito operator and those of the Bannai-Ito operator.
From these quasi-polynomial eigenfunctions, in the next section, 
we will select seed solutions for the DT of the Bannai-Ito operator.

\par
In Section 4, we construct the 1-step exceptional Bannai-Ito polynomials and show that they are orthogonal 
with respect to a discrete measure on the exceptional Bannai-Ito grid. 
Interestingly enough, the degree sequences of the exceptional Bannai-Ito polynomials demonstrate different 
rules compared with the known 1-step XOPs. 
The positivity of the weight functions related to these 1-step exceptional Bannai-Ito polynomials is also considered, 
and we provide some sufficient conditions with respect to certain parameters.

\par
Finally, in Section 5, we give a concluding remark concerning the fact that the exceptional Bannai-Ito polynomials 
constructed in this paper are not the unique ones. 
The reason is that the auxiliary operator $\mathcal{F}_{\phi}$ which plays a crucial role in the construction of the 
exceptional Bannai-Ito polynomials is not unique. 
Some other candidates of this auxiliary operator are also provided, 
while the general cases shall be leaving as an open problem.

\section{Generalized Darboux transformation of Dunkl shift operator}
\par
It has been addressed before that the DT is one of the most powerful tools for constructing XOPs. 
The rational DTs (or the Darboux-Crum transformations) factorize a 2nd-order 
differential/difference operator into two 1st-order differential/difference operators, 
hence the exceptional operator can be obtained by exchanging the two 1st-order operators.
Unfortunately, in our case the Dunkl shift operator is a 1st-order difference operator, 
which makes it difficult to apply the rational DTs directly. 
In this section we present a generalized DT of the Dunkl shift operator, 
and then give exceptional Bannai-Ito operators explicitly.

\par
Consider the eigenvalue problem of the Dunkl shift operator
\vspace{-0.5mm}$$
H[\phi(x)]=\alpha(x)(R-I)[\phi(x)]+\beta(x)(TR-I)[\phi(x)]=\mu\phi(x),
$$
where
$\alpha(x)$, $\beta(x)$ are functions in $x$. 
Then we choose an eigenfunction $\phi(x)$ (not necessarily to be polynomial) of $H$ as a seed solution and let
\vspace{-0.2mm}\begin{eqnarray}
\chi(x) \hspace{-2.2mm}&=&\hspace{-2.2mm}(I-R)[\phi(x)]=\phi(x)-\phi(-x), \label{def: X} \\
\tilde{\chi}(x) \hspace{-2.2mm}&=&\hspace{-2.2mm} (I+TR)[\beta(-x-1)\phi(x)]=\beta(-x-1)\phi(x)+\beta(x)\phi(-x-1). \label{def: XX}
\end{eqnarray}
The functions $\chi(x)$ and $\tilde{\chi}(x)$ satisfy the following properties
$$
\chi(-x)=-\chi(x), \quad \tilde{\chi}(-x-1)=\tilde{\chi}(x),
$$
and the eigenvalue problem $H[\phi(x)]=\mu\phi(x)$ can be written as
\begin{equation}
\label{eq:2.3}
-\alpha(x)\chi(x)+\tilde{\chi}(x)=(\mu+\beta(x)+\beta(-x-1))\phi(x).
\end{equation}
Substituting $x$ by $-x-1$, the above equation becomes
\begin{equation}
\label{eq:2.4}
-\alpha(-x-1)\chi(-x-1)+\tilde{\chi}(x)=(\mu+\beta(-x-1)+\beta(x))\phi(-x-1),
\end{equation}
the operation $(\ref{eq:2.3})\cdot\beta(-x-1)+(\ref{eq:2.4})\cdot\beta(x)$ then implies
\begin{equation}
\label{eq:2.5}
\alpha(x)\beta(-x-1)\chi(x)+\alpha(-x-1)\beta(x)\chi(-x-1)+\mu\tilde{\chi}(x)=0.
\end{equation}
After substituting $x$ by $-x$ in $(\ref{eq:2.3})$ and then subtracting the result by $(\ref{eq:2.3})$ we have
\begin{equation}
\label{eq:2.6}
\tilde{\chi}(-x)-\tilde{\chi}(x)=
-\chi(x)(\mu+\alpha(x)+\alpha(-x)+\beta(x)+\beta(-x-1)),
\end{equation}
which holds under the condition
\begin{equation}
\label{eq:condition b}
\beta(-x)+\beta(x-1)=\beta(x)+\beta(-x-1).
\end{equation}
The properties of $\chi(x)$ and $\tilde{\chi}(x)$ are very important, 
as well as the equations (\ref{eq:2.3}), (\ref{eq:2.4}), (\ref{eq:2.5}), (\ref{eq:2.6}) and the condition (\ref{eq:condition b}),
which will be used frequently in the DT with respect to the Dunkl shift operator $H$. 

\par
Now we define the operator 
\begin{equation}
\label{eq:opF}
\mathcal{F}_{\phi}=\frac{1}{\chi(x)}(R-I)+\frac{1}{\tilde{\chi}(x)}(TR+I)\beta(-x-1)
\end{equation}
such that $\mathcal{F}_{\phi}[\phi(x)]=0$. 
It is easily seen that if the intertwining relation 
\begin{equation}
\label{eq:single Inter}
\mathcal{F}_{\phi}\circ H=H^{(1)}\circ \mathcal{F}_{\phi}
\end{equation}
holds for a new Dunkl shift operator $H^{(1)}$, 
then for any eigenfunction $\psi(x)$ ($\neq\phi(x)$) of $H$ with eigenvalue $\nu$ ($\neq\mu$)), one has
$$
H^{(1)}\big[\mathcal{F}_{\phi}[\psi(x)]\big]=
\mathcal{F}_{\phi}\circ H[\psi(x)]=\nu\cdot\mathcal{F}_{\phi}[\psi(x)].
$$
The equation with respect to the most left and the most right terms 
means that $\mathcal{F}_{\phi}[\psi(x)]$ is the eigenfunction of $H^{(1)}$ with eigenvalue $\nu$.
Before deriving the explicit expression of the operator $H^{(1)}$ 
we first give the following lemma for the convenience of calculation.
\begin{lemma} The operators $R$, $T$ and $I$ satisfy 
\vspace{-2mm}$$RR=I, \quad TRTR=I, \quad RTRT=I.$$
Moreover, for any function $f(x)$, it holds that 
\vspace{-1mm}
\begin{eqnarray*}
& &(R-I)f(x)(R-I) = (f(x)+f(-x))(I-R), \\
& &(TR+I)f(x)(TR+I) = (f(x)+f(-x-1))(TR+I), \\
& &(TR+I)f(x)(TR-I) = (f(x)-f(-x-1))(TR-I), \\
& &(TR-I)f(x)(TR+I) = (f(-x-1)-f(x))(TR+I), \\
& &(R-I)f(x)(TR-I) = f(-x)RTR-f(x)TR-f(-x)R+f(x)I, \\
& &(R-I)f(x)(TR+I) = f(-x)RTR-f(x)TR+f(-x)R-f(x)I, \\
& &(TR-I)f(x)(R-I) = f(-x-1)T-f(-x-1)TR-f(x)R+f(x)I, \\
& &(TR+I)f(x)(R-I) = f(-x-1)T-f(-x-1)TR+f(x)R-f(x)I.
\end{eqnarray*}

\end{lemma}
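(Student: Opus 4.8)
The plan is to reduce the entire lemma to elementary operator algebra grounded in the three action conventions $R[f(x)]=f(-x)$, $T[f(x)]=f(x+1)$, $I[f(x)]=f(x)$, together with the paper's stipulation that $R$ and $T$ act only on the explicit variable $x$. First I would dispose of the three relations $RR=I$, $TRTR=I$, $RTRT=I$ by direct evaluation on an arbitrary $f$. The first is immediate from $RR[f(x)]=R[f(-x)]=f(x)$. For the second, the only delicate point is the convention that $T$ reshifts the already-reflected argument, so $TR[f(x)]=T[f(-x)]=f(-x-1)$; applying $TR$ once more sends $f(-x-1)\mapsto f(x)$, giving $TRTR=I$. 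The relation $RTRT=I$ follows the same way (or by reading $TRTR=I$ from the other end after one use of $RR=I$). I expect this step to be essentially instantaneous.

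The key preparatory observation—and the device that makes the eight displayed identities painless rather than eight separate hand computations—is a set of \emph{twisting rules} describing how a multiplication operator $f(x)$ slides to the left past $R$, $T$ and $TR$. From the conventions one checks, where $f(x)$ now denotes the operator of multiplication by $f$,
$$
R\,f(x)=f(-x)\,R,\qquad T\,f(x)=f(x+1)\,T,\qquad TR\,f(x)=f(-x-1)\,TR,
$$
each being a one-line consequence of $R[f\cdot g]=f(-x)g(-x)$, $T[f\cdot g]=f(x+1)g(x+1)$ and $TR[f\cdot g]=f(-x-1)g(-x-1)$.

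With these in hand I would treat all eight identities uniformly. Expanding the product of the three factors yields four terms, each of the shape $X\,f(x)\,Y$ with $X,Y\in\{R,TR,I\}$; a single twisting rule then carries $f(x)$ all the way to the left, producing $(\text{shifted }f)\,XY$, and the operator word $XY$ collapses by $RR=I$ and $TRTR=I$ (so $R\cdot R=I$, $TR\cdot R=T$, $TR\cdot TR=I$, while $R\cdot TR=RTR$ is left as the composite that appears on the right-hand sides). Collecting the four terms reproduces the stated identity. For instance, in $(R-I)f(x)(TR-I)$ the term $R\,f(x)\,TR$ becomes $f(-x)\,RTR$ and $R\,f(x)$ becomes $f(-x)\,R$, giving exactly $f(-x)RTR-f(x)TR-f(-x)R+f(x)I$; the symmetric cases $(TR\pm I)f(x)(R-I)$ use instead $TR\cdot R=T$.

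The only genuine obstacle is bookkeeping: because the shift acts after the reflection, the arguments alternate among $-x$, $x+1$ and $-x-1$, and it is easy to drop a sign or to write $f(-x+1)$ where $f(-x-1)$ is intended. I would guard against this by fixing once and for all the evaluation $TR[f(x)]=f(-x-1)$ and deriving every twisting rule from it, so that the sign conventions are localized to the single preparatory step above and never re-derived inside the individual computations.
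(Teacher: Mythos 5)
Your proof is correct, and it is exactly the kind of elementary verification the paper has in mind (the paper simply omits the proof, stating the formulas follow from the definitions of $R$, $T$, $I$). Your twisting rules $R\,f(x)=f(-x)\,R$, $T\,f(x)=f(x+1)\,T$, $TR\,f(x)=f(-x-1)\,TR$, together with the collapses $RR=I$, $TR\cdot TR=I$, $TR\cdot R=T$, organize that calculation cleanly and reproduce all eight identities without error.
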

The above formulas follow from the definitions of $R$, $T$, $I$ and elementary calculations, 
thus we may omit the proof. 

\par
Using the formulas in Lemma 2.1 we are able to derive the explicit expression of the operator $H^{(1)}$ 
from (\ref{eq:single Inter}). 
Denote the coefficients of $H^{(1)}$ by $\alpha^{(1)}(x)$, $\beta^{(1)}(x)$ and $\gamma^{(1)}(x)$:
$$
H^{(1)}=\alpha^{(1)}(x)(R-I)+\beta^{(1)}(x)(TR-I)+\gamma^{(1)}(x).
$$

\begin{prop}
The Dunkl shift operator $H^{(1)}$ which satisfies (\ref{eq:single Inter}) with $\mathcal{F}_{\phi}$ given by (\ref{eq:opF}) is 
\begin{equation}
\label{eq:opH1}
H^{(1)}=\frac{\tilde{\chi}(-x)}{\chi(x)}(R-I)+\frac{\alpha(-x-1)\beta(x)\chi(-x-1)}{\tilde{\chi}(x)}(TR-I)+
\frac{\tilde{\chi}(-x)-\tilde{\chi}(x)}{\chi(x)}.
\end{equation}
\end{prop}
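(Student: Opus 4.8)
The plan is to read off the coefficients $\alpha^{(1)}(x)$, $\beta^{(1)}(x)$, $\gamma^{(1)}(x)$ directly from the intertwining relation (\ref{eq:single Inter}) by expanding both composite operators $\mathcal{F}_{\phi}\circ H$ and $H^{(1)}\circ\mathcal{F}_{\phi}$ in a common operator basis and matching coefficients. First I would fix that basis. Since $RR=I$ and $TRTR=I$, a one-line computation gives $R\,TR=RTR=T^{-1}$ and $TR\,R=T$, so the nine products of the building blocks $\{I,R,TR\}$ occurring in $H$ and in $\mathcal{F}_{\phi}$ close on the five linearly independent group elements
$$
\{\,I,\;R,\;T,\;TR,\;T^{-1}\,\}.
$$
Expanding each side in this basis then requires only the product formulas of Lemma 2.1 together with the elementary conjugation rules $R\,f(x)=f(-x)\,R$, $T\,f(x)=f(x+1)\,T$ and $TR\,f(x)=f(-x-1)\,TR$ for commuting a multiplication operator past $R$, $T$, $TR$.

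Next I would extract the two leading coefficients from the ``pure shift'' components, which are the cleanest because on each side they come from a single product. The $T^{-1}$ component of $\mathcal{F}_{\phi}\circ H$ arises solely from $\tfrac{1}{\chi(x)}R$ composed with the $TR$-part $\beta(x)\,TR$ of $H$, whereas on the right it arises solely from $\alpha^{(1)}(x)R$ composed with the $TR$-part $\tfrac{1}{\tilde{\chi}(x)}TR\,\beta(-x-1)$ of $\mathcal{F}_{\phi}$; after conjugating the multiplications through, equating the two gives $\alpha^{(1)}(x)=\tilde{\chi}(-x)/\chi(x)$. Symmetrically, the $T$ component isolates $\tfrac{1}{\tilde{\chi}(x)}TR\,\beta(-x-1)$ against $\alpha(x)R$ on the left and $\beta^{(1)}(x)TR$ against $\tfrac{1}{\chi(x)}R$ on the right, yielding $\beta^{(1)}(x)=\alpha(-x-1)\beta(x)\chi(-x-1)/\tilde{\chi}(x)$. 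These match the first two coefficients in (\ref{eq:opH1}), and, notably, they follow from pure operator algebra alone, without invoking the auxiliary identities (\ref{eq:2.3})--(\ref{eq:2.6}).

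With $\alpha^{(1)}$ and $\beta^{(1)}$ fixed, the remaining task is to match the $R$, $TR$ and $I$ components, and here the system is overdetermined: three equations but only the single unknown $\gamma^{(1)}$, which enters each of them through $\gamma^{(1)}\cdot\mathcal{F}_{\phi}$. One of the three (the $R$ component is the most convenient, since $\gamma^{(1)}$ appears there with the nonzero factor $1/\chi$) pins down $\gamma^{(1)}(x)$, and I expect its closed form to collapse, via (\ref{eq:2.6}), to the stated $\gamma^{(1)}(x)=(\tilde{\chi}(-x)-\tilde{\chi}(x))/\chi(x)$. The two leftover equations must then be verified as consistency conditions rather than definitions, and this is where the \emph{main obstacle} lies: the bookkeeping is heavy and the required cancellations hold only because of the special structure of $\chi$ and $\tilde{\chi}$. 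Concretely I would feed in the oddness $\chi(-x)=-\chi(x)$ and the symmetry $\tilde{\chi}(-x-1)=\tilde{\chi}(x)$, the mixed relation (\ref{eq:2.5}) governing the cross terms $\alpha(x)\beta(-x-1)\chi(x)+\alpha(-x-1)\beta(x)\chi(-x-1)=-\mu\tilde{\chi}(x)$, and the symmetry identity (\ref{eq:2.6}) (which itself rests on the condition (\ref{eq:condition b}) on $\beta$), rewriting one side repeatedly until it collapses onto the other. Producing these nontrivial cancellations in the overdetermined $R/TR/I$ block is the delicate step, whereas reading off $\alpha^{(1)}$ and $\beta^{(1)}$ from the shift components is entirely routine.
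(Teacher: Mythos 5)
Your proposal is correct and is essentially the paper's own proof: the paper likewise expands $\mathcal{F}_{\phi}\circ H$ and $H^{(1)}\circ\mathcal{F}_{\phi}$ via Lemma 2.1 in the same five-element operator basis (written there as $RTR$, $T$, $TR$, $R$, $I$, with $RTR=T^{-1}$) and matches coefficients, obtaining $\alpha^{(1)}$, $\beta^{(1)}$ from the $RTR$ and $T$ components and $\gamma^{(1)}$ from the remaining ones, with (\ref{eq:2.6}) supplying the closed form $(\tilde{\chi}(-x)-\tilde{\chi}(x))/\chi(x)$. Your additional observation that the $R$/$TR$/$I$ block is overdetermined and must be verified as consistent using (\ref{eq:2.5}), (\ref{eq:2.6}) and (\ref{eq:condition b}) is a refinement the paper leaves implicit, but it does not constitute a different method.
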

\begin{proof}
We derive $H^{(1)}$ directly from (\ref{eq:single Inter}). 
According Lemma 2.1, the left-hand side of (\ref{eq:single Inter}) is
$$
\mathcal{F}_{\phi}\circ H=\frac{\beta(-x)}{\chi(x)}RTR
+\frac{\beta(x)\alpha(-x-1)}{\tilde{\chi}(x)}T
-\bigg(\frac{\beta(x)\alpha(-x-1)}{\tilde{\chi}(x)}+\frac{\beta(x)}{\chi(x)}\bigg)TR
$$
$$\hspace{-4mm}
+\bigg(\frac{\beta(-x-1)\alpha(x)}{\tilde{\chi}(x)}-\frac{\alpha(x)+\alpha(-x)+\beta(-x)}{\chi(x)}\bigg)R
$$
$$\hspace{-5mm}
-\bigg(\frac{\beta(-x-1)\alpha(x)}{\tilde{\chi}(x)}-\frac{\alpha(x)+\alpha(-x)+\beta(x)}{\chi(x)}\bigg)I,
$$
and the right-hand side of (\ref{eq:single Inter}) is
$$\hspace{2mm}
H^{(1)}\circ \mathcal{F}_{\phi}=\frac{\alpha^{(1)}(x)\beta(-x)}{\tilde{\chi}(-x)}RTR
+\frac{\beta^{(1)}(x)}{\chi(-x-1)}T
-\bigg(\frac{(\alpha^{(1)}(x)-\gamma^{(1)}(x))\beta(x)}{\tilde{\chi}(x)}+\frac{\beta^{(1)}(x)}{\chi(-x-1)}\bigg)TR
$$
$$\hspace{-12mm}
+\bigg(\frac{\alpha^{(1)}(x)\beta(x-1)}{\tilde{\chi}(-x)}+\frac{\gamma^{(1)}(x)-\beta^{(1)}(x)}{\chi(x)}\bigg)R
$$
$$\hspace{6mm}
+\bigg(\frac{(\gamma^{(1)}(x)-\alpha^{(1)}(x))\beta(-x-1)}{\tilde{\chi}(x)}+\frac{\beta^{(1)}(x)-\gamma^{(1)}(x)}{\chi(x)}\bigg)I.
$$
Then by comparing the coefficients of the operators $RTR$, $T$, $TR$, $R$ and $I$ one has
\begin{equation}
\label{eq:coeH1 ab}
\alpha^{(1)}(x)=\frac{\tilde{\chi}(-x)}{\chi(x)}, \hspace{2mm}
\beta^{(1)}(x)=\frac{\alpha(-x-1)\beta(x)\chi(-x-1)}{\tilde{\chi}(x)},
\end{equation}
\vspace{-2mm}
\begin{equation}
\label{eq:coeH1 c}
\gamma^{(1)}(x)=\frac{\tilde{\chi}(-x)-\tilde{\chi}(x)}{\chi(x)}
=-(\mu+\alpha(x)+\alpha(-x)+\beta(x)+\beta(-x-1)),
\end{equation}
where the second equation in (\ref{eq:coeH1 c}) follows from (\ref{eq:2.6}).
\end{proof}

\par
If we add the intertwining relation
$
H\circ\mathcal{B}_{\phi}=\mathcal{B}_{\phi}\circ H^{(1)}
$
where $\mathcal{B}_{\phi}$ is also a Dunkl shift operator, then it will be given by 
(we rearranged it for the convenience of checking (\ref{eq:factorization1}) and (\ref{eq:factorization2}))
\begin{equation}
\label{eq:opB}
\mathcal{B}_{\phi}=\alpha(x)(R-I)\tilde{\chi}(x)+(TR+I)\alpha(x)\beta(-x-1)\chi(x).
\end{equation}
Further calculations imply that if the condition (\ref{eq:condition b}) holds
then the products of $\mathcal{F}_{\phi}$ and $\mathcal{B}_{\phi}$ 
can be expressed in terms of $H$ and $H^{(1)}$, respectively: 
\begin{eqnarray}
\mathcal{B}_{\phi}\circ \mathcal{F}_{\phi}
\hspace{-2.4mm}&=&\hspace{-2.4mm}
(H-\mu)\circ(H+\beta(x)+\beta(-x-1)), \label{eq:factorization1} \\
\mathcal{F}_{\phi}\circ \mathcal{B}_{\phi}
\hspace{-2.4mm}&=&\hspace{-2.4mm}
(H^{(1)}-\mu)\circ(H^{(1)}+\beta(x)+\beta(-x-1)).
\label{eq:factorization2}
\end{eqnarray}
The above equations can be considered as a generalized DT of the Dunkl shift operator $H$, 
and $H^{(1)}$ is the 1-step Darboux transformed Dunkl shift operator.

\begin{rmk}
In order to make sure that this generalized DT can be applied on $H^{(1)}$ in the same manner as before, 
the coefficients $\alpha^{(1)}(x)$, $\beta^{(1)}(x)$ and $\gamma^{(1)}(x)$ should satisfy the same properties as 
those of $H$. 
Again, we denote the 2-step Darboux transformed Dunkl shift operator by $H^{(2)}$ with the form 
$$
H^{(2)}=\alpha^{(2)}(x)(R-I)+\beta^{(2)}(x)(TR-I)+\gamma^{(2)}(x),
$$
and similarly with (\ref{eq:opF}) we define the operator
$$
\mathcal{F}^{(2)}=\frac{1}{\chi^{(2)}(x)}(R-I)+\frac{1}{\tilde{\chi}^{(2)}(x)}(TR+I)\beta^{(1)}(-x-1),
$$
where 
$\chi^{(2)}(x)=(I-R)[\phi^{(2)}_2(x)]$, $\tilde{\chi}^{(2)}(x)=(I+TR)[\beta^{(1)}(-x-1)\phi^{(2)}_2(x)]$, 
$\phi^{(2)}_2(x)$ is the second-step seed solution which is an eigenfunction of $H^{(1)}$ with eigenvalue $\mu_2$: 
$\phi^{(2)}_2(x)=\mathcal{F}_{\phi}[\phi^{(1)}_2(x)]$ 
(we may denote $\mathcal{F}_{\phi}$ by $\mathcal{F}^{(1)}$ and the first-step seed solution $\phi(x)$ by $\phi^{(1)}_1(x)$ 
such that $\phi^{(2)}_2(x)=\mathcal{F}^{(1)}[\phi^{(1)}_2(x)]$, $\phi^{(1)}_2(x)\neq \phi^{(1)}_1(x)$). 
Then it again follows from the following intertwining relation 
$$
\mathcal{F}^{(2)} \circ H^{(1)} = H^{(2)} \circ \mathcal{F}^{(2)}
$$
that 
$$
\alpha^{(2)}(x)=\frac{\tilde{\chi}^{(2)}(-x)}{\chi^{(2)}(x)}, \hspace{2mm}
\beta^{(2)}(x)=\frac{\alpha^{(1)}(-x-1)\beta^{(1)}(x)\chi^{(2)}(-x-1)}{\tilde{\chi}^{(2)}(x)},
$$
and 
$$
\gamma^{(2)}(x)
=\frac{\tilde{\chi}^{(2)}(-x)-\tilde{\chi}^{(2)}(x)}{\chi^{(2)}(x)}+\gamma^{(1)}(-x-1)
=\frac{\tilde{\chi}^{(2)}(-x)-\tilde{\chi}^{(2)}(x)}{\chi^{(2)}(x)}+\gamma^{(1)}(-x)
$$
$$\hspace{-40.5mm}
=\frac{\tilde{\chi}^{(2)}(-x)-\tilde{\chi}^{(2)}(x)}{\chi^{(2)}(x)}+\gamma^{(1)}(x).
$$
The three expressions of $\gamma^{(2)}(x)$ imply
$
\gamma^{(1)}(-x-1)=\gamma^{(1)}(-x)=\gamma^{(1)}(x),
$
which further requires 
$
\alpha(x)+\alpha(-x)=\alpha(-x-1)+\alpha(x+1)
$
and (\ref{eq:condition b}) according to (\ref{eq:coeH1 c}). 
In the second and the third equation with respect to $\gamma^{(2)}(x)$ we have assumed that 
$\beta^{(1)}(x)$ satisfies the condition (\ref{eq:condition b}). 
Note that these three expressions did not appear in the calculation of $\gamma^{(1)}(x)$ since $\gamma^{(0)}(x)=0$ in $H$. 
Therefore, by repeating this procedure we can conclude as follow. 
\end{rmk}

\begin{lemma}
Denote the n-step Darboux transformed Dunkl shift operator by $H^{(n)}$ ($n=1,2,\ldots$) with the form 
\begin{equation}
\label{eq:n-DTop}
H^{(n)}=\alpha^{(n)}(x)(R-I)+\beta^{(n)}(x)(TR-I)+\gamma^{(n)}(x).
\end{equation}
If the condition 
\begin{equation}
\label{eq:condition a}
\alpha(x)+\alpha(-x)=\alpha(-x-1)+\alpha(x+1)
\end{equation}
and $(\ref{eq:condition b})$ are satisfied, 
then the generalized DT can be applied on each $H^{(n)}$ ($n=1,2,\ldots$) in the same manner as we did on $H$.
\end{lemma}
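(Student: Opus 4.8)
The plan is to prove the statement by induction on $n$, the idea being to isolate the exact structural conditions on the coefficients of a Dunkl shift operator $K=a(x)(R-I)+b(x)(TR-I)+c(x)$ under which the construction of Section 2 (Proposition 2.2 together with the factorizations (\ref{eq:factorization1})--(\ref{eq:factorization2})) applies verbatim, and then to show that the transformation reproduces precisely those conditions one level up. Reading off the derivations of Proposition 2.2 and of Remark 2.3, these conditions are: (P1) $a(x)+a(-x)=a(-x-1)+a(x+1)$; (P2) $b(-x)+b(x-1)=b(x)+b(-x-1)$; and (P3) $c(-x-1)=c(-x)=c(x)$, i.e. $c$ is invariant under both $x\mapsto-x$ and $x\mapsto-x-1$ (equivalently, $c$ is even and $1$-periodic). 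Note that (P1) says $a(x)+a(-x)$ is $1$-periodic and (P2) says $b(x)+b(-x-1)$ is even. For the base case $n=0$ we have $H^{(0)}=H$ with $\gamma^{(0)}=0$, so (P3) holds trivially, while (P1) and (P2) are exactly the hypotheses (\ref{eq:condition a}) and (\ref{eq:condition b}).

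For the inductive step I would first record that $\chi^{(n+1)}(-x)=-\chi^{(n+1)}(x)$ and $\tilde\chi^{(n+1)}(-x-1)=\tilde\chi^{(n+1)}(x)$ hold automatically from the defining formulas of $\chi^{(n+1)}$ and $\tilde\chi^{(n+1)}$, independently of the step. The substantive point is that the eigenvalue equation $H^{(n)}[\phi^{(n+1)}_{n+1}]=\mu_{n+1}\phi^{(n+1)}_{n+1}$ now carries the extra diagonal term $\gamma^{(n)}(x)\phi$, so I would re-derive the step-$n$ analogues of (\ref{eq:2.3})--(\ref{eq:2.6}): the analogue of (\ref{eq:2.3}) acquires on its right-hand side the factor $\mu_{n+1}-\gamma^{(n)}(x)+\beta^{(n)}(x)+\beta^{(n)}(-x-1)$, and the analogue of (\ref{eq:2.6}) still goes through provided this factor is even, which is guaranteed by (P2) and (P3) at level $n$. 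The outcome I am after is the pair of identities
\[
\beta^{(n+1)}(x)+\beta^{(n+1)}(-x-1)=\gamma^{(n)}(x)-\mu_{n+1},\qquad
\alpha^{(n+1)}(x)+\alpha^{(n+1)}(-x)=\gamma^{(n+1)}(x)-\gamma^{(n)}(x),
\]
the first following from the step-$n$ analogue of (\ref{eq:2.5}) together with $\tilde\chi^{(n+1)}(-x-1)=\tilde\chi^{(n+1)}(x)$, the second from $\chi^{(n+1)}(-x)=-\chi^{(n+1)}(x)$ and the step-$n$ form of (\ref{eq:coeH1 c}).

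With these identities the inductive step closes formally. From the step-$n$ form of (\ref{eq:coeH1 c}), namely $\gamma^{(n+1)}(x)=\bigl(\tilde\chi^{(n+1)}(-x)-\tilde\chi^{(n+1)}(x)\bigr)/\chi^{(n+1)}(x)+\gamma^{(n)}(x)$, substituting the evaluated numerator expresses $\gamma^{(n+1)}$ as a combination of $\alpha^{(n)}(x)+\alpha^{(n)}(-x)$, $\beta^{(n)}(x)+\beta^{(n)}(-x-1)$, $\gamma^{(n)}(x)$ and the constant $\mu_{n+1}$; each summand is separately even (by (P2) and (P3) at level $n$) and separately invariant under $x\mapsto-x-1$ (by (P1) and (P3) at level $n$), so $\gamma^{(n+1)}$ satisfies (P3). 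This is exactly the compatibility forced in Remark 2.3 by the equality of the three expressions for $\gamma^{(2)}$. Given (P3) for $\gamma^{(n+1)}$, hence its $1$-periodicity and evenness, the two displayed identities then give (P1) for $\alpha^{(n+1)}$, since $\gamma^{(n+1)}-\gamma^{(n)}$ is $1$-periodic, and (P2) for $\beta^{(n+1)}$, since $\gamma^{(n)}-\mu_{n+1}$ is even. Thus (P1)--(P3) propagate from level $n$ to level $n+1$, which is precisely the assertion that the generalized DT applies to $H^{(n+1)}$ in the same manner.

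I expect the main obstacle to be the careful bookkeeping in re-deriving (\ref{eq:2.3})--(\ref{eq:2.6}) and the coefficient comparison of Proposition 2.2 in the presence of the nonzero diagonal term $\gamma^{(n)}(x)I$. This term is acted on from the left by the reflection and shift parts of $\mathcal{F}^{(n+1)}$, so it enters the coefficients of $R$, $TR$ and $I$ with arguments $-x$, $-x-1$ and $x$ respectively, producing the three expressions for $\gamma^{(n+1)}$ that coincide only when $\gamma^{(n)}$ obeys (P3). Confirming that (P3) is exactly this compatibility condition, and that it is the sole new constraint beyond (P1)--(P2), is the crux; once the two symmetric-sum identities are in hand, the propagation of (P1)--(P3) is purely algebraic.
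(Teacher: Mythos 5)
Your proposal is correct and follows essentially the same route as the paper: an induction that propagates the three structural conditions (\ref{eq:condition1 for n-DT})--(\ref{eq:condition3 for n-DT}) (your (P1)--(P3)), with the base case supplied by the hypotheses (\ref{eq:condition a}), (\ref{eq:condition b}) and $\gamma^{(0)}=0$, exactly as the paper does by iterating the procedure of Remark 2.3 together with the coefficient formulas (\ref{eq:coen-DT a,b}), (\ref{eq:coen-DT c}). If anything, your write-up is more careful than the paper's: your two symmetric-sum identities $\beta^{(n+1)}(x)+\beta^{(n+1)}(-x-1)=\gamma^{(n)}(x)-\mu_{n+1}$ and $\alpha^{(n+1)}(x)+\alpha^{(n+1)}(-x)=\gamma^{(n+1)}(x)-\gamma^{(n)}(x)$ correctly keep track of the diagonal term $\gamma^{(n)}(x)$ in the level-$n$ eigenvalue equation, a contribution that the paper's displayed formula (\ref{eq:coen-DT c}) suppresses.
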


\begin{proof}
By repeating the procedure in Remark 2.3 one finds that the conditions 
\begin{equation}
\label{eq:condition1 for n-DT}
\alpha^{(n)}(x)+\alpha^{(n)}(-x)=\alpha^{(n)}(-x-1)+\alpha^{(n)}(x+1),
\end{equation}
\begin{equation}
\label{eq:condition2 for n-DT}
\beta^{(n)}(-x)+\beta^{(n)}(x-1)=\beta^{(n)}(x)+\beta^{(n)}(-x-1),
\end{equation}
\begin{equation}
\label{eq:condition3 for n-DT}
\gamma^{(n)}(-x-1)=\gamma^{(n)}(-x)=\gamma^{(n)}(x).
\end{equation}
are sufficient for deriving the coefficients $\alpha^{(n+1)}(x)$, $\beta^{(n+1)}(x)$, $\gamma^{(n+1)}(x)$.
On the other hand, from the calculations of $H^{(1)}$ and $H^{(2)}$ it is easy to conclude that for $n=1,2,\ldots$,
\begin{equation}
\label{eq:coen-DT a,b}
\alpha^{(n)}(x)=\frac{\tilde{\chi}^{(n)}(-x)}{\chi^{(n)}(x)}, \hspace{2mm}
\beta^{(n)}(x)=\frac{\alpha^{(n-1)}(-x-1)\beta^{(n-1)}(x)\chi^{(n)}(-x-1)}{\tilde{\chi}^{(n)}(x)},
\end{equation}
\begin{equation}
\label{eq:coen-DT c}
\gamma^{(n)}(x)=
-(\mu^{(n)}+\alpha^{(n-1)}(x)+\alpha^{(n-1)}(-x)+\beta^{(n-1)}(x)+\beta^{(n-1)}(-x-1))
\end{equation}
where the properties $\chi^{(n)}(-x)=-\chi^{(n)}(x)$, $\tilde{\chi}^{(n)}(-x-1)=\tilde{\chi}^{(n)}(x)$ always hold. 
Then inductively one can see that the conditions 
(\ref{eq:condition1 for n-DT}), (\ref{eq:condition2 for n-DT}), (\ref{eq:condition3 for n-DT}) 
hold for $n=1,2,\ldots$ if (\ref{eq:condition a}) and (\ref{eq:condition b}) are satisfied. 
\end{proof}

\par
Let us notice that the conditions (\ref{eq:condition a}) and (\ref{eq:condition b}) are already satisfied 
by the coefficients of the Bannai-Ito operator, 
thus a multiple-step DT can be performed on the Bannai-Ito operator smoothly. 
In the next subsection, we will derive the 1-step exceptional Bannai-Ito operator and the corresponding eigenfunctions. 
Some important expressions about the eigenfunctions of the $n$-step exceptional Bannai-Ito operator will also be given 
thereafter.

\subsection{Generalized Darboux transformation of the Bannai-Ito operator}
\par
If $H$ is a Bannai-Ito operator where $\alpha(x)$, $\beta(x)$ are defined by (\ref{eq:BIcoes}), 
then $H^{(1)}$ can be called the 1-step exceptional Bannai-Ito operator. 
In this case, it holds that
\begin{equation}
\label{eq:BIconsts}
\alpha(x)+\alpha(-x)=\rho_1+\rho_2:=\alpha, \quad
\beta(x)+\beta(-x-1)=-(r_1+r_2):=-\beta,
\end{equation}
hence  
\begin{equation}
\label{eq:}
\gamma^{(1)}(x)=-(\mu+\alpha(x)+\alpha(-x)+\beta(x)+\beta(-x-1))
=-(\mu+\alpha-\beta).
\end{equation}
Therefore, the 1-step exceptional Bannai-Ito operator becomes
\begin{equation}
\label{eq:XopBI}
H^{(1)}=\frac{\tilde{\chi}(-x)}{\chi(x)}(R-I)+\frac{\alpha(-x-1)\beta(x)\chi(-x-1)}{\tilde{\chi}(x)}(TR-I)
-(\mu+\alpha-\beta).
\end{equation}
For normalization purpose we may rewrite the operators $\mathcal{F}_{\phi}$, $\mathcal{B}_{\phi}$ 
and $H^{(1)}$ into
\begin{equation}
\label{normalizations}
\hat{\mathcal{F}}_{\phi}=r(x)\mathcal{F}_{\phi}, \quad
\hat{\mathcal{B}}_{\phi}=\mathcal{B}_{\phi}r^{-1}(x), \quad
\hat{H}^{(1)}=r(x)H^{(1)}r^{-1}(x),
\end{equation}
where $r(x)$ is a decoupling coefficient whose explicit expression will be given in Lemma 4.1.

\par
Adopting the notations of \cite{XOP} here we may call $\hat{\mathcal{F}}_{\phi}$ a ``dressing" 
operator and $\hat{\mathcal{B}}_{\phi}$ an ``undressing" operator in view of their acting as 
dressing and undressing of a superscript $^{(1)}$:
\vspace{-1mm}
\begin{empheq}[left={\psi^{(1)}(x):=\empheqlbrace}]{align}
\hat{\mathcal{F}}_{\phi}[\psi(x)], \hspace{15mm} & \text{if }\hat{\mathcal{F}}_{\phi}[\psi(x)]\neq 0 \\
\frac{\sigma(x)r(x)}{\tilde{\chi}(x)\chi(x)\alpha(x)\omega(x)}, \hspace{2mm} & \text{otherwise} 
\label{DTeigenfunction}
\end{empheq}
$$
\hat{\mathcal{B}}_{\phi}[\psi^{(1)}(x)]=(\nu-\mu)(\nu-\beta)\psi(x),
$$
where $\sigma(x)$ satisfies $\sigma(x+1)=\sigma(x)$ and $\sigma(-x)=-\sigma(x)$, 
for example, $\sigma(x)=\sin(2\pi x)$, 
and $\omega(x)$ is the weight function associated with the Bannai-Ito operator $H$.
It can be easily checked by using the results in Section 4 (Lemma 4.7) that 
\begin{equation}
\label{eq:2.29}
\hat{H}^{(1)}\bigg[\frac{\sigma(x)r(x)}{\tilde{\chi}(x)\chi(x)\alpha(x)\omega(x)}\bigg]=
\mu \frac{\sigma(x)r(x)}{\tilde{\chi}(x)\chi(x)\alpha(x)\omega(x)},
\end{equation}
thus $\psi^{(1)}(x)$ is the eigenfunction of the normalized 1-step exceptional Bannai-Ito operator $\hat{H}^{(1)}$.
Therefore, the DT 
$$
(H,\{\psi(x)\})\mapsto (\hat{H}^{(1)},\{\psi^{(1)}(x)\})
$$
is an isospectral transformation:
$$
H[\psi(x)]=\nu\psi(x), \quad
\hat{H}^{(1)}[\psi^{(1)}(x)]=\nu\psi^{(1)}(x).
$$
In fact, the equation (\ref{eq:2.29}) is equivalent with 
\begin{equation}
\label{eq:2.30}
H^{(1)}\bigg[\frac{\sigma(x)}{\tilde{\chi}(x)\chi(x)\alpha(x)\omega(x)}\bigg]=
\mu \frac{\sigma(x)}{\tilde{\chi}(x)\chi(x)\alpha(x)\omega(x)}
\end{equation}
in view of (\ref{normalizations}). 
Here we briefly check (\ref{eq:2.30}) using the expression (\ref{eq:opH1}).
$$
H^{(1)}\bigg[\frac{\sigma(x)}{\tilde{\chi}(x)\chi(x)\alpha(x)\omega(x)}\bigg]=
\frac{\tilde{\chi}(-x)}{\chi(x)}
\bigg(\frac{1}{\tilde{\chi}(-x)}-\frac{1}{\tilde{\chi}(x)}\bigg)
\frac{\sigma(x)}{\chi(x)\alpha(x)\omega(x)}
+\frac{\tilde{\chi}(-x)-\tilde{\chi}(x)}{\chi(x)}
\frac{\sigma(x)}{\tilde{\chi}(x)\chi(x)\alpha(x)\omega(x)}
$$
$$\hspace{31mm}
+\frac{\alpha(-x-1)\beta(x)\chi(-x-1)}{\tilde{\chi}(x)}
\bigg(\frac{-\beta(-x-1)}{\chi(-x-1)\alpha(-x-1)}-\frac{\beta(x)}{\chi(x)\alpha(x)}\bigg)
\frac{\sigma(x)}{\tilde{\chi}(x)\beta(x)\omega(x)}
$$
$$\hspace{9mm}
=\frac{\sigma(x)}{\tilde{\chi}(x)\omega(x)}
\bigg(\frac{-\beta(-x-1)}{\tilde{\chi}(x)}-\frac{\beta(x)\alpha(-x-1)\chi(-x-1)}{\tilde{\chi}(x)\chi(x)\alpha(x)}\bigg)
$$
$$\hspace{23mm}
=\frac{\sigma(x)}{\tilde{\chi}(x)\omega(x)}
\bigg(-\frac{\beta(-x-1)\alpha(x)\chi(x)+\beta(x)\alpha(-x-1)\chi(-x-1)}{\tilde{\chi}(x)\chi(x)\alpha(x)}\bigg).
$$
In the first equation, the properties $\chi(-x)=-\chi(x)$, $\tilde{\chi}(-x-1)=\tilde{\chi}(x)$, $\sigma(-x)=-\sigma(x)$, 
$\sigma(-x-1)=\sigma(-x)=-\sigma(x)$ and (\ref{eq:WC1}), (\ref{eq:WC2}) in Lemma 4.7 have been used. 
Then the first 2 terms in the right-hand side of the first equation annihilate immediately, 
the third term can be simplified as the right-hand side of the second equation. 
Finally, with the help of (\ref{eq:2.5}) we arrive at (\ref{eq:2.30}).

\par
In the same way it turns out that (see (\ref{eq:opB}))
\begin{equation}
\mathcal{B}_{\phi}\bigg[\frac{\sigma(x)}{\tilde{\chi}(x)\chi(x)\alpha(x)\omega(x)}\bigg]=0,
\end{equation}
since
$$
(R-I)\bigg[\frac{\sigma(x)}{\chi(x)\alpha(x)\omega(x)}\bigg]=0, \hspace{2mm}
(TR+I)\bigg[\frac{\beta(-x-1)\sigma(x)}{\tilde{\chi}(x)\omega(x)}\bigg]=0
$$
hold for the same properties as we listed in the previous paragraph.

\par
To summarize the above results we give the following theorem which serves as the formulation of the 
generalized Darboux transformation for the Bannai-Ito operator.
\begin{theorem}
\label{thm:gDT}
The 1-step exceptional Bannai-Ito operator $H^{(1)}$ 
and the Bannai-Ito operator $H$ satisfy 
the following intertwining relations:
\begin{equation}
\label{eq:gDT}
\mathcal{F}_{\phi}\circ H=H^{(1)}\circ \mathcal{F}_{\phi}, \quad
H\circ\mathcal{B}_{\phi}=\mathcal{B}_{\phi}\circ H^{(1)}
\end{equation}
where $\mathcal{F}_{\phi}$, $\mathcal{B}_{\phi}$ are both Dunkl shift operators (see (\ref{eq:opF}), (\ref{eq:opB})). 
The operator $\mathcal{F}_{\phi}$ satisfies the condition $\mathcal{F}_{\phi}[\phi(x)]=0$, 
where $\phi(x)$ is an eigenfunction of $H$ called a seed solution.
Moreover, the products of $\mathcal{F}_{\phi}$ and $\mathcal{B}_{\phi}$ can be expressed in terms of $H$ and $H^{(1)}$:
\begin{eqnarray}
\mathcal{B}_{\phi}\circ \mathcal{F}_{\phi}
\hspace{-2.4mm}&=&\hspace{-2.4mm}
(H-\mu)\circ(H-\beta), \label{eq:BIfactorization1} \\
\mathcal{F}_{\phi}\circ \mathcal{B}_{\phi}
\hspace{-2.4mm}&=&\hspace{-2.4mm}
(H^{(1)}-\mu)\circ(H^{(1)}-\beta).
\label{eq:BIfactorization2}
\end{eqnarray}
The relations (\ref{eq:BIfactorization1}), (\ref{eq:BIfactorization2}) follow from 
(\ref{eq:factorization1}), (\ref{eq:factorization2}) and (\ref{eq:BIconsts}).
\end{theorem}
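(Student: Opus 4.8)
The plan is to treat this theorem not as an independent computation but as the specialization to the Bannai-Ito coefficients of the general Dunkl-shift machinery already assembled above. Everything rests on three ingredients established earlier: the explicit form of $H^{(1)}$ in Proposition 2.2, the general factorizations (\ref{eq:factorization1})--(\ref{eq:factorization2}), and the two structural conditions (\ref{eq:condition a}), (\ref{eq:condition b}). Accordingly, the first thing I would do is verify that the Bannai-Ito coefficients (\ref{eq:BIcoes}) satisfy (\ref{eq:condition a}) and (\ref{eq:condition b}). Both reduce to constancy statements: a direct substitution shows that $\alpha(x)+\alpha(-x)=\rho_1+\rho_2$ and $\beta(x)+\beta(-x-1)=-(r_1+r_2)$ are independent of $x$, which is precisely (\ref{eq:BIconsts}); and since a constant is invariant under $x\mapsto -x$ and $x\mapsto x-1$, both conditions hold immediately.

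Next I would dispose of the routine claims. That $\mathcal{F}_{\phi}$ and $\mathcal{B}_{\phi}$ are Dunkl shift operators is clear from their definitions (\ref{eq:opF}), (\ref{eq:opB}), each being a coefficient combination of $R$, $TR$ and $I$. The annihilation property $\mathcal{F}_{\phi}[\phi(x)]=0$ I would check by direct evaluation on $\phi$: by (\ref{def: X}) the first term of (\ref{eq:opF}) gives $\chi(x)^{-1}(R-I)[\phi(x)]=\chi(x)^{-1}(\phi(-x)-\phi(x))=-1$, while by (\ref{def: XX}) the second term gives $\tilde{\chi}(x)^{-1}(TR+I)[\beta(-x-1)\phi(x)]=\tilde{\chi}(x)^{-1}\tilde{\chi}(x)=1$, so the two cancel.

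The first intertwining relation in (\ref{eq:gDT}) is exactly (\ref{eq:single Inter}), the content of Proposition 2.2; the only additional step is to insert (\ref{eq:BIconsts}) into (\ref{eq:coeH1 c}) to collapse $\gamma^{(1)}(x)$ to the constant $-(\mu+\alpha-\beta)$, yielding the Bannai-Ito form (\ref{eq:XopBI}). The second intertwining relation $H\circ\mathcal{B}_{\phi}=\mathcal{B}_{\phi}\circ H^{(1)}$ is the genuinely computational part: here I would expand both composites with the eight product rules of Lemma 2.1, collect the coefficients of $RTR$, $T$, $TR$, $R$ and $I$, and match them, invoking (\ref{eq:condition b}) together with the symmetries $\chi(-x)=-\chi(x)$, $\tilde{\chi}(-x-1)=\tilde{\chi}(x)$ and the auxiliary identities (\ref{eq:2.3})--(\ref{eq:2.6}). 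This coefficient matching, essentially the mirror image of the calculation behind Proposition 2.2 but now with the nonzero $\gamma^{(1)}$ threaded through, is where I expect the main bookkeeping obstacle to lie.

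Finally, the factorizations (\ref{eq:BIfactorization1})--(\ref{eq:BIfactorization2}) require no fresh operator-level work. The general identities (\ref{eq:factorization1})--(\ref{eq:factorization2}) already express $\mathcal{B}_{\phi}\circ\mathcal{F}_{\phi}$ and $\mathcal{F}_{\phi}\circ\mathcal{B}_{\phi}$ as $(H-\mu)\circ(H+\beta(x)+\beta(-x-1))$ and its $H^{(1)}$-analogue. Substituting the constant value $\beta(x)+\beta(-x-1)=-\beta$ from (\ref{eq:BIconsts}) turns the second factor into $H-\beta$, respectively $H^{(1)}-\beta$, which is exactly the claimed form. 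Assembling the verified conditions, the annihilation property, the two intertwining relations, and these specialized factorizations completes the proof.
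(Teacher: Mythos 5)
Your proposal is correct and takes essentially the same route as the paper: Theorem~\ref{thm:gDT} is presented there as a summary of the Section~2 machinery, with the first relation in (\ref{eq:gDT}) being (\ref{eq:single Inter}) from Proposition~2.2 (with $\gamma^{(1)}$ collapsed to a constant via (\ref{eq:BIconsts})), the operator $\mathcal{B}_{\phi}$ of (\ref{eq:opB}) introduced precisely so that the second relation holds, and the factorizations (\ref{eq:BIfactorization1}), (\ref{eq:BIfactorization2}) obtained by substituting the constants (\ref{eq:BIconsts}) into the general identities (\ref{eq:factorization1}), (\ref{eq:factorization2}). Your explicit verifications of $\mathcal{F}_{\phi}[\phi(x)]=0$ and of the conditions (\ref{eq:condition a}), (\ref{eq:condition b}) merely fill in details the paper asserts without computation.
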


\par
Later in Section 4 we will show that with a well selected seed solution $\phi(x)$ 
and the related decoupling coefficient $r(x)$, 
the 1-step Darboux transformed eigenfunctions $\{\hat{\mathcal{F}}_{\phi}[B_n(x)]\}$ 
($B_n(x)$ are the Bannai-Ito polynomials)  
possess the ``exceptional'' feature (gaps in their degree sequences) and are orthogonal with respect to 
a discrete measure on the exceptional Bannai-Ito grid. 

\par
As an immediate result of the intertwining relations (\ref{eq:gDT}), we have
$$
H\circ(\mathcal{B}_{\phi}\circ\mathcal{F}_{\phi})=(\mathcal{B}_{\phi}\circ\mathcal{F}_{\phi})\circ H,
$$
$$
(\mathcal{F}_{\phi}\circ\mathcal{B}_{\phi})\circ H^{(1)}=H^{(1)}\circ(\mathcal{F}_{\phi}\circ\mathcal{B}_{\phi}),
$$
which indicate that $H$ and $H^{(1)}$ are commutable with $(\mathcal{B}_{\phi}\circ\mathcal{F}_{\phi})$ 
and $(\mathcal{F}_{\phi}\circ\mathcal{B}_{\phi})$, respectively.
In a more generic setting, if we denote $H$ by $H^{(0)}$, and the exceptional operator obtained after $n$ 
steps of DT by $H^{(n)}$ as we did before, 
while the same notations adopting to the operators $\mathcal{F}^{(n)}$ and $\mathcal{B}^{(n)}$ with 
$\mathcal{F}^{(1)}=\mathcal{F}_{\phi}$, $\mathcal{B}^{(1)}=\mathcal{B}_{\phi}$, 
then we can give the intertwining relations with respect to the multiple-step Darboux transformed Bannai-Ito operators. 
\begin{corollary}
\label{thm:n-Inter}
The exceptional Bannai-Ito operator $H^{(n+1)}$ and the Bannai-Ito operator $H^{(0)}$ satisfy the following 
intertwining relations for $n=1,2,\ldots$,
\begin{equation}
\label{n-intertwining1}
(\mathcal{F}^{(n)}\circ\cdots\circ\mathcal{F}^{(1)})\circ H^{(0)}
=H^{(n+1)}\circ(\mathcal{F}^{(n)}\circ\cdots\circ\mathcal{F}^{(1)}),
\end{equation}
\begin{equation}
\label{n-intertwining2}
H^{(0)}\circ(\mathcal{B}^{(1)}\circ\cdots\circ\mathcal{B}^{(n)})
=(\mathcal{B}^{(1)}\circ\cdots\circ\mathcal{B}^{(n)})\circ H^{(n+1)}.
\end{equation}
\end{corollary}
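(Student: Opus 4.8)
The plan is to obtain both identities by a straightforward induction on $n$ that telescopes the single-step intertwining relations one level at a time; no new computation beyond what is already available should be required. The essential input is the lemma guaranteeing that, because the Bannai-Ito coefficients satisfy the conditions (\ref{eq:condition a}) and (\ref{eq:condition b}), the generalized Darboux transformation applies to every $H^{(k)}$ in exactly the same manner as to $H=H^{(0)}$. Consequently, at each level $k\ge 1$ the single-step relations
\begin{equation}
\mathcal{F}^{(k)}\circ H^{(k-1)}=H^{(k)}\circ\mathcal{F}^{(k)},\qquad
H^{(k-1)}\circ\mathcal{B}^{(k)}=\mathcal{B}^{(k)}\circ H^{(k)}
\end{equation}
hold; these are precisely the relations (\ref{eq:gDT}) of Theorem \ref{thm:gDT} with $H,\,H^{(1)},\,\mathcal{F}_{\phi},\,\mathcal{B}_{\phi}$ replaced by their level-$k$ counterparts $H^{(k-1)},\,H^{(k)},\,\mathcal{F}^{(k)},\,\mathcal{B}^{(k)}$.

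To prove (\ref{n-intertwining1}) I would start from $(\mathcal{F}^{(n)}\circ\cdots\circ\mathcal{F}^{(1)})\circ H^{(0)}$ and push the operator $H^{(0)}$ leftward across the factors one at a time. The innermost relation $\mathcal{F}^{(1)}\circ H^{(0)}=H^{(1)}\circ\mathcal{F}^{(1)}$ converts $H^{(0)}$ into $H^{(1)}$ and carries it past $\mathcal{F}^{(1)}$; applying $\mathcal{F}^{(2)}\circ H^{(1)}=H^{(2)}\circ\mathcal{F}^{(2)}$ raises the superscript once more, and continuing in this way the transformed operator emerges on the far left. By associativity of operator composition this telescoping is exactly the inductive step: assuming the statement for $n$, I would precompose the induction hypothesis with $\mathcal{F}^{(n+1)}$ on the left and invoke the single-step relation at the top level to absorb it. The relation (\ref{n-intertwining2}) is established symmetrically, now moving $H^{(0)}$ rightward through $\mathcal{B}^{(1)},\ldots,\mathcal{B}^{(n)}$ by means of $H^{(k-1)}\circ\mathcal{B}^{(k)}=\mathcal{B}^{(k)}\circ H^{(k)}$, whose superscript increases at each transposition.

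I anticipate no substantive obstacle: once the single-step relations are known to hold at every level, the corollary is a formal consequence of associativity and repeated substitution, and the two identities are mutually dual. The only point demanding care is the index bookkeeping---verifying at each substitution which level of intertwining relation is used and confirming that the superscript increments consistently---so that after processing all the factors the fully transformed Bannai-Ito operator carries the intended superscript in (\ref{n-intertwining1})--(\ref{n-intertwining2}). In particular I would check the terminal index of the telescope against the superscript $H^{(n+1)}$ stated in the corollary.
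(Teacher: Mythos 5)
Your proposal is correct and follows essentially the same route as the paper's own proof: the paper likewise invokes the level-$k$ single-step relations $\mathcal{F}^{(k)}\circ H^{(k-1)}=H^{(k)}\circ\mathcal{F}^{(k)}$ and $H^{(k-1)}\circ\mathcal{B}^{(k)}=\mathcal{B}^{(k)}\circ H^{(k)}$ (available at every level by Lemma 2.4, since the Bannai-Ito coefficients satisfy (\ref{eq:condition a}) and (\ref{eq:condition b})) and telescopes them by repeated substitution, which is precisely your induction. Concerning the terminal-index check you flagged: carrying out the telescope with $n$ factors yields $H^{(n)}$, not $H^{(n+1)}$ --- this matches Theorem \ref{thm:gDT}, where one factor produces $H^{(1)}$, and the paper's own proof, where $\mathcal{F}^{(2)}\circ\mathcal{F}^{(1)}$ produces $H^{(2)}$ --- so the superscript $n+1$ appearing in (\ref{n-intertwining1})--(\ref{n-intertwining2}) is an off-by-one in the paper's labeling of the corollary rather than a gap in your argument.
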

\begin{proof}
This corollary follows inductively from the construction of the multiple-step exceptional Bannai-Ito operators 
$H^{(n)}$ ($n=1,2,\ldots$). 
According to Theorem \ref{thm:gDT} we know that the exceptional Bannai-Ito operators $H^{(1)}$, $H^{(2)}$, $H^{(3)}$, $\ldots$ 
can be obtained through the following intertwining relations
\begin{eqnarray*}
\mathcal{F}^{(1)}\circ H^{(0)} = H^{(1)}\circ \mathcal{F}^{(1)}, \hspace{-2.4mm}& &\hspace{-2.4mm}
H^{(0)}\circ\mathcal{B}^{(1)}=\mathcal{B}^{(1)}\circ H^{(1)}, \\
\mathcal{F}^{(2)}\circ H^{(1)} = H^{(2)}\circ \mathcal{F}^{(2)}, \hspace{-2.4mm}& &\hspace{-2.4mm} 
H^{(1)}\circ\mathcal{B}^{(2)}=\mathcal{B}^{(2)}\circ H^{(2)}, \\
\mathcal{F}^{(3)}\circ H^{(2)} = H^{(3)}\circ \mathcal{F}^{(3)}, \hspace{-2.4mm}& &\hspace{-2.4mm} 
H^{(2)}\circ\mathcal{B}^{(3)}=\mathcal{B}^{(3)}\circ H^{(3)}, \\
\hspace{-2.4mm}& \vdots &\hspace{-2.4mm}
\end{eqnarray*}
By applying $\mathcal{F}^{(2)}$ on the left-hand side of the first equation 
and then rewriting the result using the third equation we have
$$
\mathcal{F}^{(2)}\circ \mathcal{F}^{(1)}\circ H^{(0)} = \mathcal{F}^{(2)}\circ H^{(1)}\circ \mathcal{F}^{(1)}
=H^{(2)}\circ \mathcal{F}^{(2)}\circ \mathcal{F}^{(1)}.
$$
Similarly, by applying $\mathcal{B}^{(2)}$ on the right-hand side of the second equation 
and then rewriting the result using the fourth equation we have 
$$
H^{(0)}\circ\mathcal{B}^{(1)}\circ\mathcal{B}^{(2)}=\mathcal{B}^{(1)}\circ H^{(1)}\circ\mathcal{B}^{(2)}
=\mathcal{B}^{(1)}\circ \mathcal{B}^{(2)}\circ H^{(2)}.
$$
Repeating this procedure finally one will arrive at (\ref{n-intertwining1}) and (\ref{n-intertwining2}).
\end{proof}

\par
Again, we can deduce
$$
H^{(0)}\circ(\mathcal{B}^{(1)}\circ\cdots\circ\mathcal{B}^{(n)})\circ(\mathcal{F}^{(n)}\circ\cdots\circ\mathcal{F}^{(1)})=
(\mathcal{B}^{(1)}\circ\cdots\circ\mathcal{B}^{(n)})\circ(\mathcal{F}^{(n)}\circ\cdots\circ\mathcal{F}^{(1)})\circ H^{(0)},
$$
$$
(\mathcal{F}^{(n)}\circ\cdots\circ\mathcal{F}^{(1)})\circ(\mathcal{B}^{(1)}\circ\cdots\circ\mathcal{B}^{(n)})\circ H^{(n+1)}=
H^{(n+1)}\circ(\mathcal{F}^{(n)}\circ\cdots\circ\mathcal{F}^{(1)})\circ(\mathcal{B}^{(1)}\circ\cdots\circ\mathcal{B}^{(n)}),
$$
which indicate that $H$ and $H^{(n+1)}$ are commutable with 
$(\mathcal{B}^{(1)}\circ\cdots\circ\mathcal{B}^{(n)})\circ(\mathcal{F}^{(n)}\circ\cdots\circ\mathcal{F}^{(1)})$ 
and 
$(\mathcal{F}^{(n)}\circ\cdots\circ\mathcal{F}^{(1)})\circ(\mathcal{B}^{(1)}\circ\cdots\circ\mathcal{B}^{(n)})$, respectively.

\subsection{Determinant expression of multiple-step exceptional eigenfunctions}
\par
From the intertwining relations in Corollary \ref{thm:n-Inter} one knows that the eigenfunctions of the $n$-step 
exceptional Bannai-Ito operator $H^{(n)}$ can be given by 
\begin{equation}
\label{eq:n-step eigenfunction}
\phi_{m}^{(n)}(x)= \mathcal{F}^{(n-1)}\circ\cdots\circ\mathcal{F}^{(1)}[\phi^{(1)}_{m}(x)] \quad
(n\geq 1),
\end{equation}
where $\phi^{(1)}_{m}(x)$ is an eigenfunction of $H^{(0)}$ (or $H$, the original Bannai-Ito operator)
with eigenvalue $\mu_m$. 
Please do not mistake $\phi_{m}^{(k)}(x)$ and $\phi_{n}^{(k)}(x)$ by the 
eigenfunctions of order $m$ and order $n$, 
here we use the subscripts $m$, $n$ only to indicate they are different.
Recall that
\begin{equation}
\label{eq:n-step opF}
\mathcal{F}^{(n)}=\frac{1}{\chi^{(n)}(x)}(R-I)+\frac{1}{\tilde{\chi}^{(n)}(x)}(TR+I)\beta^{(n-1)}(-x-1),
\end{equation}
where $\beta^{(n)}(x)$ is given by (\ref{eq:coen-DT a,b}), and
\begin{equation}
\label{eq:n-step opX}
\hspace{-15mm}
\chi^{(n)}(x)
:=\chi^{(n)}_{n}(x)
=(I-R)[\phi^{(n)}_{n}(x)],
\end{equation}
\begin{equation}
\label{eq:n-step opXX}
\hspace{6.77mm}
\tilde{\chi}^{(n)}(x)
:=\tilde{\chi}^{(n)}_{n}(x)
=(I+TR)[\beta^{(n-1)}(-x-1)\phi^{(n)}_{n}(x)].
\end{equation}
Namely, the left-hand sides of (\ref{def: X}) and (\ref{def: XX}) are represented by $\chi^{(1)}(x)$ and $\tilde{\chi}^{(1)}(x)$, 
$\phi^{(n)}_{n}(x)$ is the $n$th-step seed solution: 
\begin{equation}
\label{eq:n-step seed}
H^{(n-1)}[\phi^{(n)}_{n}(x)]=\mu_n\phi^{(n)}_{n}(x), \hspace{2mm}
\phi^{(n)}_{n}(x)=\mathcal{F}^{(n-1)}[\phi^{(n-1)}_n(x)].
\end{equation}
For the simplicity of calculation we introduce the following notations 
(whose special cases are (\ref{eq:n-step opX}) and (\ref{eq:n-step opXX}))
\begin{equation}
\label{eq:n-step opX, opXX}
\chi^{(n)}_{m}(x)
=(I-R)[\phi^{(n)}_{m}(x)],
\hspace{2mm}
\tilde{\chi}^{(n)}_{m}(x)
=(I+TR)[\beta^{(n-1)}(-x-1)\phi^{(n)}_{m}(x)].
\end{equation}
It then follows from the definition that, for $n\geq 2$ we have
\begin{equation}
\label{eq:n-step opX formula}
\chi^{(n)}_{m}(x)
=\frac{\tilde{\chi}^{(n-1)}_{m}(x)}{\tilde{\chi}^{(n-1)}(x)}-\frac{\tilde{\chi}^{(n-1)}_{m}(-x)}{\tilde{\chi}^{(n-1)}(-x)},
\end{equation}
and
\begin{equation}
\label{eq:n-step opXX formula}
\tilde{\chi}^{(n)}_{m}(x)
=(\mu_{m}-\mu_{n-1})\frac{\tilde{\chi}^{(n-1)}_{m}(x)}{\tilde{\chi}^{(n-1)}(x)},
\end{equation}
where the expression 
\begin{equation}
\label{eq:n-step phi}
\phi_{m}^{(n)}(x)= \mathcal{F}^{(n-1)}[\phi^{(n-1)}_{m}(x)]
=\frac{\tilde{\chi}^{(n-1)}_{m}(x)}{\tilde{\chi}^{(n-1)}(x)}-\frac{\chi^{(n-1)}_{m}(x)}{\chi^{(n-1)}(x)}
\end{equation}
and the eigenvalue equation of $\phi_{m}^{(n-1)}(x)$ (specifically, the $(n-1)$-step version of (\ref{eq:2.5})) 
have been used for deriving (\ref{eq:n-step opX formula}) and (\ref{eq:n-step opXX formula}). 

\par
As it has been shown in \cite{ECH, EML, EHJ, EH, MDBT, MIOP}, 
multiple-step exceptional orthogonal polynomials can be expressed in Wronskian determinants 
whose entries are the classical orthogonal polynomials and their derivatives. 
Similarly, in the exceptional Bannai-Ito case, 
for example, the eigenfunctions of the 1-step exceptional Bannai-Ito operator $H^{(1)}$ are the following determinant. 
$$
\phi^{(2)}_{m}(x)=
\frac{1}{\tilde{\chi}^{(1)}(x)\chi^{(1)}(x)}
\begin{vmatrix}
\tilde{\chi}^{(1)}_{m}(x) & \chi^{(1)}_{m}(x) \\
\tilde{\chi}^{(1)}(x) & \chi^{(1)}(x)
\end{vmatrix}
$$
For the eigenfunctions of the $n$-step exceptional Bannai-Ito operator $H^{(n)}$ ($n\geq 2$), 
we show in the next theorem that they can always be expressed in a $3\times 3$ determinant.
\begin{theorem}
\label{thm:n-DT}
The eigenfunctions of the $n$-th step exceptional Bannai-Ito operator $H^{(n)}$ ($n\geq 2$) can be expressed as 
the $3\times 3$ determinant 
\begin{equation}
\label{eq:n-step eigenfunction}
\phi_{m}^{(n+1)}(x)
=\prod^{n-2}_{j=1}\frac{(\mu_{m}-\mu_{j})}{(\mu_{n}-\mu_{j})}
\begin{vmatrix}
\mu_{m}\tilde{\chi}^{(1)}_{m}(x) & \tilde{\chi}^{(1)}_{m}(x)  & \tilde{\chi}^{(1)}_{m}(-x) \\
\mu_{n}\tilde{\chi}^{(1)}_{n}(x)   & \tilde{\chi}^{(1)}_{n}(x)   & \tilde{\chi}^{(1)}_{n}(-x)  \\
\mu_{n-1}\tilde{\chi}^{(1)}_{n-1}(x)  & \tilde{\chi}^{(1)}_{n-1}(x) & \tilde{\chi}^{(1)}_{n-1}(-x) 
\end{vmatrix}
\end{equation}
$$\hspace{10mm}
\cdot
\bigg[(\mu_{n}-\mu_{n-1})\tilde{\chi}^{(1)}_{n}(x)
\begin{vmatrix}
\tilde{\chi}^{(1)}_{n}(x)   & \tilde{\chi}^{(1)}_{n}(-x)  \\
\tilde{\chi}^{(1)}_{n-1}(x) & \tilde{\chi}^{(1)}_{n-1}(-x)
\end{vmatrix}
\bigg]^{-1}
$$
where $\mu_n$ is the eigenvalue of $\phi^{(1)}_{n}(x): H[\phi^{(1)}_{n}(x)]=\mu_n\phi^{(1)}_{n}(x)$, and 
$$
\tilde{\chi}^{(1)}_{n}(x)=\beta(-x-1)\phi^{(1)}_{n}(x)+\beta(x)\phi^{(1)}_{n}(-x-1).
$$

\end{theorem}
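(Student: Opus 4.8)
The plan is to collapse every level-$n$ quantity entering $\phi_m^{(n+1)}(x)$ down to the level-one functions $\tilde\chi_k^{(1)}(\pm x)$, and then to recognise the outcome as a cofactor expansion of the stated $3\times3$ determinant. Taking (\ref{eq:n-step phi}) with its superscript raised by one, and recalling the conventions $\tilde\chi^{(n)}(x)=\tilde\chi_n^{(n)}(x)$ and $\chi^{(n)}(x)=\chi_n^{(n)}(x)$, the object to evaluate is
$$\phi_m^{(n+1)}(x)=\frac{\tilde\chi_m^{(n)}(x)}{\tilde\chi^{(n)}(x)}-\frac{\chi_m^{(n)}(x)}{\chi^{(n)}(x)},$$
so the whole proof reduces to computing these two ratios.

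First I would iterate the one-step recursion (\ref{eq:n-step opXX formula}) to obtain the telescoped product
$$\tilde\chi_m^{(n)}(x)=\Bigg(\prod_{j=1}^{n-1}(\mu_m-\mu_j)\Bigg)\frac{\tilde\chi_m^{(1)}(x)}{\prod_{j=1}^{n-1}\tilde\chi^{(j)}(x)},$$
established by a short induction on $n$: each step contributes the factor $\mu_m-\mu_{n-1}$ to the numerator product and the factor $\tilde\chi^{(n-1)}(x)$ to the denominator product. The two decisive features are that this is a functional identity, so the same formula is available with $x\mapsto-x$, and that the bulky denominator $\prod_{j}\tilde\chi^{(j)}$ depends on the label $m$ only through the single factor $\tilde\chi_m^{(1)}$; consequently this denominator cancels in any ratio of two such quantities.

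Applying the telescoped formula to the first ratio (with $\tilde\chi^{(n)}=\tilde\chi_n^{(n)}$) cancels the bulky denominator and gives $\tilde\chi_m^{(n)}/\tilde\chi^{(n)}=\big[\prod_{j=1}^{n-1}(\mu_m-\mu_j)/\prod_{j=1}^{n-1}(\mu_n-\mu_j)\big]\,\tilde\chi_m^{(1)}(x)/\tilde\chi_n^{(1)}(x)$. For the second ratio I would substitute the telescoped formula at level $n-1$, evaluated at both $x$ and $-x$, into (\ref{eq:n-step opX formula}); the same cancellation turns $\chi_m^{(n)}(x)$ into a scalar times a $2\times2$ determinant,
$$\chi_m^{(n)}(x)=\frac{\prod_{j=1}^{n-2}(\mu_m-\mu_j)}{\prod_{j=1}^{n-2}(\mu_{n-1}-\mu_j)}\cdot\frac{1}{\tilde\chi_{n-1}^{(1)}(x)\,\tilde\chi_{n-1}^{(1)}(-x)}\begin{vmatrix}\tilde\chi_m^{(1)}(x)&\tilde\chi_m^{(1)}(-x)\\ \tilde\chi_{n-1}^{(1)}(x)&\tilde\chi_{n-1}^{(1)}(-x)\end{vmatrix},$$
and the same with $m$ replaced by $n$ for $\chi^{(n)}=\chi_n^{(n)}$. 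Dividing, the scalar prefactors simplify and the common factor $\tilde\chi_{n-1}^{(1)}(x)\tilde\chi_{n-1}^{(1)}(-x)$ drops out.

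Finally I would put $\phi_m^{(n+1)}(x)$ over the common denominator $(\mu_n-\mu_{n-1})\,\tilde\chi_n^{(1)}(x)\,M$, where $M$ is the $2\times2$ determinant displayed above with $m$ replaced by $n$ (i.e. with rows indexed by $n$ and $n-1$); the scalar $\prod_{j=1}^{n-2}(\mu_m-\mu_j)/(\mu_n-\mu_j)$ then factors out in front, matching the stated prefactor and bracketed denominator exactly. The hard part is the last recognition: one must verify that the remaining numerator — a three-term combination of the products $\tilde\chi_m^{(1)}\tilde\chi_n^{(1)}\tilde\chi_{n-1}^{(1)}$ at mixed arguments — is precisely the $3\times3$ determinant in the statement. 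I expect to settle this by expanding that determinant along its first column and matching monomials: the coefficients of $\tilde\chi_m^{(1)}(x)\tilde\chi_n^{(1)}(x)\tilde\chi_{n-1}^{(1)}(-x)$, of $\tilde\chi_m^{(1)}(x)\tilde\chi_n^{(1)}(-x)\tilde\chi_{n-1}^{(1)}(x)$, and of $\tilde\chi_m^{(1)}(-x)\tilde\chi_n^{(1)}(x)\tilde\chi_{n-1}^{(1)}(x)$ come out to be $\mu_m-\mu_n$, $\mu_{n-1}-\mu_m$ and $\mu_n-\mu_{n-1}$, which are exactly the coefficients produced by the difference of the two ratios. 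Once these three agree the identity follows, and everything else is merely the bookkeeping of the telescoped products.
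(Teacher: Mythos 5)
Your proposal is correct and follows essentially the same route as the paper's own proof: iterate the recursions (\ref{eq:n-step opX formula}) and (\ref{eq:n-step opXX formula}) to express $\tilde{\chi}^{(n)}_{m}$ and $\chi^{(n)}_{m}$ through the level-one functions $\tilde{\chi}^{(1)}_{k}(\pm x)$, substitute into (\ref{eq:n-step phi}), factor out $\prod_{j=1}^{n-2}(\mu_m-\mu_j)/(\mu_n-\mu_j)$, and identify the remaining bracket with the ratio of the $3\times 3$ determinant to $(\mu_n-\mu_{n-1})\tilde{\chi}^{(1)}_{n}(x)$ times the $2\times 2$ determinant via a first-column cofactor expansion. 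The only cosmetic difference is that you keep the telescoped denominator $\prod_j\tilde{\chi}^{(j)}(x)$ unreduced before cancelling, and you spell out the coefficient matching ($\mu_m-\mu_n$, $\mu_{n-1}-\mu_m$, $\mu_n-\mu_{n-1}$) that the paper leaves as an assertion; both of these checks are correct.
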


\begin{proof}
First, it is easily seen from (\ref{eq:n-step opX formula}) and (\ref{eq:n-step opXX formula}) that, for $n \geq 2$ we have 
$$
\tilde{\chi}^{(n)}_{m}(x)
=\frac{\prod\limits^{n-1}_{j=1}(\mu_{m}-\mu_{j})}{\prod\limits^{n-2}_{k=1}(\mu_{n-1}-\mu_{k})}
\frac{\tilde{\chi}^{(1)}_{m}(x)}{\tilde{\chi}^{(1)}_{n-1}(x)}, \quad
\chi^{(n)}_{m}(x)
=\frac{\prod\limits^{n-2}_{j=1}(\mu_{m}-\mu_{j})}{\prod\limits^{n-2}_{k=1}(\mu_{n-1}-\mu_{k})}
\bigg(\frac{\tilde{\chi}^{(1)}_{m}(x)}{\tilde{\chi}^{(1)}_{n-1}(x)}-\frac{\tilde{\chi}^{(1)}_{m}(-x)}{\tilde{\chi}^{(1)}_{n-1}(-x)}\bigg).
$$
Then from (\ref{eq:n-step phi}) it follows that 
$$\hspace{-66mm}
\phi_m^{(n+1)}(x) 
=\frac{\tilde{\chi}^{(n)}_{m}(x)}{\tilde{\chi}^{(n)}(x)}-\frac{\chi^{(n)}_{m}(x)}{\chi^{(n)}(x)}
$$
$$\hspace{24mm}
=\prod^{n-1}_{j=1}\frac{(\mu_{m}-\mu_{j})}{(\mu_{n}-\mu_{j})}
\frac{\tilde{\chi}^{(1)}_{m}(x)}{\tilde{\chi}^{(1)}_{n}(x)}
-\prod^{n-2}_{j=1}\frac{(\mu_{m}-\mu_{j})}{(\mu_{n}-\mu_{j})}
\frac{\tilde{\chi}^{(1)}_{m}(x)\tilde{\chi}^{(1)}_{n-1}(-x)-\tilde{\chi}^{(1)}_{m}(-x)\tilde{\chi}^{(1)}_{n-1}(x)}
{\tilde{\chi}^{(1)}_{n}(x)\tilde{\chi}^{(1)}_{n-1}(-x)-\tilde{\chi}^{(1)}_{n}(-x)\tilde{\chi}^{(1)}_{n-1}(x)}
$$
$$\hspace{24mm}
=\prod^{n-2}_{j=1}\frac{(\mu_{m}-\mu_{j})}{(\mu_{n}-\mu_{j})}
\bigg[
\frac{(\mu_{m}-\mu_{n-1})\tilde{\chi}^{(1)}_{m}(x)}{(\mu_{n}-\mu_{n-1})\tilde{\chi}^{(1)}_{n}(x)}
-\frac{\tilde{\chi}^{(1)}_{m}(x)\tilde{\chi}^{(1)}_{n-1}(-x)-\tilde{\chi}^{(1)}_{m}(-x)\tilde{\chi}^{(1)}_{n-1}(x)}
{\tilde{\chi}^{(1)}_{n}(x)\tilde{\chi}^{(1)}_{n-1}(-x)-\tilde{\chi}^{(1)}_{n}(-x)\tilde{\chi}^{(1)}_{n-1}(x)}
\bigg].
$$
Finally, the formula in the square brackets can be rewritten into 
 $$
\begin{vmatrix}
\mu_{m}\tilde{\chi}^{(1)}_{m}(x) & \tilde{\chi}^{(1)}_{m}(x)  & \tilde{\chi}^{(1)}_{m}(-x) \\
\mu_{n}\tilde{\chi}^{(1)}_{n}(x)   & \tilde{\chi}^{(1)}_{n}(x)   & \tilde{\chi}^{(1)}_{n}(-x)  \\
\mu_{n-1}\tilde{\chi}^{(1)}_{n-1}(x)  & \tilde{\chi}^{(1)}_{n-1}(x) & \tilde{\chi}^{(1)}_{n-1}(-x) 
\end{vmatrix}\cdot
\bigg[(\mu_{n}-\mu_{n-1})\tilde{\chi}^{(1)}_{n}(x)
\begin{vmatrix}
\tilde{\chi}^{(1)}_{n}(x)   & \tilde{\chi}^{(1)}_{n}(-x)  \\
\tilde{\chi}^{(1)}_{n-1}(x) & \tilde{\chi}^{(1)}_{n-1}(-x)
\end{vmatrix}
\bigg]^{-1}.
$$
\end{proof}

\par
Let us observe the right-hand side of (\ref{eq:n-step eigenfunction}), 
$\phi^{(n+1)}_m(x)$ vanishes if $m=i$ for $i=1,2,\ldots, n$. 
The cases $m=n$ and $m=n-1$ are due to the $3\times 3$ determinant part, 
while the cases $m\in\{1,2,\ldots, n-2\}$ are due to the prefactor $\prod^{n-2}_{j=1}(\mu_{m}-\mu_{j})$. 
In this way, there are in total $n$ eigenfunctions been deleted from the eigenfunction sequence of 
the $n$-step exceptional Bannai-Ito operator $H^{(n)}$.

\section{Quasi-polynomial eigenfunctions of Bannai-Ito operator}
\par
In this section we consider a special class of eigenfunctions of the Bannai-Ito operator, 
which are called quasi-polynomial eigenfunctions. 
A quasi-polynomial eigenfunction is the product of a gauge factor and a polynomial part:
$$
H[\xi(x)p(x)]=\lambda\xi(x)p(x),
$$
where $\xi(x)$ is a function in $x$ and $p(x)$ is a polynomial in $x$, $\lambda$ is the corresponding eigenvalue. The Bannai-Ito operator may have several sequences of quasi-polynomial eigenfunctions. 
From these quasi-polynomial eigenfunctions, in the next section, we will choose the seed solutions of the DT. 
This plays an important role in the construction of exceptional Bannai-Ito polynomials.

\par
From the definition of the quasi-polynomial eigenfunction, we will derive all possible gauge factors $\xi(x)$.
First, let us consider the conjugated operator $\tilde{H}=\xi^{-1}H\xi$:
\begin{eqnarray*}
\tilde{H}
\hspace{-2.4mm}& = &\hspace{-2.4mm}
\frac{1}{\xi(x)}\bigg[\alpha(x)(R-I)\xi(x)+\beta(x)(TR-I)\xi(x)\bigg] \\
\hspace{-2.4mm}& = &\hspace{-2.4mm}
\alpha(x)\frac{\xi(-x)}{\xi(x)}(R-I)+
\beta(x)\frac{\xi(-x-1)}{\xi(x)}(TR-I)+\alpha(x)\bigg[\frac{\xi(-x)}{\xi(x)}-1\bigg]+\beta(x)\bigg[\frac{\xi(-x-1)}{\xi(x)}-1\bigg].
\end{eqnarray*}
The operator $\tilde{H}$ has polynomial eigenfunctions if and only if 
$\tilde{H}$ is also a Bannai-Ito operator, i.e. there exist real numbers $\rho'_1$, $\rho'_2$, $r'_1$, $r'_2$ 
and $\rho''_1$, $\rho''_2$, $r''_1$, $r''_2$
such that both $(3.1)$ and one of $(3.2)$, $(3.3)$ are satisfied
\begin{equation}
\alpha(x)\bigg[\frac{\xi(-x)}{\xi(x)}-1\bigg]+\beta(x)\bigg[\frac{\xi(-x-1)}{\xi(x)}-1\bigg] 
=const.
\end{equation}
\begin{equation}
\alpha(x)\frac{\xi(-x)}{\xi(x)}=\frac{(x-\rho'_1)(x-\rho'_2)}{-2x}, \hspace{2mm}
\beta(x)\frac{\xi(-x-1)}{\xi(x)}=\frac{(x-r'_1+\frac{1}{2})(x-r'_2+\frac{1}{2})}{2x+1},
\end{equation}
\begin{equation}
\alpha(x)\frac{\xi(-x)}{\xi(x)}=\frac{(x-\rho''_1)(x-\rho''_2)}{2x}, \hspace{2mm}
\beta(x)\frac{\xi(-x-1)}{\xi(x)}=-\frac{(x-r''_1+\frac{1}{2})(x-r''_2+\frac{1}{2})}{2x+1}.
\end{equation}

\par
Further calculations lead to the following lemma.
\begin{lemma}
Let $H=\alpha(x)(R-I)+\beta(x)(TR-I)$ be a Bannai-Ito operator, then the conjugated operator 
$\tilde{H}=\xi^{-1}H\xi$ has polynomial eigenfunctions if and only if 
$\xi(x)$ satisfies
\begin{equation}
\frac{\xi(-x)}{\xi(x)}=\frac{(x-\rho'_1)(x-\rho'_2)}{(x-\rho_1)(x-\rho_2)}, \quad
\frac{\xi(-x-1)}{\xi(x)}=\frac{(x-r'_1+\frac{1}{2})(x-r'_2+\frac{1}{2})}{(x-r_1+\frac{1}{2})(x-r_2+\frac{1}{2})},
\end{equation}
where $\rho'_1\rho'_2=\rho_1\rho_2$ and $r'_1r'_2=r_1r_2$; 
or $\xi(x)$ satisfies
\begin{equation}
\frac{\xi(-x)}{\xi(x)}=-\frac{(x-\rho''_1)(x-\rho''_2)}{(x-\rho_1)(x-\rho_2)}, \quad
\frac{\xi(-x-1)}{\xi(x)}=-\frac{(x-r''_1+\frac{1}{2})(x-r''_2+\frac{1}{2})}{(x-r_1+\frac{1}{2})(x-r_2+\frac{1}{2})},
\end{equation}
where $\rho''_1\rho''_2=-\rho_1\rho_2$ and $r''_1r''_2=-r_1r_2$.
\end{lemma}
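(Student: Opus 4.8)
The plan is to translate the coefficient conditions $(3.1)$, $(3.2)$, $(3.3)$ --- to which the paragraph preceding the lemma has already reduced the statement ``$\tilde H$ has polynomial eigenfunctions'' --- into the stated relations for the ratios $\xi(-x)/\xi(x)$ and $\xi(-x-1)/\xi(x)$. The work is then entirely algebraic: read off the ratio identities from the coefficient forms, and convert the single scalar condition $(3.1)$ into the product constraints on the new parameters.

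First I would dispose of $(3.2)$. Dividing its first relation by $\alpha(x)=\frac{(x-\rho_1)(x-\rho_2)}{-2x}$ and its second by $\beta(x)=\frac{(x-r_1+1/2)(x-r_2+1/2)}{2x+1}$, the factors $-2x$ and $2x+1$ cancel and I obtain at once
\[
\frac{\xi(-x)}{\xi(x)}=\frac{(x-\rho'_1)(x-\rho'_2)}{(x-\rho_1)(x-\rho_2)},\qquad
\frac{\xi(-x-1)}{\xi(x)}=\frac{(x-r'_1+\tfrac12)(x-r'_2+\tfrac12)}{(x-r_1+\tfrac12)(x-r_2+\tfrac12)},
\]
which are the ratio identities of the first alternative; the same division applied to $(3.3)$, where each coefficient carries an extra overall sign, produces those of the second alternative. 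This step is immediate and reversible.

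The substantive step is to show that, under $(3.2)$, the inhomogeneous condition $(3.1)$ is equivalent to $\rho'_1\rho'_2=\rho_1\rho_2$ and $r'_1r'_2=r_1r_2$. Substituting the coefficient forms of $(3.2)$, the left-hand side of $(3.1)$ becomes
\[
\frac{(x-\rho'_1)(x-\rho'_2)-(x-\rho_1)(x-\rho_2)}{-2x}
+\frac{(x-r'_1+\tfrac12)(x-r'_2+\tfrac12)-(x-r_1+\tfrac12)(x-r_2+\tfrac12)}{2x+1}.
\]
Each numerator is a polynomial of degree at most one, so this is a rational function whose only candidate poles are the simple poles at $x=0$ and $x=-\tfrac12$; hence it is constant precisely when both residues vanish. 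A direct computation gives a residue proportional to $\rho'_1\rho'_2-\rho_1\rho_2$ at $x=0$ and one proportional to $r'_1r'_2-r_1r_2$ at $x=-\tfrac12$, so $(3.1)$ holds if and only if the two product constraints hold, which establishes the first alternative. Repeating the calculation with the sign-reversed forms of $(3.3)$ yields $\rho''_1\rho''_2=-\rho_1\rho_2$ and $r''_1r''_2=-r_1r_2$, hence the second alternative; there the two fractions each acquire a linear leading part (their numerators now being quadratic), and these cancel identically --- which is precisely what forbids hybridizing the two sign conventions and confines the coefficients to $(3.2)$ or $(3.3)$.

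The one place demanding care is the residue at $x=-\tfrac12$, since the $\pm\tfrac12$ shifts in the arguments of the second numerator must be tracked (the residue at $x=0$ is unshifted and reads off directly as a multiple of $\rho'_1\rho'_2-\rho_1\rho_2$). Writing $a=r_1-\tfrac12$, $b=r_2-\tfrac12$ and the primed analogues, the second numerator becomes $N(x)=(x-a')(x-b')-(x-a)(x-b)$, and $N(-\tfrac12)=\tfrac12(a'+b'-a-b)+(a'b'-ab)$; re-expanding in the $r$'s, the half-sum terms cancel and leave $N(-\tfrac12)=r'_1r'_2-r_1r_2$, so this residue vanishes exactly when $r'_1r'_2=r_1r_2$. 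Once both residue identities are checked, the equivalences run in both directions and the lemma follows.
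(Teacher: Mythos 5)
Your proposal is correct and follows essentially the same route as the paper: both derive the ratio identities by dividing (3.2)--(3.3) by $\alpha(x)$, $\beta(x)$, then substitute into (3.1) and force the singular parts at $x=0$ and $x=-\tfrac12$ to vanish, yielding the product constraints $\rho'_1\rho'_2=\rho_1\rho_2$, $r'_1r'_2=r_1r_2$ (resp.\ the sign-reversed ones). Your residue formulation is just the paper's explicit partial-fraction decomposition in different language, and your check $N(-\tfrac12)=r'_1r'_2-r_1r_2$ agrees with the paper's coefficient of $1/(2x+1)$.
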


\begin{proof}
It is easy to see that $(3.4)$ and $(3.5)$ follow from $(3.2)$ and $(3.3)$, respectively. 
Thus, $(3.1)$ can be rewritten into 
\begin{eqnarray*}
const.\hspace{-2.4mm}&=&\hspace{-2.4mm}
\frac{(x-\rho'_1)(x-\rho'_2)-(x-\rho_1)(x-\rho_2)}{-2x}
+\frac{(x-r'_1+\frac{1}{2})(x-r'_2+\frac{1}{2})-(x-r_1+\frac{1}{2})(x-r_2+\frac{1}{2})}{2x+1} \\
\hspace{-2.4mm}&=&\hspace{-2.4mm}
-\frac{\rho_1+\rho_2-\rho'_1-\rho'_2}{2}+\frac{r_1+r_2-r'_1-r'_2}{2}-\frac{\rho'_1\rho'_2-\rho_1\rho_2}{2x}
+\frac{r'_1r'_2-r_1r_2}{2x+1}
\end{eqnarray*}
which implies $\rho'_1\rho'_2-\rho_1\rho_2=0$ and $r'_1r'_2-r_1r_2=0$; 
or
\begin{eqnarray*}
const.\hspace{-2.4mm}&=&\hspace{-2.4mm}
\frac{(x-\rho''_1)(x-\rho''_2)+(x-\rho_1)(x-\rho_2)}{2x}
-\frac{(x-r''_1+\frac{1}{2})(x-r''_2+\frac{1}{2})+(x-r_1+\frac{1}{2})(x-r_2+\frac{1}{2})}{2x+1} \\
\hspace{-2.4mm}&=&\hspace{-2.4mm}
-\frac{\rho_1+\rho_2+\rho''_1+\rho''_2}{2}+\frac{r_1+r_2+r''_1+r''_2}{2}-\frac{1}{2}
+\frac{\rho''_1\rho''_2+\rho_1\rho_2}{2x}-\frac{r''_1r''_2+r_1r_2}{2x+1}
\end{eqnarray*}
which implies $\rho''_1\rho''_2+\rho_1\rho_2=0$ and $r''_1r''_2+r_1r_2=0$.
\end{proof}
It then follows that there are both $4$ sets of parameterizations ($\sigma_1$, $\sigma_2$, $\sigma_3$, $\sigma_4$) 
for $\rho'_1$, $\rho'_2$, $r'_1$, $r'_2$ 
and $\rho''_1$, $\rho''_2$, $r''_1$, $r''_2$ ($\sigma_5$, $\sigma_6$, $\sigma_7$, $\sigma_8$): 
$$
\sigma_1=\{\rho_1,\rho_2,r_1,r_2\}, \hspace{2mm}
\sigma_2=\{-\rho_1,-\rho_2,-r_1,-r_2\},
$$
$$
\sigma_3=\{-\rho_1,-\rho_2,r_1,r_2\}, \hspace{2mm}
\sigma_4=\{\rho_1,\rho_2,-r_1,-r_2\},
$$ 
$$
\sigma_5=\{-\rho_1,\rho_2,-r_1,r_2\}, \hspace{2mm}
\sigma_6=\{\rho_1,-\rho_2,r_1,-r_2\},
$$
$$
\sigma_7=\{\rho_1,-\rho_2,-r_1,r_2\}, \hspace{2mm}
\sigma_8=\{-\rho_1,\rho_2,r_1,-r_2\}.
$$

\begin{rmk}
One may ask why there are only 8 sets of parameterizations, for example, the condition $\rho'_1\rho'_2=\rho_1\rho_2$ 
leads to $\rho'_1=k\rho_1$ and $\rho'_2=\rho_2/k$ where $k$ can be any real number. 
We will show that the conditions (3.4) and (3.5) imply that only the choices $k=\pm 1$ are allowed. 
In fact, if we let $\rho'_1=k\rho_1$ and $\rho'_2=\rho_2/k$, then the first equation of (3.4) can be rewritten into 
\begin{equation}
\frac{\xi(-x)}{\xi(x)}=\frac{(x-k\rho_1)(x-\rho_2/k)}{(x-\rho_1)(x-\rho_2)},
\end{equation}
which becomes the next equation with $x$ replaced by $-x$
\begin{equation}
\frac{\xi(x)}{\xi(-x)}=\frac{(x+k\rho_1)(x+\rho_2/k)}{(x+\rho_1)(x+\rho_2)}.
\end{equation}
Then we have
$$
1=\frac{\xi(-x)}{\xi(x)}\cdot\frac{\xi(x)}{\xi(-x)}=
\frac{(x-k\rho_1)(x-\rho_2/k)}{(x-\rho_1)(x-\rho_2)}\cdot\frac{(x+k\rho_1)(x+\rho_2/k)}{(x+\rho_1)(x+\rho_2)}
=\frac{(x^2-k^2\rho^2_1)(x^2-\rho^2_2/k^{2})}{(x^2-\rho^2_1)(x^2-\rho^2_2)}.
$$
The above equation have 4 solutions: $k^2=1$ and $k^2=\rho^2_2/\rho^2_1$. 
However, the former two solutions lead to $(\rho'_1,\rho'_2)=(\rho_1, \rho_2)$ or $(-\rho_1,-\rho_2)$ 
while the latter two solutions lead to $(\rho'_1,\rho'_2)=(\rho_2, \rho_1)$ or $(-\rho_2,-\rho_1)$. 
One should notice that nothing changes if we exchange $\rho'_1$ with $\rho'_2$ ($\rho''_1$ with $\rho''_2$) 
or $r'_1$ with $r'_2$ ($r''_1$ with $r''_2$)  
due to the symmetry of the right-hand sides of $(3.2)$ ($(3.3)$). 
In fact, the solutions $k^2=\rho^2_2/\rho^2_1$ lead to the same gauge factors as the solutions $k^2=1$. 
Therefore, we drop the solutions $k^2=\rho^2_2/\rho^2_1$ and keep $k^2=1$. 
This reasoning can also be applied to the remaining cases. 
In this way we conclude that there are in total 8 cases of parameterizations.
These parameterizations correspond to the following 8 classes of quasi-polynomial eigenfunctions.
\end{rmk}

\begin{theorem}
The Bannai-Ito operator has 8 sequences of quasi-polynomial eigenfunctions: 
$\{\xi_d(x)p^{(d)}_m(x)\}_{m=0}^{\infty}$, $d\in\{1,2,\ldots,8\}$.  
The 8 gauge factors are 
\begin{eqnarray*}
\xi_1(x) &=& 1, \\
\xi_2(x) &=& \frac{\Gamma\left(1/2+r_{1}+x \right)\Gamma \left(1/2+r_{1}-x\right)\Gamma\left(1/2+r_{2}+x \right)\Gamma \left(1/2+r_{2}-x\right)}
{\Gamma \left( \rho_{1}-x \right) \Gamma \left( 1+\rho_{1}+x \right) \Gamma \left( \rho_{2}-x \right) \Gamma \left( 1+\rho_{2}+x \right)},\\
\xi_3(x) &=& \frac{1}{\Gamma \left( \rho_{1}-x \right) \Gamma \left( 1+\rho_{1}+x \right) \Gamma \left( \rho_{2}-x \right)\Gamma\left( 1+\rho_{2}+x \right)}, \\
\xi_4(x) &=& \Gamma\left(1/2+r_{1}+x \right)\Gamma\left(1/2+r_{1}-x \right) \Gamma \left(1/2+r_{2}+x\right)\Gamma \left(1/2+r_{2}-x \right), \\
\xi_5(x) &=& \frac{\Gamma \left( 1/2+r_{1}+x \right) \Gamma\left( 1/2+r_{1}-x\right)}
{\big[\Gamma \left(\rho_{1}-x \right)\Gamma\left(1+\rho_{1}+x\right)}, \\
\xi_6(x) &=& \frac{\Gamma\left(1/2+r_{2}+x\right)\Gamma\left(1/2+r_{2}-x\right)}
{\Gamma\left(\rho_{2}-x\right)\Gamma\left(1+\rho_{2}+x\right)}, \\
\xi_7(x) &=& \frac{\Gamma \left( 1/2+r_{1}+x \right) \Gamma\left( 1/2+r_{1}-x\right)}
{\Gamma \left(\rho_{2}-x \right)\Gamma\left(1+\rho_{2}+x\right)}, \\
\xi_8(x) &=& \frac{\Gamma\left(1/2+r_{2}+x\right)\Gamma\left(1/2+r_{2}-x\right)}
{\Gamma\left(\rho_{1}-x\right)\Gamma\left(1+\rho_{1}+x\right)}.
\end{eqnarray*}
And the polynomials $p^{(d)}_m(x)=B_n(x;\sigma_{d})$ for $m\in\{0,1,2,\ldots\}$, $d\in\{1,2,\ldots,8\}$.  
Moreover, the eigenvalues of the quasi-polynomial eigenfunctions $\{\xi_d(x)p^{(d)}_m(x)\}$ are 
\vspace{-2mm}
\begin{empheq}[left={\mu_{\bf d}=\mu_{d,m}=\empheqlbrace}]{align}
\lambda_{m}(\sigma_d)+C_d,\hspace{2mm} & \quad \text{ if } d\in\{1,2,3,4\},  \\
-\lambda_{m}(\sigma_d)+C_d, & \quad \text{ if } d\in\{5,6,7,8\}, 
\end{empheq}
where $\lambda_{m}(\sigma_d)$ is the eigenvalue of the Bannai-Ito polynomial $B_m(x)$ (refers to (1.3), (1.4)) 
with parameters given by $\sigma_d$, 
and the definition of the constant $C_d$ is 
\begin{empheq}[left={C_d=\empheqlbrace}]{align}
-\frac{\rho_1+\rho_2-\rho'_1-\rho'_2}{2}+\frac{r_1+r_2-r'_1-r'_2}{2}, \hspace{7.8mm} & \hspace{2mm}
\text{ if } \{\rho'_1,\rho'_2,r'_1,r'_2\}=\sigma_d \text{ and } d\in\{1,2,3,4\},  \nonumber \\
-\frac{\rho_1+\rho_2+\rho''_1+\rho''_2}{2}+\frac{r_1+r_2+r''_1+r''_2-1}{2}, & \hspace{2mm} 
\text{ if } \{\rho''_1,\rho''_2,r''_1,r''_2\}=\sigma_d \text{ and } d\in\{5,6,7,8\}.  \nonumber
\end{empheq}
\end{theorem}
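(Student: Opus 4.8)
The plan is to build on Lemma 3.1 and Remark 3.2, which together already pin down the admissible data: the conjugated operator $\tilde{H}=\xi^{-1}H\xi$ becomes a Bannai-Ito operator shifted by a constant precisely when $\xi$ realizes one of the eight parameterizations $\sigma_1,\ldots,\sigma_8$, and no others are possible. Since the polynomial eigenfunctions of a Bannai-Ito operator are its Bannai-Ito polynomials, conjugating back by $\xi$ converts these into the quasi-polynomial eigenfunctions $\xi_d(x)p_m^{(d)}(x)$ of $H$. Thus the theorem reduces to three tasks: exhibit a gauge factor $\xi_d$ solving the functional equations for each $\sigma_d$, identify the resulting $\tilde{H}$ explicitly, and read off the eigenvalue.

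First I would solve, for each $d$, the two functional equations obtained from (3.4) (for $d\in\{1,2,3,4\}$) or (3.5) (for $d\in\{5,6,7,8\}$) after substituting the parameters of $\sigma_d$. The essential tool is the recurrence $\Gamma(z+1)=z\Gamma(z)$: a ratio such as $\Gamma(1+\rho_1+x)/\Gamma(\rho_1+x)=\rho_1+x$ turns products of shifted Gamma functions into the rational prefactors on the right-hand sides of (3.4) and (3.5). For example, with $\sigma_3=\{-\rho_1,-\rho_2,r_1,r_2\}$ the $r$-ratio is forced to be $1$ while the $\rho$-ratio must equal $(x+\rho_1)(x+\rho_2)/[(x-\rho_1)(x-\rho_2)]$, and a direct computation confirms that the stated $\xi_3(x)$ reproduces exactly these two ratios. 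I would verify each of the eight $\xi_d$ in this manner; the computation is routine but must be carried out case by case. Note that the gauge factor is only determined up to a factor that is both even and $1$-periodic, so these $\xi_d$ are natural representatives rather than unique solutions.

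Next I would identify $\tilde{H}$. For $d\in\{1,2,3,4\}$ the conjugated coefficients $\alpha(x)\xi(-x)/\xi(x)$ and $\beta(x)\xi(-x-1)/\xi(x)$ match the standard Bannai-Ito form (3.2) with parameters $\sigma_d$, so $\tilde{H}-C_d$ equals the Bannai-Ito operator $H_{\sigma_d}$, where $C_d$ is the constant already isolated in the proof of Lemma 3.1. Hence $\tilde{H}[B_m(x;\sigma_d)]=(\lambda_m(\sigma_d)+C_d)B_m(x;\sigma_d)$, and conjugating back by $\xi_d$ yields $H[\xi_d p_m^{(d)}]=(\lambda_m(\sigma_d)+C_d)\,\xi_d p_m^{(d)}$. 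For $d\in\{5,6,7,8\}$ the coefficients instead match (3.3), whose overall sign is opposite to the standard form; consequently $\tilde{H}-C_d=-H_{\sigma_d}$, which flips the eigenvalue to $-\lambda_m(\sigma_d)$ and so produces the second branch of the eigenvalue formula. The eigenvalues $\lambda_m(\sigma_d)$ themselves are then read directly from the known spectrum (1.3)--(1.4) evaluated at $\sigma_d$.

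The hard part will be bookkeeping rather than conceptual difficulty: organizing the eight Gamma-function verifications uniformly and, above all, tracking the sign and constant conventions so that $C_d$ and the $\pm\lambda_m(\sigma_d)$ come out correctly in each branch. The sign reversal for $d\ge 5$ is the single genuinely delicate point, as it is responsible both for the $-\lambda_m(\sigma_d)$ in the eigenvalue and for the extra $-1/2$ in $C_d$, which traces back to the isolated constant $-\tfrac{1}{2}$ in the second displayed computation of the proof of Lemma 3.1. Once the conjugation $\tilde{H}=\xi_d^{-1}H\xi_d$ is matched to $\pm H_{\sigma_d}+C_d$ in every case, the full eigenvalue statement follows at once.
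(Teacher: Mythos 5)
Your proposal is correct, and it reaches the same endpoint as the paper (the identification $\tilde{H}=\pm H(\sigma_d)+C_d$ followed by reading off the eigenvalues from (1.3)--(1.4)), but it handles the central step --- establishing the gauge factors --- by a genuinely different and in fact cleaner mechanism. The paper \emph{constructs} each $\xi_d$ by first combining the two reflection relations into the shift equation $\xi(x+1)/\xi(x)=F(-x-1)$, solving it with Gamma-function ratios, and then confronting the resulting ambiguity: the solution is determined only up to a $1$-periodic factor $c(x)$, and the paper must observe that $c(x)$ \emph{cannot} be constant, choose it as a ratio of sines, and invoke Euler's reflection formula (with the attendant non-integrality assumptions on $x\pm\rho_i$, $x\pm r_i$) to convert the answer into the stated closed form. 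Your route \emph{verifies} the stated $\xi_d$ directly against the reflection relations (3.4)/(3.5) using only $\Gamma(z+1)=z\Gamma(z)$; this bypasses the periodic-factor subtlety and Euler's formula entirely, and your observation that the residual ambiguity is exactly an even, $1$-periodic factor is the correct abstract statement of what the paper resolves by hand. What the paper's construction buys is an explanation of where the $\xi_d$ come from if one does not know them in advance; what your verification buys is brevity and the elimination of the one delicate choice in the paper's argument. Both proofs share the same logical skeleton otherwise: Lemma 3.1 and Remark 3.2 for the classification into the eight $\sigma_d$, and the sign flip $\tilde{H}=-H(\sigma_d)+C_d$ in cases $d\in\{5,\ldots,8\}$ (with its companion constant $-\tfrac{1}{2}$), which you track correctly. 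The only caveat is that your proposal executes the verification for a single representative case ($d=3$) and asserts the remaining seven are routine; for a complete write-up all eight must be carried out, since the numerator/denominator patterns differ case by case.
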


\begin{proof}
Here we derive the above quasi-polynomial eigenfunctions using Lemma 3.1.
Notice that $\xi(-x)/\xi(-x-1)=F(x)$ leads to $\xi(x+1)/\xi(x)=F(-x-1)$ if one substitute $x$ by $-x-1$, 
hence it is convenient to construct the gauge factors with the help of the Gamma function. 

\par
In the first case we have $\xi_1(x+1)/\xi_1(x)=1$, which implies $\xi_1(x)$ is a constant.

\par
In the second case we have
$$
\frac{\xi_2(x+1)}{\xi_2(x)}=
\frac{(x-\rho_1+1)(x-\rho_2+1)(x+r_1+1/2)(x+r_2+1/2)}{(x+\rho_1+1)(x+\rho_2+1)(x-r_1+1/2)(x-r_2+1/2)}.
$$
Recall that $\Gamma(x+1)/\Gamma(x)=x$, we can express $\xi_2(x)$ in terms of the Gamma functions
$$
\xi_2(x)=c_2(x)
\frac{\Gamma(x-\rho_1+1)\Gamma(x-\rho_2+1)\Gamma(x+r_1+1/2)\Gamma(x+r_2+1/2)}
{\Gamma(x+\rho_1+1)\Gamma(x+\rho_2+1)\Gamma(x-r_1+1/2)\Gamma(x-r_2+1/2)},
$$
where $c_2(x)$ is periodic function of period 1, $c_2(x+1)=c_2(x)$. 
The fact that $\xi_2(x)$ must be an eigenfunction of $H$ implies that $c_2(x)$ cannot be a constant. 
If we let
$$
c_2(x)=
\frac{\sin[\pi(x-\rho_1+1)]\sin[\pi(x-\rho_2+1)]}{\sin[\pi(x-r_1+1/2)]\sin[\pi(x-r_2+1/2)]},
$$
then the following expression of $\xi_2(x)$ satisfies $H[\xi_2(x)]=(r_1+r_2-\rho_1-\rho_2)\xi_2(x)$.
$$
\xi_2(x)=
\frac{\Gamma(x+r_1+1/2)\Gamma(-x+r_1+1/2)\Gamma(x+r_2+1/2)\Gamma(-x+r_2+1/2)}
{\Gamma(x+\rho_1+1)\Gamma(-x+\rho_1)\Gamma(x+\rho_2+1)\Gamma(-x+\rho_2)}
$$
In the derivation of the above expression of $\xi_2(x)$, Euler's reflection formula 
$$
\Gamma(x)\Gamma(1-x)=\pi/\sin(\pi x), \hspace{2mm}
x\notin\mathbb{Z}
$$
has been used (assuming $x-\rho_1+1$, $x-\rho_2+1$, $x-r_1+1/2$, $x-r_2+1/2\notin\mathbb{Z}$). 

\par
The remaining $\xi_k(x)$'s can be obtained in the same way 
by choosing suitable periodic function $c(x)$'s and applying Euler's reflection formula. 
We shall assume that the restriction $x\notin\mathbb{Z}$ is always satisfied wherever Euler's reflection formula 
was applied. This is not difficult since $\rho_1, \rho_2, r_1, r_2$ can be any real numbers.

\par
According to (3.1), (3.2), (3.3) and Lemma 3.1, 
it is easily seen that the conjugated operator 
\begin{empheq}[left={\tilde{H}=\empheqlbrace}]{align}
H(\sigma_d)+C_d,\hspace{2mm} & \quad \text{ if } d\in\{1,2,3,4\},  \nonumber \\
-H(\sigma_d)+C_d, & \quad  \text{ if } d\in\{5,6,7,8\},  \nonumber
\end{empheq}
thus $p^{(d)}_n(x)=B_n(x;\sigma_{d})$ and (3.8), (3.9) 
follow immediately. 
\end{proof}

\section{Exceptional Bannai-Ito polynomials}
\par
Using the results of Section 2 and Section 3, we are now able to construct the exceptional Bannai-Ito polynomials. 
We first show that there are missing degrees in the constructed exceptional Bannai-Ito polynomial sequences. 
Notably their missing degrees demonstrate different rules compared with the known 1-step XOPs.
And then we prove that the exceptional Bannai-Ito polynomials are orthogonal with respect 
to a discrete measure on the exceptional Bannai-Ito grid.

\par
Define an index set ${\bf D}=\{1,2,\ldots,8\}\times\mathbb{Z}_{\geq 0}$. 
For the sake of simplicity, we assume that for any index ${\bf d}=(d,m)\in{\bf D}$, 
a quasi-polynomial eigenfunction $\phi_{\bf d}(x)=\xi_{d}(x)p^{(d)}_m(x)$ is uniquely determined 
upon the constant multiplier. 
Last but not least, we assume that the Bannai-Ito polynomials mentioned in this paper are always monic 
(i.e. the coefficient of the highest order term is 1), hence 
$p^{(d)}_n(x)$ are always monic too. One could refer to Theorem 3.3.

\par
Now we take a quasi-polynomial eigenfunction $\phi_{\bf d}(x)$ with ${\bf d}=(d,m)$ as a seed solution and show that 
the Darboux transformed eigenfunction $\hat{\mathcal{F}}_{\phi_{\bf d}}[B_n(x)]$ is just the exceptional 
Bannai-Ito polynomial we want.
Firstly, a well selected decoupling coefficient $r(x)$ is essential.
\begin{lemma}
Assume that the decoupling coefficient $r(x)$ is given by
\begin{equation}
\label{eq:r}
r(x)=
\tilde{\chi}(x)\chi(x)\frac{\eta_{d}(x)}{x\xi_{d}(x)},
\end{equation}
where $\eta_d(x)$ is the polynomial of lowest degree such that 
$$
\frac{\xi_d(-x)}{\xi_d(x)}=\frac{\eta_{d}(-x)}{\eta_{d}(x)}.
$$
Then the Darboux transformed eigenfunction
$$
\hat{\mathcal{F}}_{\phi_{\bf d}}[B_n(x)]=\frac{r(x)}{\chi(x)\tilde{\chi}(x)}
\left|
    \begin{array}{ll}
      (I-R)[\phi_{\bf d}(x)] & (I-R)[B_n(x)] \\
      (I+TR)[\beta(-x-1)\phi_{\bf d}(x)] & (I+TR)[\beta(-x-1)B_n(x)]
    \end{array}
  \right|
$$
is a polynomial.
\end{lemma}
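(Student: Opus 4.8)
The plan is to turn the determinant into an explicit rational function of $x$ and then check that it has no poles. First I would unwind the prefactor: since $\hat{\mathcal{F}}_{\phi_{\bf d}}=r(x)\mathcal{F}_{\phi_{\bf d}}$ and $r(x)=\tilde\chi(x)\chi(x)\eta_d(x)/(x\xi_d(x))$, the coefficient $r(x)/(\chi(x)\tilde\chi(x))$ standing in front of the determinant collapses to $\eta_d(x)/(x\xi_d(x))$. Hence the claimed object equals $\hat{\mathcal{F}}_{\phi_{\bf d}}[B_n(x)]=\frac{\eta_d(x)}{x\xi_d(x)}\big(\chi(x)v(x)-u(x)\tilde\chi(x)\big)$, where $u(x)=B_n(x)-B_n(-x)$ and $v(x)=\beta(-x-1)B_n(x)+\beta(x)B_n(-x-1)$, while $\chi(x)=\phi_{\bf d}(x)-\phi_{\bf d}(-x)$ and $\tilde\chi(x)=\beta(-x-1)\phi_{\bf d}(x)+\beta(x)\phi_{\bf d}(-x-1)$. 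Substituting the seed $\phi_{\bf d}(x)=\xi_d(x)p^{(d)}_m(x)$ and factoring $\xi_d(x)$ out of each monomial of $\chi$ and $\tilde\chi$ — every such monomial carries exactly one gauge factor, evaluated at $x$, $-x$, or $-x-1$ — the outer $\xi_d(x)$ cancels the $1/\xi_d(x)$ of the prefactor, and the remainder depends on $\xi_d$ only through the two ratios $\xi_d(-x)/\xi_d(x)$ and $\xi_d(-x-1)/\xi_d(x)$. By Lemma 3.1 both ratios are rational, so all the Gamma-function content of $\xi_d$ disappears and $\hat{\mathcal{F}}_{\phi_{\bf d}}[B_n(x)]$ is a rational function of $x$; notably only the fact that $p^{(d)}_m$ is a polynomial is used, not any eigenvalue equation.

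Writing the outcome as $\frac{\eta_d(x)}{x}\,Q(x)$, with $Q$ the bracket left after $\xi_d(x)$ has been removed, I would then locate the poles of $Q$. Its only denominators are $\eta_d(x)$, entering through $\xi_d(-x)/\xi_d(x)=\eta_d(-x)/\eta_d(x)$, and $2x+1$, entering through the two $\beta$'s. The defining property of $\eta_d$ gives $\eta_d(x)\cdot\xi_d(-x)/\xi_d(x)=\eta_d(-x)$, a polynomial, so the factor $\eta_d(x)$ kills every zero of $\eta_d$; and the shift relation of Lemma 3.1 shows that $\beta(x)\cdot\xi_d(-x-1)/\xi_d(x)$ equals $\pm(x-r'_1+\tfrac12)(x-r'_2+\tfrac12)/(2x+1)$, a $\beta$-type function whose only denominator is again $2x+1$. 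Consequently $M(x):=\eta_d(x)Q(x)$ is rational with no pole except possibly at $x=-\tfrac12$; equivalently $(2x+1)M(x)$ is a polynomial.

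The crux is that the residue at $x=-\tfrac12$ vanishes. Here I would exploit that $(2x+1)\beta(-x-1)$ and $(2x+1)\beta(x)$ take the opposite values $-r_1r_2$ and $+r_1r_2$ at $x=-\tfrac12$, while the shifted arguments coincide there, $\phi_{\bf d}(-x-1)\to\phi_{\bf d}(-\tfrac12)$ and $B_n(-x-1)\to B_n(-\tfrac12)$; the two candidate simple poles then cancel. At the level of $\phi_{\bf d}$ and $B_n$ this is exactly the symmetry $\tilde\chi(-x-1)=\tilde\chi(x)$ (and its analogue for $v$) made quantitative. After $\xi_d$ is factored out the same cancellation survives, now using $\beta(x)\,\xi_d(-x-1)/\xi_d(x)=\pm(x-r'_1+\tfrac12)(x-r'_2+\tfrac12)/(2x+1)$ together with $r'_1r'_2=r_1r_2$ for $d\in\{1,2,3,4\}$ (respectively $r''_1r''_2=-r_1r_2$ for $d\in\{5,6,7,8\}$), which are precisely the products recorded in Lemma 3.1. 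Hence $M(x)$ has no pole at $-\tfrac12$ and is a genuine polynomial.

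Finally I would divide by $x$. At $x=0$ the reflection $x\mapsto-x$ is trivial and $\eta_d(-x)/\eta_d(x)\to1$, so each term of $Q$ (hence of $M$) is of the antisymmetric shape $A(x)-A(-x)$ or carries a factor $B_n(-x)-B_n(x)$; every such term vanishes at $x=0$, whence $M(0)=0$ and $M(x)/x=\hat{\mathcal{F}}_{\phi_{\bf d}}[B_n(x)]$ is a polynomial, as claimed. I expect the genuine obstacle to be the bookkeeping at $x=-\tfrac12$: one must pair the $\beta(-x-1)$ and $\beta(x)$ contributions with the correct opposite signs, invoke exactly the parameter products of Lemma 3.1, and verify that the argument runs uniformly across all eight gauge factors $\xi_1,\dots,\xi_8$ (where $\eta_d$ has degree $0$, $1$, or $2$ according to the $\rho$-reflection).
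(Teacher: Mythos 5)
Your proof is correct, but it takes a genuinely different route from the paper's. The paper never analyzes poles: it substitutes the eigenvalue relation (2.3), written for both $\phi_{\bf d}$ and $B_n$, into the expanded determinant, whereupon the cross terms $\alpha(x)(B_n(x)-B_n(-x))(\phi_{\bf d}(x)-\phi_{\bf d}(-x))$ cancel and every shifted argument $-x-1$ --- hence every $\beta$-factor carrying the $2x+1$ denominator --- is eliminated in one algebraic step, using that $\beta(x)+\beta(-x-1)=-\beta$ is constant; what remains is the explicit formula (4.2), namely $(\lambda_n-\beta)\big(\eta_d(x)p^{(d)}_m(x)-\eta_d(-x)p^{(d)}_m(-x)\big)B_n(x)/x-(\mu_{\bf d}-\beta)\,\eta_d(x)p^{(d)}_m(x)\big(B_n(x)-B_n(-x)\big)/x$, whose polynomiality is immediate because both quotients are odd polynomials divided by $x$. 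You instead keep the $-x-1$ terms and run a direct pole/residue analysis: the zeros of $\eta_d$ are killed by the prefactor, and the potential simple pole at $x=-\tfrac{1}{2}$ has vanishing residue because $(2x+1)\beta(-x-1)\to-r_1r_2$ while $(2x+1)\beta(x)\xi_d(-x-1)/\xi_d(x)\to r'_1r'_2=r_1r_2$ (resp.\ $-r''_1r''_2=r_1r_2$), which holds uniformly in $d$ precisely because of the products recorded in Lemma 3.1; I checked the sign bookkeeping and it is right in all eight cases. The trade-off: the paper's substitution is shorter and yields the closed expression (4.2) on which the rest of Section 4 (the degree analysis of Proposition 4.3 and the decomposition of Proposition 4.5) is built, whereas your argument isolates strictly weaker hypotheses --- only that $p^{(d)}_m$ and $B_n$ are polynomials and that $\xi_d$ has the gauge structure of Lemma 3.1, with no eigenvalue equation at all --- so it proves a slightly more general fact, at the cost of case bookkeeping and without producing the explicit formula needed downstream. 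One cosmetic repair: at $x=0$ you should appeal to the oddness of the polynomial $\eta_d(x)p^{(d)}_m(x)-\eta_d(-x)p^{(d)}_m(-x)$ rather than to $\eta_d(-x)/\eta_d(x)\to 1$, which is meaningless when $\eta_d(0)=0$; the conclusion $M(0)=0$ is unaffected.
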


\begin{proof}
Given the seed solution 
$\phi_{\bf d}(x)=\xi_{d}(x)p^{(d)}_m(x)$ whose eigenvalue is $\mu_{\bf d}$, we have
$$\hspace{-30mm}
\frac{\chi(x)\tilde{\chi}(x)}{r(x)}\hat{\mathcal{F}}_{\phi_{\bf d}}[B_n(x)] =
(\phi_{\bf d}(x)-\phi_{\bf d}(-x))(\beta(-x-1)B_n(x)+\beta(x)B_n(-x-1))
$$
$$\hspace{8mm}
-(B_n(x)-B_n(-x))(\beta(-x-1)\phi_{\bf d}(x)+\beta(x)\phi_{\bf d}(-x-1))
$$
$$\hspace{2mm}
=(\phi_{\bf d}(x)-\phi_{\bf d}(-x))[(\lambda_n+\beta(x)+\beta(-x-1))B_n(x)+\alpha(x)(B_n(x)-B_n(-x))]
$$
$$\hspace{9mm}
-(B_n(x)-B_n(-x))[(\mu_{\bf d}+\beta(x)+\beta(-x-1))\phi_{\bf d}(x)+\alpha(x)(\phi_{\bf d}(x)-\phi_{\bf d}(-x))]
$$
$$\hspace{-34mm}
=(\lambda_n+\beta(x)+\beta(-x-1))B_n(x)(\phi_{\bf d}(x)-\phi_{\bf d}(-x))
$$
$$\hspace{-27mm}
-(\mu_{\bf d}+\beta(x)+\beta(-x-1))\phi_{\bf d}(x)(B_n(x)-B_n(-x))
$$
$$\hspace{-1mm}
=\frac{x\xi_{d}(x)}{\eta_{d}(x)}\bigg[
(\lambda_n+\beta(x)+\beta(-x-1))B_n(x)\big(\frac{\eta_{d}(x)p^{(d)}_m(x)-\eta_{d}(-x)p^{(d)}_m(-x)}{x}\big)
$$
$$\hspace{-14mm}
-(\mu_{\bf d}+\beta(x)+\beta(-x-1))\eta_{d}(x)p^{(d)}_m(x)\big(\frac{B_n(x)-B_n(-x)}{x}\big)\bigg],
$$
where the second equation follows from equation (2.3) which can be used as 
\begin{eqnarray*}
\beta(-x-1)\phi_{\bf d}(x)+\beta(x)\phi_{\bf d}(-x-1)
\hspace{-2.4mm}&=&\hspace{-2.4mm}
(\mu_{\bf d}+\beta(x)+\beta(-x-1))\phi_{\bf d}(x)+\alpha(x)(\phi_{\bf d}(x)-\phi_{\bf d}(-x)), \\
\beta(-x-1)B_n(x)+\beta(x)B_n(-x-1)
\hspace{-2.4mm}&=&\hspace{-2.4mm}
(\lambda_n+\beta(x)+\beta(-x-1))B_n(x)+\alpha(x)(B_n(x)-B_n(-x)).
\end{eqnarray*}
The third equation is obtained after the elimination of 
$$
\alpha(x)(B_n(x)-B_n(-x))(\phi_{\bf d}(x)-\phi_{\bf d}(-x)).
$$
Finally, by introducing the polynomial $\eta_{d}(x)$ we arrive at the fourth equation. 
Notice that the function in the square brackets is just the desired polynomial. 
Therefore, the decoupling coefficient should be
$$
r(x)=\tilde{\chi}(x)\chi(x)\frac{\eta_{d}(x)}{x\xi_{d}(x)}.
$$
\vspace{-2mm}
\end{proof}

\par
Here we give all the functions $\eta_d(x)$ explicitly for later convenience. 
This list is obtained via Lemma 3.1, 
\begin{center}
  \begin{tabular}{ll}
$\eta_1(x)=1$,  & $\eta_2(x)=(x-\rho_1)(x-\rho_2)$, \\
$\eta_3(x)=(x-\rho_1)(x-\rho_2)$,  & $\eta_4(x)=1$, \\
$\eta_5(x)=(x-\rho_1)$,  & $\eta_6(x)=(x-\rho_2)$, \\
$\eta_7(x)=(x-\rho_2)$,  & $\eta_8(x)=(x-\rho_1)$. \\
  \end{tabular}
\end{center}

\par
With $r(x)$ given by Lemma 4.1, let $B^{(1)}_{{\bf d},n}(x)=\hat{\mathcal{F}}_{\phi_{\bf d}}[B_{n}(x)]$, 
then $\{B^{(1)}_{{\bf d},n}(x)\}$ are the polynomial eigenfunctions of $\hat{H}^{(1)}$:
$\hat{H}^{(1)}[B^{(1)}_{{\bf d},n}(x)]=\lambda_nB^{(1)}_{{\bf d},n}(x)$ ($\lambda_n\neq\mu_{\bf d}$).
In what follows, we give an analysis of the degree of $B^{(1)}_{{\bf d},n}(x)$ 
and show that there are missing degrees in their polynomial sequences.
From the proof of Lemma 4.1, we have
\begin{equation}
\label{eq: B1}
B^{(1)}_{{\bf d},n}(x)=
(\lambda_n-\beta)\bigg(\frac{\eta_{d}(x)p^{(d)}_m(x)-\eta_{d}(-x)p^{(d)}_m(-x)}{x}\bigg)B_n(x)
\end{equation}
$$\hspace{8mm}
-(\mu_{\bf d}-\beta)\bigg(\frac{B_n(x)-B_n(-x)}{x}\bigg)\eta_{d}(x)p^{(d)}_m(x),
$$
where the constant $\beta=-(\beta(x)+\beta(-x-1))=r_1+r_2$ (refers to (2.22)). 

\begin{rmk}
Note that $\mu_{4,0}=r_1+r_2=\beta$ (refers to (3.8)), 
in this case the polynomials $B^{(1)}_{4,0,n}(x)=0$ for $n=0,1,2,\ldots$,
which implies that ${\bf d}=(4,0)$ cannot be chosen as the index of a seed solution.
\end{rmk}

\par
In order to see the degree of $B^{(1)}_{{\bf d},n}(x)$, let 
\begin{equation}
\label{Bn}
B_n(x)=x^n+a_{n-1}x^{n-1}+\cdots
\end{equation}
then from Theorem 3.3 we have
$$
p^{(d)}_n(x)=B_n(x;\sigma_{d})=x^n+a_{n-1}(\sigma_{d})x^{n-1}+\cdots
$$
where $a_{i}(\sigma_{d})$ ($i=0,1,\ldots,n-1$) are the coefficients with respect to the parameterization $\sigma_{d}$.
For $d=1,\ldots,8$, denote the degree of $\eta_{d}(x)$ by $\kappa_{d}$ such that
$
\eta_{d}(x)=x^{\kappa_{d}}+b^{(d)}_{\kappa_{d}-1}x^{\kappa_{d}-1}+\cdots.
$
Then 
\begin{equation}
\label{etaP}
\eta_{d}(x)p^{(d)}_m(x)=x^{m+\kappa_{d}}+(a_{m-1}(\sigma_{d})+b^{(d)}_{\kappa_{d}-1})x^{m+\kappa_{d}-1}+\cdots,
\end{equation}
where $\kappa_{d}=0$ for $d\in\{1,4\}$; $\kappa_{d}=2$ for $d\in\{2,3\}$; 
and $\kappa_{d}=1$ for $d\in\{5,6,7,8\}$.

\begin{prop}
Let $X$ be the set of degrees of $\{B^{(1)}_{{\bf d},n}(x)\}$, where the index ${\bf d}\in {\bf D}\backslash \{(1,n), (4,0)\}$. 
If the conditions (4.14) and (4.15)
are satisfied, 
then we have
\begin{itemize}
\item for $m$ is odd,
\begin{itemize}
\item[$\bullet$] $d=1:$  $X=\{m-1, m, m+1, \cdots, 2m-2, 2m, \cdots\};$
\item[$\bullet$] $d=4:$  $X=\{m-1, m, m+1, m+2, m+3, \cdots\};$
\item[$\bullet$] $d=2,3:$  $X=\{m+1, m+2, m+3, m+4, m+5, \cdots\};$
\item[$\bullet$] $d=5,6,7,8:$  $X=\{m-1, m+1, m+1, m+3, m+3, \cdots\};$
\end{itemize}
\item for $m$ is even, 
\begin{itemize}
\item[$\bullet$] $d=1:$ $X=\{m-2, m, m, \cdots, 2m-4, 2m-4, 2m-2, 2m, 2m, \cdots\};$
\item[$\bullet$] $d=4:$  $X=\{m-2, m, m, m+2, m+2, \cdots\};$
\item[$\bullet$] $d=2,3:$  $X=\{m, m+2, m+2, m+4, m+4, \cdots\};$
\item[$\bullet$] $d=5,6,7,8:$  $X=\{m, m+1, m+2, m+3, m+4, \cdots\}.$
\end{itemize}
\end{itemize}
\end{prop}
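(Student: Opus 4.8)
The plan is to read the degree of $B^{(1)}_{{\bf d},n}(x)$ directly off the explicit expression (\ref{eq: B1}), treating the whole proposition as a careful bookkeeping of leading (and, in one case, subleading) coefficients. Writing (\ref{eq: B1}) as
\[
B^{(1)}_{{\bf d},n}(x)=(\lambda_n-\beta)A_{\bf d}(x)B_n(x)-(\mu_{\bf d}-\beta)C_n(x)\,\eta_d(x)p^{(d)}_m(x),
\]
where $A_{\bf d}(x)=\big(\eta_d(x)p^{(d)}_m(x)-\eta_d(-x)p^{(d)}_m(-x)\big)/x$ and $C_n(x)=\big(B_n(x)-B_n(-x)\big)/x$, the first thing I would record is an elementary parity lemma: for a polynomial $f$ of degree $D$, the quotient $(f(x)-f(-x))/x$ is an even polynomial of degree $D-1$ with leading coefficient twice that of $f$ if $D$ is odd, and of degree $D-2$ with leading coefficient twice the \emph{subleading} coefficient of $f$ if $D$ is even. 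Applying this to $Q:=\eta_d p^{(d)}_m$ (of degree $M:=m+\kappa_d$) and to $B_n$ pins down the degrees and leading coefficients of $A_{\bf d}$ and $C_n$, so that the whole computation reduces to comparing the two products according to the parities of $n$ and of $M$.

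Next I would split into the four parity cases for $(n,M)$. In the two mixed-parity cases the two products have different top degrees, so $\deg B^{(1)}_{{\bf d},n}$ equals the larger one, namely $n+M-1$, with leading coefficient a nonzero multiple of $\mu_{\bf d}-\beta$ (when $n$ is odd, $M$ even) or of $\lambda_n-\beta$ (when $n$ is even, $M$ odd); these are nonzero after excluding the index $(4,0)$ (Remark 4.2) and under the conditions (4.14) and (4.15). In the case $n,M$ both odd the two products share the top degree $n+M-1$, and the combined leading coefficient collapses to $2(\lambda_n-\mu_{\bf d})$, which is nonzero exactly when $\lambda_n\neq\mu_{\bf d}$, i.e. away from any deleted index. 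All of these are clean because the relevant top coefficients are genuine leading coefficients of the monic factors.

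The only delicate case is $n,M$ both even. Here both products attain degree $n+M-2$, their top coefficients are the subleading coefficients of $Q$ and of $B_n$, and the combined leading coefficient is
\[
2\big[(\lambda_n-\beta)q_{M-1}-(\mu_{\bf d}-\beta)a_{n-1}\big],\qquad q_{M-1}=a_{m-1}(\sigma_d)+b^{(d)}_{\kappa_d-1}.
\]
This is the main obstacle. I would compute the subleading coefficient $a_{n-1}$ of the monic Bannai-Ito polynomial (from its three-term recurrence, as a sum of the recurrence coefficients) together with $q_{M-1}$, and show that the bracket vanishes only at the deleted index $n=m$ of the $d=1$ family — where $\mu_{\bf d}=\lambda_m$ and $q_{M-1}=a_{m-1}$, so the bracket is proportional to $\lambda_n-\lambda_m$ — and nowhere else once (4.14) and (4.15) are imposed. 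The boundary value $n=0$ must be treated separately: there $C_0\equiv 0$, the second product disappears, and the degree is that of $A_{\bf d}$, namely $M-2$.

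Finally I would assemble the multiset $X$ for each class by letting $n$ range over the admissible indices ($\lambda_n\neq\mu_{\bf d}$) and collecting the degrees found above, organized by $\kappa_d$ (which fixes $M=m+\kappa_d$, with $\kappa_d=0$ for $d\in\{1,4\}$, $2$ for $d\in\{2,3\}$, $1$ for $d\in\{5,6,7,8\}$, the odd value flipping the parity of $M$ relative to $m$) and by the parity of $m$. The even and odd values of $n$ then interleave: when both parities of $n$ land on a common degree (which happens precisely when a degree is reached at $n+M-1$ by one parity and at $n+M-2$ by the next) that degree is doubled, and otherwise they fill a consecutive run. The sole qualitative feature distinguishing the $d=1$ family is that $\mu_{\bf d}=\lambda_m$ coincides with a Bannai-Ito eigenvalue, so the index $n=m$ is deleted; removing this one contribution produces the gap at degree $2m-1$ for odd $m$ and breaks the doubling at $2m-2$ for even $m$. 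Matching these tallies against the eight claimed lists completes the proof.
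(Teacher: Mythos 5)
Your proposal is correct and follows essentially the same route as the paper: expand (4.2), use the parity identity $x^k+(-x)^k=2x^k$ ($k$ even) or $0$ ($k$ odd) to split into the four parity cases of $n$ and $m+\kappa_d$, identify the leading coefficients $2(\lambda_n-\beta)$, $-2(\mu_{\bf d}-\beta)$, $2(\lambda_n-\mu_{\bf d})$ and $2C_{d,m,n}$, and then verify their non-vanishing under (4.14)--(4.15) by explicit computation (your three-term-recurrence evaluation of $a_{n-1}$ and $q_{M-1}$ is exactly what the paper outsources to the formulas (A.1)--(A.24) in Appendix A, where e.g.\ $C_{1,m,n}$ indeed carries the factor $(m-n)/2$ responsible for the sole deletion at $n=m$ in the $d=1$ family). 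Your separate treatment of $n=0$ is a small extra care the paper leaves implicit, but it does not change the argument.
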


\begin{proof}
According to (\ref{Bn}) and (\ref{etaP}), the right-hand side of (\ref{eq: B1}) can be expanded as
$$
B^{(1)}_{{\bf d},n}(x)=
(\lambda_n-\beta)[x^{m+\kappa_{d}-1}+(-x)^{m+\kappa_{d}-1}+
(a_{m-1}(\sigma_{d})+b_{\kappa_{d}-1})(x^{m+\kappa_{d}-2}+(-x)^{m+\kappa_{d}-2})+\cdots]
(x^n+a_{n-1}x^{n-1}+\cdots)
$$
$$\hspace{16mm}
-(\mu_{\bf d}-\beta)
[x^{n-1}+(-x)^{n-1}+a_{n-1}(x^{n-2}+(-x)^{n-2})+\cdots]
[x^{m+\kappa_{d}}+(a_{m-1}(\sigma_{d})+b_{\kappa_{d}-1})x^{m+\kappa_{d}-1}+\cdots]
$$
$$\hspace{-3mm}
=(\lambda_n-\beta)[x^{m+\kappa_{d}-1}+(-x)^{m+\kappa_{d}-1}
+(a_{m-1}(\sigma_{d})+b_{\kappa_{d}-1})(x^{m+\kappa_{d}-2}+(-x)^{m+\kappa_{d}-2})]x^{n}
$$
$$\hspace{-18mm}
-(\mu_{\bf d}-\beta)
[x^{n-1}+(-x)^{n-1}+a_{n-1}(x^{n-2}+(-x)^{n-2})]x^{m+\kappa_{d}}
+\cdots.
$$
Note that $x^k+(-x)^k=2x^k$ if $k$ is even, otherwise $x^k+(-x)^k=0$.
The degree of $B^{(1)}_{{\bf d},n}(x)$ depends on the parities of $m+\kappa_{d}$ and $n$.
Specifically, the leading term of $B^{(1)}_{{\bf d},n}(x)$ is given by:
\begin{empheq}[left={B^{(1)}_{{\bf d},n}(x)=\empheqlbrace}]{align}
2(\lambda_n-\beta)x^{n+m+\kappa_{d}-1}+\cdots, \hspace{1.8mm} & \quad 
 \text{ if } m+\kappa_{d} \text{ is odd and } n \text{ is even,} \nonumber \\
2(\lambda_n-\mu_{\bf d})x^{n+m+\kappa_{d}-1}+\cdots,  & \quad 
\text{ if } m+\kappa_{d} \text{ is odd and } n \text{ is odd,} \nonumber \\
2C_{d,m,n}x^{n+m+\kappa_{d}-2}+\cdots,  \hspace{4.8mm} & \quad 
\text{ if } m+\kappa_{d} \text{ is even and } n \text{ is even,} \nonumber  \\
-2(\mu_{\bf d}-\beta)x^{n+m+\kappa_{d}-1}+\cdots,  & \quad 
\text{ if } m+\kappa_{d} \text{ is even and } n \text{ is odd,} \nonumber 
\end{empheq}
where $C_{n,m,d}=(\lambda_n-\beta)(a_{m-1}(\sigma_{d})+b^{(d)}_{\kappa_{d}-1})-(\mu_{\bf d}-\beta)a_{n-1}$.
Recall that in the case $d=1$, the seed solution is $\phi_{(1,m)}=B_m(x)$, thus 
$B^{(1)}_{{\bf d},n}(x)=0$ for $d=1$, $n=m$ since $\hat{\mathcal{F}}_{\phi_{(1,m)}}[B_m(x)]=0$. 
In the other cases if we assume that the coefficient of the leading term of $B^{(1)}_{{\bf d},n}(x)$ is always non-zero, i.e.
\begin{eqnarray}
& & \lambda_n-\beta \neq 0, \hspace{2mm} \text{ if } n\text{ is even, }  \\
& & \mu_{\bf d}-\beta \neq 0, \hspace{2mm} \text{ if } d\in\{1,2,3,4\} \text{ and } m\text{ is even; or } 
d\in\{5,6,7,8\} \text{ and } m\text{ is odd, } \\
& & \lambda_n-\mu_{\bf d}\neq 0, \hspace{2mm} \text{ if } d\in\{1,2,3,4\}, m\text{ is odd and } n \text{ is odd; } \\
& & \hspace{2.1cm} \text{or }d\in\{5,6,7,8\}, m\text{ is even and } n \text{ is odd, } \nonumber \\
& & C_{d,m,n}\neq 0, \hspace{2mm} \text{ if }d\in\{1,2,3,4\}, m\text{ is even and } n \text{ is even; }  \\
& & \hspace{1.8cm}\text{or } d\in\{5,6,7,8\}, m\text{ is odd and } n \text{ is even,} \nonumber 
\end{eqnarray}
are satisfied, then for the degree of $B^{(1)}_{{\bf d},n}(x)$ we have
\vspace{-2mm}
\begin{empheq}[left={\deg{(B^{(1)}_{{\bf d},n}(x))}=\empheqlbrace}]{align}
n+m+\kappa_{d}-1, & \quad \text{ if } m+\kappa_{d} \text{ is odd; } 
\text{ or } m+\kappa_{d} \text{ is even and } n \text{ is odd,} \\
n+m+\kappa_{d}-2, & \quad \text{ if } m+\kappa_{d} \text{ is even and } n \text{ is even.} 
\end{empheq}
By analyzing the parity of $m$ and $n$, it turns out that (4.9) leads to the first three cases for $m$ is odd 
and the last case for $m$ is even, 
while (4.10) leads to the other cases of Proposition 4.3.

\par
Finally, let us consider under which conditions will (4.5)-(4.8) be satisfied. 
It follows from (1.3), (1.4) and (2.23) that 
\vspace{-2mm}
\begin{empheq}[left={\lambda_n-\beta=\empheqlbrace}]{align}
n/2-(r_1+r_2), \hspace{10.8mm} & \quad \text{ if } n \text{ is even, }  \\
-(\rho_1+\rho_2)-(n+1)/2, & \quad  \text{ if } n \text{ is odd,} 
\end{empheq}
thus (4.5) holds under the condition 
\begin{equation}
\label{cond 1}
r_1+r_2 \notin \mathbb{Z}.
\end{equation} 
In the Appendix A we list the explicit expressions for $\mu_{\bf d}-\beta$, $\lambda_n-\mu_{\bf d}$ and $C_{d,m,n}$ 
in the corresponding cases. From (A.1)-(A.24) we can see that (4.6)-(4.8) are satisfied under the conditions 
\begin{equation}
\label{cond 2}
r_1+r_2, \hspace{2mm} \rho_1+\rho_2, \hspace{2mm} r_1+r_2+\rho_1+\rho_2, \hspace{2mm} 
r_1+r_2-\rho_1-\rho_2
\notin \mathbb{Z}, 
\end{equation} 
\begin{equation}
\label{cond 3}
\frac{r_1-r_2-\rho_1+\rho_2+1}{2}, \hspace{2mm} \frac{r_1-r_2+\rho_1-\rho_2+1}{2}, \hspace{2mm} 
r_i-\rho_j+\frac{1}{2}, \hspace{2mm} r_i+\rho_j+\frac{1}{2}
\notin \mathbb{Z}, \hspace{2mm} i,j\in\{1,2\},
\end{equation} 
and the index ${\bf d}=(d,m)\notin \{(1,n), (4,0)\}$.

\end{proof}

\begin{rmk}
As we addressed in the proof of Proposition 4.3, in both cases $d=1$ there are missing degrees ($2m-1$ and $2m-2$, respectively) in the degree sequence of $\{B^{(1)}_{{\bf d},n}(x)\}$ when $n=m$, 
since the seed solutions $\phi_{(1,m)}=B_m(x)$ and the trivial eigenfunctions 
$\hat{\mathcal{F}}_{\phi_{(1,m)}}[B_m(x)]=0$. 
On the other hand, the nontrivial eigenfunction given by $(\ref{DTeigenfunction})$ becomes
$$
\phi_{\bf d}^{(1)}(x)=
\frac{\sigma(x)r(x)}{\tilde{\chi}(x)\chi(x)\alpha(x)\omega(x)}=
\frac{\sigma(x)\eta_{d}(x)}{x\alpha(x)\omega(x)\xi_{d}(x)}.
$$
Here we may choose 
$\sigma(x)=x\alpha(x)\omega(x)\xi_{1}(x)/\eta_{1}(x)$. 
With this choice it holds that $\sigma(x+1)=\sigma(x)$ and $\sigma(-x)=-\sigma(x)$, 
then 
$\phi_{\bf d}^{(1)}(x)=
\eta_{d}(x)\xi_{1}(x)/\eta_{1}(x)\xi_{d}(x),$
such that only when $d=1$ the nontrivial eigenfunction $\phi_{\bf d}^{(1)}(x)$ is a polynomial and 
$\phi_{(1,m)}^{(1)}(x)=1$.
This eigenfunction should be added to the sequence $\{B^{(1)}_{(1,m),n}(x)\}$, 
hence the polynomial sequence starts from degree $0$.
However, for the convenience of later discussion, we will not include this term into the XBI polynomial sequence.
\end{rmk}

\par
Notice that in each case there are missing degrees in the degree sequence $X$, 
and the degrees of $\{B^{(1)}_{{\bf d},n}(x)\}$ demonstrate exactly opposite features with regard to the parity of $m$. 
Specifically, in the first three cases when $m$ is odd and the in the last case when $m$ is even 
$X$ behaves similarly as the known 1-step XOPs, where $X$ is cofinite (the complement of $X$ is finite).
However, in the other cases $X$ is not cofinite and only contains even degrees. 
This fact implies that in these cases, the normalized 1-step exceptional Bannai-Ito operator $\hat{H}^{(1)}$ only has 
even-order eigenpolynomials,  and there are two different series of these eigenpolynomials. 
It would be better to say that the eigenpolynomials of odd degrees are not been deleted 
but are just replaced by the ones of even degrees.
For example, in the case $m=2$ and $d\in\{2,3\}$, the degree set of $\{B^{(1)}_{{\bf d},n}(x)\}$ is $X=\{2,4,4,6,6,\ldots\}$. 
The polynomials $B^{(1)}_{{\bf d},n}(x)$ are different from each other even when they have the same degree, 
and by no exception they are orthogonal according to Theorem 4.11. 

\par
The feature that $X$ is not cofinite naturally conflicts with the definition of XOPs 
which satisfy a second-order differential (difference) equation. 
This is not surprising, the constraint that $X$ should be cofinite is just the consequence of the way 
XOPs in \cite{ECH, EML, EHJ, GKM09, GKM10, 2DT, EH, BEOP, XLJ} are presented, 
the reflection operator $R$ has not appeared in their eigenvalue equations. 
Taking this opportunity, we would like to modify the definition of XOPs as the generalization of COPs 
where the only condition to be removed is that it contains polynomials of all degrees. 
By this means, the XOPs are characterized by orthogonality and  
forming the polynomial eigenfunctions of certain differential (difference) operators. 
From now on, we call $\{B^{(1)}_{{\bf d},n}(x)\}$ the 1-step exceptional Bannai-Ito (XBI) polynomials.

\begin{prop}
The exceptional Bannai-Ito polynomials $B^{(1)}_{{\bf d},n}(x)$ can be expressed as the linear combination of the 
Bannai-Ito polynomials $B_n(x)$ and $B_n(-x)-B_n(x)$,
$$
B^{(1)}_{{\bf d},n}(x)=C^{(1)}_{{\bf d},n}(x)B_n(x)+C^{(2)}_{{\bf d}}(x)(B_n(x)-B_n(-x)),
$$
where 
$$
C^{(1)}_{{\bf d},n}(x)=(\lambda_n-\beta)\bigg(\frac{\hat{\phi}_{\bf d}^{(0)}(x)-\hat{\phi}_{\bf d}^{(0)}(-x)}{x}\bigg), 
\hspace{2mm}
C^{(2)}_{{\bf d}}(x)=-(\mu_{\bf d}-\beta)\frac{\hat{\phi}_{\bf d}^{(0)}(x)}{x},
$$
and $\hat{\phi}_{\bf d}^{(0)}(x)$ is the normalized seed solution which is a polynomial:
\begin{equation}
\label{eq:normalized seed}
\hat{\phi}_{\bf d}^{(0)}(x)=\eta_{d}(x)p^{(d)}_m(x)=\phi_{\bf d}(x)\frac{\eta_{d}(x)}{\xi_{d}(x)}.
\end{equation} 
\end{prop}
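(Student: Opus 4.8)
The plan is to read the claimed decomposition directly off the final display in the proof of Lemma 4.1, namely equation (\ref{eq: B1}), so that the proposition reduces to a change of notation together with a verification that the indicated pieces are genuinely polynomial. First I would recall from Theorem 3.3 that the seed solution is $\phi_{\bf d}(x)=\xi_d(x)p^{(d)}_m(x)$ with $p^{(d)}_m(x)=B_m(x;\sigma_d)$ a monic polynomial, and that $\eta_d(x)$ is the polynomial of lowest degree introduced in Lemma 4.1. I would then set $\hat{\phi}_{\bf d}^{(0)}(x):=\eta_d(x)p^{(d)}_m(x)$ and observe that $\hat{\phi}_{\bf d}^{(0)}(x)=\phi_{\bf d}(x)\,\eta_d(x)/\xi_d(x)$, which is precisely (\ref{eq:normalized seed}); since both $\eta_d(x)$ and $p^{(d)}_m(x)$ are polynomials, $\hat{\phi}_{\bf d}^{(0)}(x)$ is a polynomial, which already settles the final assertion of the proposition.

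Next I would substitute $\eta_d(x)p^{(d)}_m(x)=\hat{\phi}_{\bf d}^{(0)}(x)$ into (\ref{eq: B1}) and match terms. The first summand becomes $(\lambda_n-\beta)\big(\hat{\phi}_{\bf d}^{(0)}(x)-\hat{\phi}_{\bf d}^{(0)}(-x)\big)B_n(x)/x$, which is exactly $C^{(1)}_{{\bf d},n}(x)B_n(x)$ with $C^{(1)}_{{\bf d},n}(x)=(\lambda_n-\beta)\big(\hat{\phi}_{\bf d}^{(0)}(x)-\hat{\phi}_{\bf d}^{(0)}(-x)\big)/x$. The second summand becomes $-(\mu_{\bf d}-\beta)\hat{\phi}_{\bf d}^{(0)}(x)\big(B_n(x)-B_n(-x)\big)/x$, and after absorbing the factor $1/x$ into the coefficient this is $C^{(2)}_{\bf d}(x)\big(B_n(x)-B_n(-x)\big)$ with $C^{(2)}_{\bf d}(x)=-(\mu_{\bf d}-\beta)\hat{\phi}_{\bf d}^{(0)}(x)/x$. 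Collecting the two identifications yields the stated formula.

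The only point deserving genuine care — and what I would regard as the real content, the rest being bookkeeping — is the polynomiality of each coefficient-times-factor pair, so that no spurious singularity at $x=0$ is introduced. For the $C^{(1)}$ term I would note that $\hat{\phi}_{\bf d}^{(0)}(x)-\hat{\phi}_{\bf d}^{(0)}(-x)$ is an odd function vanishing at $x=0$, whence its quotient by $x$ is a polynomial and $C^{(1)}_{{\bf d},n}(x)$ is polynomial. For the $C^{(2)}$ term the situation is slightly different: $\hat{\phi}_{\bf d}^{(0)}(x)/x$ need not be a polynomial, but the companion factor $B_n(x)-B_n(-x)$ is odd and vanishes at $x=0$, so $\big(B_n(x)-B_n(-x)\big)/x$ is a polynomial and the product $C^{(2)}_{\bf d}(x)\big(B_n(x)-B_n(-x)\big)$ is polynomial. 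This is fully consistent with Lemma 4.1, which already guarantees that $B^{(1)}_{{\bf d},n}(x)$ is a polynomial, so no new obstruction can appear.

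Thus I expect no substantive difficulty: the decomposition is a convenient regrouping of (\ref{eq: B1}) that isolates the $n$-dependence in $C^{(1)}_{{\bf d},n}(x)$ and confines the ${\bf d}$-dependent, $n$-independent part to $C^{(2)}_{\bf d}(x)$. I would close by remarking that this form is exactly what is needed for the subsequent orthogonality and degree analysis, since it exhibits $B^{(1)}_{{\bf d},n}(x)$ as an explicit combination of $B_n(x)$ and the reflected difference $B_n(x)-B_n(-x)$.
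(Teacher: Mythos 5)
Your proposal is correct and follows exactly the paper's route: the paper's proof of this proposition is simply the observation that the decomposition follows directly from equation (\ref{eq: B1}) in the proof of Lemma 4.1, which is precisely your substitution $\hat{\phi}_{\bf d}^{(0)}(x)=\eta_d(x)p^{(d)}_m(x)$ and regrouping of terms. Your added verification that each piece is polynomial (oddness of $\hat{\phi}_{\bf d}^{(0)}(x)-\hat{\phi}_{\bf d}^{(0)}(-x)$ for the $C^{(1)}$ term, and of $B_n(x)-B_n(-x)$ for the $C^{(2)}$ term) is a sound elaboration of a point the paper leaves implicit, but it does not change the argument.
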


\begin{proof}
The above result follows directly from (\ref{eq: B1}).
\end{proof}

\par
\begin{corollary}
The exceptional Bannai-Ito polynomials $B^{(1)}_{{\bf d},n}(x)$ satisfy
\begin{equation}
B^{(1)}_{{\bf d},n}(x)-B^{(1)}_{{\bf d},n}(-x)=
\frac{1}{x}(\lambda_n-\mu_{\bf d})(\hat{\phi}_{\bf d}^{(0)}(x)-\hat{\phi}_{\bf d}^{(0)}(-x))(B_n(x)-B_n(-x)).
\end{equation}
\end{corollary}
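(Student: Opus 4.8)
The plan is to take the explicit decomposition furnished by Proposition 4.5 as the starting point and simply antisymmetrize it in $x$. Writing $B^{(1)}_{{\bf d},n}(x)=C^{(1)}_{{\bf d},n}(x)B_n(x)+C^{(2)}_{{\bf d}}(x)(B_n(x)-B_n(-x))$, the first thing I would record is the parity of the two coefficient functions. Since $\hat{\phi}_{\bf d}^{(0)}(x)-\hat{\phi}_{\bf d}^{(0)}(-x)$ is odd, the quotient $\big(\hat{\phi}_{\bf d}^{(0)}(x)-\hat{\phi}_{\bf d}^{(0)}(-x)\big)/x$ is even, so $C^{(1)}_{{\bf d},n}(-x)=C^{(1)}_{{\bf d},n}(x)$; on the other hand $C^{(2)}_{{\bf d}}(x)=-(\mu_{\bf d}-\beta)\hat{\phi}_{\bf d}^{(0)}(x)/x$ has no definite parity, but its symmetrization satisfies $C^{(2)}_{{\bf d}}(x)+C^{(2)}_{{\bf d}}(-x)=-(\mu_{\bf d}-\beta)\big(\hat{\phi}_{\bf d}^{(0)}(x)-\hat{\phi}_{\bf d}^{(0)}(-x)\big)/x$, which is again controlled by the same odd difference.

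Next I would substitute $x\mapsto -x$ in the decomposition and subtract. Using $C^{(1)}_{{\bf d},n}(-x)=C^{(1)}_{{\bf d},n}(x)$, the contribution of the $C^{(1)}_{{\bf d},n}B_n$ term collapses to $C^{(1)}_{{\bf d},n}(x)\big(B_n(x)-B_n(-x)\big)$, while the $C^{(2)}_{{\bf d}}$ term --- since $B_n(x)-B_n(-x)$ is odd --- contributes $\big(C^{(2)}_{{\bf d}}(x)+C^{(2)}_{{\bf d}}(-x)\big)\big(B_n(x)-B_n(-x)\big)$. Collecting, the antisymmetrized polynomial factors as
\[
B^{(1)}_{{\bf d},n}(x)-B^{(1)}_{{\bf d},n}(-x)=\Big[C^{(1)}_{{\bf d},n}(x)+C^{(2)}_{{\bf d}}(x)+C^{(2)}_{{\bf d}}(-x)\Big]\big(B_n(x)-B_n(-x)\big),
\]
where both surviving bracketed pieces carry the common factor $\big(\hat{\phi}_{\bf d}^{(0)}(x)-\hat{\phi}_{\bf d}^{(0)}(-x)\big)/x$.

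Finally I would read off the scalar multiplying that common factor, namely $(\lambda_n-\beta)-(\mu_{\bf d}-\beta)=\lambda_n-\mu_{\bf d}$, which reproduces exactly the claimed identity. The point worth emphasizing --- and the only place any cancellation happens --- is that the two copies of the constant $\beta=r_1+r_2$ drop out when the coefficients are combined, so that the right-hand side depends on the seed data only through the spectral gap $\lambda_n-\mu_{\bf d}$. There is no genuine obstacle here: the whole argument is parity bookkeeping layered on top of Proposition 4.5, and the one thing to watch is the sign incurred when the odd difference $\hat{\phi}_{\bf d}^{(0)}(x)-\hat{\phi}_{\bf d}^{(0)}(-x)$ is divided by $x$ and the argument is then reflected.
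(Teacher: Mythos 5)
Your proposal is correct and follows essentially the same route as the paper's own proof: antisymmetrize the decomposition of Proposition 4.5, use the evenness of $C^{(1)}_{{\bf d},n}(x)$ and the identity $C^{(2)}_{{\bf d}}(x)+C^{(2)}_{{\bf d}}(-x)=-(\mu_{\bf d}-\beta)\bigl(\hat{\phi}_{\bf d}^{(0)}(x)-\hat{\phi}_{\bf d}^{(0)}(-x)\bigr)/x$, then collect the scalar $(\lambda_n-\beta)-(\mu_{\bf d}-\beta)=\lambda_n-\mu_{\bf d}$. In fact your bookkeeping is slightly more careful than the paper's displayed intermediate steps, which carry a sign slip in the factor $(B_n(-x)-B_n(x))$ that your version avoids.
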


\begin{proof}
From Proposition 4.5, we have
\begin{eqnarray*}
B^{(1)}_{{\bf d},n}(x)-B^{(1)}_{{\bf d},n}(-x) \hspace{-2.4mm}&=&\hspace{-2.4mm}
C^{(1)}_{{\bf d},n}(x)B_n(x)-C^{(1)}_{{\bf d},n}(-x)B_n(-x) \\
\hspace{-2.4mm}& &\hspace{-2.4mm}
+(C^{(2)}_{{\bf d}}(x)+C^{(2)}_{{\bf d}}(-x))(B_n(-x)-B_n(x)).
\end{eqnarray*}
Since $C^{(1)}_{{\bf d},n}(x)=C^{(1)}_{{\bf d},n}(-x)$, it turns out that
\begin{eqnarray*}
B^{(1)}_{{\bf d},n}(x)-B^{(1)}_{{\bf d},n}(-x) \hspace{-2.4mm}&=&\hspace{-2.4mm}
(C^{(1)}_{{\bf d},n}(x)+C^{(2)}_{{\bf d}}(x)+C^{(2)}_{{\bf d}}(-x))(B_n(-x)-B_n(x)) \\
\hspace{-2.4mm}&=&\hspace{-2.4mm}
\frac{1}{x}(\lambda_n-\mu_{\bf d})(\hat{\phi}_{\bf d}^{(0)}(x)-\hat{\phi}_{\bf d}^{(0)}(-x))(B_n(-x)-B_n(x)).
\end{eqnarray*}
\end{proof}

\subsection{Orthogonality}
\par
A finite difference operator $L$ is said to be symmetric with respect to an inner product $\langle , \rangle_{\omega(x)}$
if it satisfies $\langle L[p(x)], q(x)\rangle_{\omega(x)} = \langle p(x), L[q(x)]\rangle_{\omega(x)} $ 
for any functions $p(x)$ and $q(x)$, where the inner product is defined by 
$\langle p(x),q(x)\rangle_{\omega(x)}=\sum_{x\in\chi}\omega(x)p(x)q(x)$.
It is known from the Lemma 2.4 of \cite{ECH} that if a difference operator is symmetric with respect to 
an inner product $\langle , \rangle_{\omega(x)}$, 
then its eigenfunctions are orthogonal with respect to $\omega(x)$.

\par
Assume that the operator $\mathcal{L}$ has polynomial eigenfunctions $\{p_n(x)\}$ 
$$
\mathcal{L}[p_n(x)]=\lambda_np_n(x) \hspace{2mm} 
(\lambda_n\neq\lambda_m, n\neq m),
$$
the linearity of the inner product 
$\langle , \rangle_{\omega(x)}$ implies
\begin{eqnarray*}
\langle \mathcal{L}[p_n(x)],p_m(x)\rangle_{\omega(x)} \hspace{-2.4mm}&=&\hspace{-2.4mm} 
\lambda_n\langle p_n(x),p_m(x)\rangle_{\omega(x)}, \\
\langle p_n(x),\mathcal{L}[p_m(x)]\rangle_{\omega(x)} \hspace{-2.4mm}&=&\hspace{-2.4mm} 
\lambda_m\langle p_n(x),p_m(x)\rangle_{\omega(x)}.
\end{eqnarray*}
Then if $\mathcal{L}$ is symmetric with respect to $\langle , \rangle_{\omega(x)}$, it holds that
$$
\langle p_n(x),p_m(x)\rangle_{\omega(x)}=0  \hspace{2mm} (n\neq m),
$$
which demonstrate the orthogonality of $\{p_n(x)\}$ with respect to $\omega(x)$.
In light of this conclusion we need first to derive the conditions for $\mathcal{L}$ to be symmetric.

\begin{lemma}
Let $\omega(x)$ be a weight function supported on a countable set $\chi\subset \mathbb{R}$. 
$\mathcal{L}$ is a Dunkl shift operator of the form $\mathcal{L}=F(x)R+G(x)TR+C(x)$. 
Assume that the functions $F(x)$, $G(x)$ and $\omega(x)$ satisfy the following relations
\begin{eqnarray}
\hspace{-2.5mm}& &\hspace{-2.5mm} \omega(x)F(x) =\omega(-x)F(-x), \label{eq:WC1} \\
\hspace{-2.5mm}& &\hspace{-2.5mm} \omega(x)G(x)=\omega(-x-1)G(-x-1),  \label{eq:WC2} 
\end{eqnarray}
for $x\in\mathbb{R}$, and the boundary conditions
\begin{eqnarray}
\hspace{-2.5mm}& &\hspace{-2.5mm}
F(x)=0 \hspace{2mm}\text{for } x\in \chi\backslash(-\chi),  \label{eq:BC1} \\
\hspace{-2.5mm}& &\hspace{-2.5mm}
G(x)=0 \hspace{2mm}\text{for } x\in \chi\backslash(-\chi-1),  \label{eq:BC2} 
\end{eqnarray}
then $\mathcal{L}$ is symmetric with respect to $\omega(x)$.
Here we denote by $-\chi$ and $-\chi-1$ the sets $-\chi=\{-x:x\in\chi\}$ and $-\chi-1=\{-x-1:x\in\chi\}$, respectively.
\end{lemma}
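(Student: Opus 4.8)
The plan is to expand both sides of the symmetry identity $\langle\mathcal{L}[p(x)],q(x)\rangle_{\omega(x)}=\langle p(x),\mathcal{L}[q(x)]\rangle_{\omega(x)}$ and match them piece by piece. First I would record the explicit action of $\mathcal{L}$: since $R[f(x)]=f(-x)$ and $TR[f(x)]=f(-x-1)$, one has $\mathcal{L}[p(x)]=F(x)p(-x)+G(x)p(-x-1)+C(x)p(x)$. Substituting this into the inner product turns each side into a sum over $\chi$ of three contributions, coming from the $F$, $G$ and $C$ terms respectively. The diagonal $C$-contribution $\sum_{x\in\chi}\omega(x)C(x)p(x)q(x)$ is manifestly symmetric in $p$ and $q$, so it cancels from the comparison, and it remains to match the reflection ($F$) and shift-reflection ($G$) contributions separately.

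For the $F$-contribution I must show that $\sum_{x\in\chi}\omega(x)F(x)p(-x)q(x)$ equals $\sum_{x\in\chi}\omega(x)F(x)p(x)q(-x)$. The key observation is that the weight relation (\ref{eq:WC1}) says the kernel $\omega(x)F(x)$ is invariant under $x\mapsto-x$, while the boundary condition (\ref{eq:BC1}) forces $F$ to vanish on $\chi\setminus(-\chi)$. I would therefore first use (\ref{eq:BC1}) to restrict the sum to the symmetric subset $\chi\cap(-\chi)$, on which the involution $x\mapsto-x$ acts as a bijection; perform the change of variable $y=-x$; apply (\ref{eq:WC1}) in the form $\omega(-y)F(-y)=\omega(y)F(y)$ to rewrite the summand; and finally reinsert the (vanishing) terms indexed by $\chi\setminus(-\chi)$ to recover a sum over all of $\chi$. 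This yields exactly $\sum_{x\in\chi}\omega(x)F(x)p(x)q(-x)$, as required.

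The $G$-contribution is handled by the identical mechanism, now with the involution $x\mapsto-x-1$ (which satisfies $-(-x-1)-1=x$), the weight relation (\ref{eq:WC2}) guaranteeing that $\omega(x)G(x)$ is invariant under this involution, and the boundary condition (\ref{eq:BC2}) killing $G$ on $\chi\setminus(-\chi-1)$. Restricting to $\chi\cap(-\chi-1)$, substituting $y=-x-1$, and applying (\ref{eq:WC2}) converts $\sum_{x\in\chi}\omega(x)G(x)p(-x-1)q(x)$ into $\sum_{x\in\chi}\omega(x)G(x)p(x)q(-x-1)$. Combining the three matched contributions gives the desired symmetry.

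The step I expect to be the main obstacle --- or at least the one demanding the most care --- is the legitimacy of the two changes of variable. The set $\chi$ is not assumed to be symmetric under either involution, so one cannot naively substitute $x\mapsto-x$ (or $x\mapsto-x-1$) in a sum over $\chi$. The whole point of the boundary conditions is to excise precisely the asymmetric part of $\chi$: after multiplying by $F$ (resp.\ $G$), the only surviving indices lie in $\chi\cap(-\chi)$ (resp.\ $\chi\cap(-\chi-1)$), which \emph{is} invariant under the relevant involution, so the reindexing becomes valid. Verifying that these intersections are indeed mapped bijectively onto themselves, and that no terms are gained or lost when passing back and forth between the restricted and full sums, is the crux of the argument; everything else is bookkeeping resting on the two weight relations.
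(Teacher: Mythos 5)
Your proposal is correct and follows essentially the same route as the paper's proof: both arguments split the inner product into the $F$, $G$, and $C$ contributions and handle the first two by the involutions $x\mapsto-x$ and $x\mapsto-x-1$, using the weight relations (\ref{eq:WC1}), (\ref{eq:WC2}) together with the boundary conditions (\ref{eq:BC1}), (\ref{eq:BC2}) to justify the reindexing. The only (cosmetic) difference is ordering --- you restrict to the symmetric subsets $\chi\cap(-\chi)$ and $\chi\cap(-\chi-1)$ before substituting, whereas the paper reindexes the full sum over $\chi$ onto $-\chi$ (resp.\ $-\chi-1$) first and then uses the weight and boundary conditions to pass back to a sum over $\chi$; the mechanism and the crux you identify (invariance of the intersections under the involutions) are identical.
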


\begin{proof}
According to the conditions, it holds for any functions $p(x)$ and $q(x)$ that
$$
\sum_{x\in\chi}\omega(x)F(x)p(-x)q(x)=\sum_{x\in-\chi}\omega(-x)F(-x)p(x)q(-x)
$$
$$\hspace{1.2cm}
=\sum_{x\in (-\chi)\cap\chi}\omega(x)F(x)p(x)q(-x)
=\sum_{x\in\chi}\omega(x)F(x)p(x)q(-x),
$$
and
$$
\sum_{x\in\chi}\omega(x)G(x)p(-x-1)q(x)=\sum_{x\in(-\chi-1)}\omega(-x-1)G(-x-1)p(x)q(-x-1)
$$
$$\hspace{0cm}
=\sum_{x\in(-\chi-1)\cap\chi}\omega(x)G(x)p(x)q(-x-1)
=\sum_{x\in\chi}\omega(x)G(x)p(x)q(-x-1).
$$
These equations imply that
$$
\sum_{x\in\chi}\omega(x)\big[F(x)p(-x)+G(x)p(-x-1)\big]q(x)=
\sum_{x\in\chi}\omega(x)p(x)\big[F(x)q(-x)+G(x)q(-x-1)\big].
$$
The above equation is equivalent to
$
\langle \mathcal{L}[p(x)], q(x)\rangle_{\omega(x)} = \langle p(x), \mathcal{L}[q(x)]\rangle_{\omega(x)},
$
thus $\mathcal{L}$ is symmetric with respect to $\omega(x)$.
\end{proof}

\par
\begin{lemma}
Assume that $\omega(x)$ is the weight function associated with a Dunkl-shift operator 
$\mathcal{L}=F(x)R+G(x)TR+C(x)$ such that (\ref{eq:WC1}), (\ref{eq:WC2}), (\ref{eq:BC1}) and (\ref{eq:BC2}) hold, 
then it satisfies
$$
\frac{\omega(x+1)}{\omega(x)}=\frac{F(-x-1)G(x)}{F(x+1)G(-x-1)}.
$$
\end{lemma}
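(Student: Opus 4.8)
The plan is to derive this first-order recurrence for $\omega$ purely algebraically from the two symmetry relations (\ref{eq:WC1}) and (\ref{eq:WC2}), using $\omega(-x-1)$ as a bridge that links $\omega(x+1)$ on one side and $\omega(x)$ on the other. Since the target ratio $\omega(x+1)/\omega(x)$ already mixes a forward shift with the value at $x$, and the two hypotheses each relate $\omega$ at a point to $\omega$ at its reflection, the natural strategy is to insert $\omega(-x-1)$ as an intermediate term and let it cancel.

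First I would apply (\ref{eq:WC1}), namely $\omega(x)F(x)=\omega(-x)F(-x)$, with $x$ replaced by $x+1$. This yields $\omega(x+1)F(x+1)=\omega(-x-1)F(-x-1)$, and hence, wherever $F(x+1)\neq 0$,
$$
\frac{\omega(x+1)}{\omega(-x-1)}=\frac{F(-x-1)}{F(x+1)}.
$$
Next I would use (\ref{eq:WC2}) directly. Reading it as $\omega(x)G(x)=\omega(-x-1)G(-x-1)$, it immediately gives, wherever $G(-x-1)\neq 0$,
$$
\frac{\omega(-x-1)}{\omega(x)}=\frac{G(x)}{G(-x-1)}.
$$

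Finally I would multiply these two ratios, so that the common factor $\omega(-x-1)$ cancels telescopically:
$$
\frac{\omega(x+1)}{\omega(x)}=\frac{\omega(x+1)}{\omega(-x-1)}\cdot\frac{\omega(-x-1)}{\omega(x)}=\frac{F(-x-1)}{F(x+1)}\cdot\frac{G(x)}{G(-x-1)}=\frac{F(-x-1)G(x)}{F(x+1)G(-x-1)},
$$
which is exactly the claimed identity. There is no genuine obstacle here; the computation is routine. The only conceptual point worth flagging is the choice to route through $\omega(-x-1)$ rather than $\omega(-x)$: the shifted form of (\ref{eq:WC1}) connects $\omega(x+1)$ to $\omega(-x-1)$, while (\ref{eq:WC2}) connects $\omega(x)$ to the same $\omega(-x-1)$, so their meeting point coincides and the cancellation goes through. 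One should also keep in mind the tacit nonvanishing of $F(x+1)$ and $G(-x-1)$ on the relevant grid, which is already implicit in the ratio form of the statement.
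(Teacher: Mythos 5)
Your proof is correct and is essentially the same argument as the paper's: both combine (\ref{eq:WC1}) and (\ref{eq:WC2}) by dividing them so that an intermediate value of $\omega$ cancels, the only cosmetic difference being that you substitute $x\mapsto x+1$ into (\ref{eq:WC1}) at the start, while the paper forms the ratio $\omega(-x)/\omega(-x-1)$ first and then substitutes $x\mapsto -x-1$ at the end.
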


\begin{proof}
The equations
$$
\omega(x)F(x)=\omega(-x)F(-x),  \quad \omega(x)G(x)=\omega(-x-1)G(-x-1)
$$
imply that
$$
\frac{\omega(-x)}{\omega(-x-1)}=\frac{F(x)G(-x-1)}{F(-x)G(x)},
$$
after substituting $x$ by $-x-1$ then we get the desired result.
\end{proof}

\par
Now we are able to give the weight functions of the exceptional Bannai-Ito operator 
by using the properties of the Gamma function as we did in Section 3. 
However, it takes less steps if the relationship between the weight functions of 
the exceptional Bannai-Ito operator and the Bannai-Ito operator is available, 
since the weight functions of the Bannai-Ito polynomials are already known in \cite{DopBI}.
Below we give the weight function of the exceptional Bannai-Ito polynomials $\{B^{(1)}_{{\bf d},n}(x)\}$ 
and show their orthogonality explicitly. 
These results can be extended to the multiple-step exceptional Bannai-Ito polynomials.

\begin{theorem}
Let $\hat{\omega}^{(1)}(x)$ be the weight function associated with the exceptional Bannai-Ito 
operator $\hat{H}^{(1)}$, and $\omega(x)$ be the weight function of the Bannai-Ito operator $H$, then it holds that
\begin{equation}
\label{eq:weight of XBI}
\hat{\omega}^{(1)}(x)=c(x)\frac{\tilde{\chi}(x)\chi(x)\alpha(x)}{r^{2}(x)}\omega(x),
\end{equation}
where $c(x)$ is a 1-periodic function $c(x+1)=c(x)$, 
and it satisfies $c(-x)=c(x)$.
\end{theorem}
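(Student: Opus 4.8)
The plan is to exhibit $\hat{H}^{(1)}$ as a Dunkl shift operator $\mathcal{L}=F(x)R+G(x)TR+C(x)$ and then read off its weight from the two results already at hand: the symmetry criterion \eqref{eq:WC1}--\eqref{eq:WC2} (Lemma 4.7) and the first-order weight recurrence (Lemma 4.8). Concretely, I would expand $\hat{H}^{(1)}=r(x)H^{(1)}r^{-1}(x)$ from \eqref{normalizations} using the conjugation rules $R\,r^{-1}(x)=r^{-1}(-x)R$ and $TR\,r^{-1}(x)=r^{-1}(-x-1)TR$. Writing $H^{(1)}$ in the form \eqref{eq:opH1} and collecting terms, this produces
\begin{equation}
F(x)=\frac{r(x)}{r(-x)}\,\alpha^{(1)}(x),\qquad
G(x)=\frac{r(x)}{r(-x-1)}\,\beta^{(1)}(x),
\end{equation}
with $\alpha^{(1)}(x)$, $\beta^{(1)}(x)$ given by \eqref{eq:coeH1 ab}, while the multiplication part $C(x)$ is untouched by the conjugation.

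Next I would apply Lemma 4.8 to $\hat{H}^{(1)}$, which gives $\hat{\omega}^{(1)}(x+1)/\hat{\omega}^{(1)}(x)=F(-x-1)G(x)/\big(F(x+1)G(-x-1)\big)$, and substitute the expressions above. The $r$-factors telescope into $r^{2}(x)/r^{2}(x+1)$, and the surviving ratio of $\alpha^{(1)},\beta^{(1)}$ is reduced with the parity properties $\chi(-x)=-\chi(x)$ and $\tilde{\chi}(-x-1)=\tilde{\chi}(x)$. In parallel I would record the same recurrence for $H$ itself, where $F=\alpha$ and $G=\beta$, namely $\omega(x+1)/\omega(x)=\alpha(-x-1)\beta(x)/\big(\alpha(x+1)\beta(-x-1)\big)$. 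I then verify directly that the candidate $W(x):=\tilde{\chi}(x)\chi(x)\alpha(x)\omega(x)/r^{2}(x)$ obeys the \emph{same} ratio $W(x+1)/W(x)$: the $\alpha$- and $\beta$-factors inherited from $\omega(x+1)/\omega(x)$ combine with $\tilde{\chi}(x+1)\chi(x+1)/\big(\tilde{\chi}(x)\chi(x)\big)$ to reproduce the $\alpha^{(1)},\beta^{(1)}$ ratio. Since Lemma 4.8 fixes $\hat{\omega}^{(1)}$ only up to a $1$-periodic factor, this yields $\hat{\omega}^{(1)}(x)=c(x)W(x)$ with $c(x+1)=c(x)$.

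It remains to determine $c$, which I would do by imposing the actual symmetry conditions \eqref{eq:WC1} and \eqref{eq:WC2} of Lemma 4.7 on $\hat{\omega}^{(1)}(x)=c(x)W(x)$. Inserting $F$ and $G$, all $\tilde{\chi}$, $\chi$ and $r$ factors cancel, so that \eqref{eq:WC1} collapses to $c(x)\alpha(x)\omega(x)=c(-x)\alpha(-x)\omega(-x)$ and \eqref{eq:WC2} to $c(x)\beta(x)\omega(x)=c(-x-1)\beta(-x-1)\omega(-x-1)$. Because $\omega$ already satisfies \eqref{eq:WC1} and \eqref{eq:WC2} for $H$, i.e. $\alpha(x)\omega(x)=\alpha(-x)\omega(-x)$ and $\beta(x)\omega(x)=\beta(-x-1)\omega(-x-1)$, the first condition reduces to $c(-x)=c(x)$, and the second is then automatic since $c$ is $1$-periodic and even. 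This establishes the stated form of $\hat{\omega}^{(1)}$.

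I expect the main obstacle to be the bookkeeping in the middle paragraph: checking that $W(x+1)/W(x)$ coincides with $F(-x-1)G(x)/\big(F(x+1)G(-x-1)\big)$ demands careful substitution of the definitions of $\alpha^{(1)}$ and $\beta^{(1)}$ together with repeated use of $\tilde{\chi}(-x-1)=\tilde{\chi}(x)$ and $\chi(-x)=-\chi(x)$, so that every $\chi$, $\tilde{\chi}$ and $r$ factor cancels and only the $\alpha,\beta,\omega$ combination persists. The final reduction to $c(-x)=c(x)$ is comparatively routine once the weight ratio has been matched.
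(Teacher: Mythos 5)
Your proposal is correct and follows essentially the same route as the paper: conjugate $H^{(1)}$ by $r(x)$ to read off the coefficients $F(x)=\frac{r(x)}{r(-x)}\alpha^{(1)}(x)$, $G(x)=\frac{r(x)}{r(-x-1)}\beta^{(1)}(x)$, apply the ratio of Lemma 4.8 to $\hat{H}^{(1)}$, substitute the corresponding ratio $\omega(x+1)/\omega(x)=\alpha(-x-1)\beta(x)/\big(\alpha(x+1)\beta(-x-1)\big)$ for $H$ to identify $\hat{\omega}^{(1)}$ up to a $1$-periodic factor, and then impose (\ref{eq:WC1})--(\ref{eq:WC2}) to force $c(-x)=c(x)$. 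Your final paragraph in fact spells out more explicitly than the paper why the symmetry conditions collapse (all $\chi$, $\tilde{\chi}$, $r$ factors cancel and $\omega$'s own symmetry relations absorb the rest), but the underlying argument is the same.
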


\begin{proof}
According to (2.25), (2.26), 
the normalized 1-step exceptional Bannai-Ito operator is
$$\hspace{-20mm}
\hat{H}^{(1)}=\frac{r(x)\tilde{\chi}(-x)}{r(-x)\chi(x)}(R-I)+\frac{r(x)\alpha(-x-1)\beta(x)\chi(-x-1)}{r(-x-1)\tilde{\chi}(x)}(TR-I)
$$
$$\hspace{-28mm}
+\frac{\alpha(-x-1)\beta(x)\chi(-x-1)}{\tilde{\chi}(x)}\bigg(\frac{r(x)}{r(-x-1)}-1\bigg).
$$
Then from Lemma 4.8, we have
$$
\frac{\hat{\omega}^{(1)}(x+1)}{\hat{\omega}^{(1)}(x)} =
\frac{\tilde{\chi}(x+1)\chi(x+1)r^2(x)\alpha(-x-1)\beta(x)}
{\tilde{\chi}(x)\chi(x)r^2(x+1)\alpha(x)\beta(-x-1)} 
=\frac{\tilde{\chi}(x+1)\chi(x+1)r^2(x)\alpha(x+1)\omega(x+1)}
{\tilde{\chi}(x)\chi(x)r^2(x+1)\alpha(x)\omega(x)},
$$
hence
$$
\hat{\omega}^{(1)}(x)=
c(x)\frac{\tilde{\chi}(x)\chi(x)\alpha(x)\omega(x)}{r^2(x)},
$$
where $c(x)$ is a periodic function of period 1, $c(x+1)=c(x)$. 
Moreover, if we check the relations (\ref{eq:WC1}) and (\ref{eq:WC2}) with respect to the coefficients of $\hat{H}^{(1)}$, 
it turns out that
$$
c(-x)=c(x), \hspace{2mm} c(-x-1)=c(x).
$$
Notice that $c(-x-1)=c(x)$ follows from the conditions $c(x+1)=c(x)$ and $c(-x)=c(x)$, 
thus only the latter two conditions are essential. 
In view of these properties, it may sometimes be convenient to choose $c(x)$ as a constant.
\end{proof}

\par
It remains to derive the exceptional Bannai-Ito grid 
(from the simple roots of exceptional Bannai-Ito polynomials), 
which varies in the choice of the seed solution.
It is known that the Bannai-Ito polynomials have simple and distinct real roots \cite{DopBI} 
as the other COPs do \cite{OP}. 
Specifically, when $n$ is odd, if we assume that $r_{2}=\rho_{2}+n/2$, then the Bannai-Ito polynomial $B_{n}(x)$ has 
$n$ simple roots given by
$$
\rho_2, \hspace{2mm} -\rho_2-1, \hspace{2mm} \rho_2+1, \hspace{2mm}\cdots,\hspace{2mm} 
-\rho_2-\frac{n-1}{2}, \hspace{2mm} \rho_2+\frac{n-1}{2};
$$
when $n$ is even, 
assume that $\rho_1=-\rho_{2}-n/2$, then the $n$ simple roots of $B_{n}(x)$ are
$$
\rho_2, \hspace{2mm} -\rho_2-1, \hspace{2mm} \rho_2+1, \hspace{2mm} -\rho_2-2, \hspace{2mm} \cdots,
\hspace{2mm} \rho_2+\frac{n-2}{2}, \hspace{2mm} -\rho_2-\frac{n}{2}.
$$
These roots can be rewritten into a more elegant form which was called the Bannai-Ito grid: 
$x_s=-1/4+(-1)^s(x_0+s/2+1/4)$ ($s=0,1,\ldots,n-1$) with $x_0=\rho_2$.

\begin{remark}
Note that in the case when $n$ is odd there are 4 possible conditions: $r_{i}-\rho_{j}=n/2$, $i,j=1,2$, 
where we just restrict with the condition $r_{2}=\rho_{2}+n/2$ for the sake of simplicity. 
And in the case when $n$ is even there are also 2 possible conditions: $\rho_1+\rho_2=-n/2$ and 
$r_1+r_{2}=n/2$, 
for the same reason we restrict with the condition $\rho_1+\rho_2=-n/2$.
More details about Bannai-Ito grid can be found in \cite{DopBI}.
\end{remark}

\par
It then follows from Proposition 4.5 and the above results that 
$B^{(1)}_{{\bf d},n}(x)=Q_{{\bf d},n}(x)B_n(x)$, where
\vspace{-1mm}
\begin{empheq}[left={Q_{{\bf d},n}(x)=\empheqlbrace}]{align}
C^{(1)}_{{\bf d},n}(x)+2xC^{(2)}_{\bf d}(x)/(x-\rho_2),\hspace{19.8mm} & \hspace{2mm}
\text{ if } n \text{ odd and }r_{2}=\rho_{2}+\frac{n}{2}, \nonumber \\
C^{(1)}_{{\bf d},n}(x)+nxC^{(2)}_{\bf d}(x)/((x-\rho_2)(x+\rho_2+\frac{n}{2})), & \hspace{2mm}
\text{ if } n \text{ even and }\rho_{1}=-\rho_{2}-\frac{n}{2}. \nonumber
\end{empheq}
Note that $C^{(1)}_{{\bf d},n}(x)$ is a polynomial in $x$, 
$2xC^{(2)}_{\bf d}(x)/(x-\rho_2)$ is a polynomial for $d=1,2,5,6$, 
and $nxC^{(2)}_{\bf d}(x)/(x-\rho_2)(x+\rho_2+\frac{n}{2})$ is a polynomial for $d=1,2$. 
In these cases, the roots of $B_n(x)$ belong to the simple roots of $B^{(1)}_{{\bf d},n}(x)$. 
In other cases only a part of the former belong to the latter.

\par
Let $\chi_{\bf d}^{(1)}$ be the set whose elements come from the exceptional Bannai-Ito grid, 
thus these elements are the simple roots of $B^{(1)}_{{\bf d},N}(x)$.
For simplicity's sake, we rewrite the normalized exceptional Bannai-Ito operator into
$$
\hat{H}^{(1)}=\hat{\alpha}^{(1)}(x)(R-I)+\hat{\beta}^{(1)}(x)(TR-I)+\hat{\gamma}^{(1)}(x),
$$
and consider the eigenvalue equation $\hat{H}^{(1)}[B^{(1)}_{{\bf d},N}(x)]=\lambda_NB^{(1)}_{{\bf d},N}(x)$.
Assume that $x^{(1)}_s\in \chi_{\bf d}^{(1)}$, then this eigenvalue equation becomes
$$
\hat{\alpha}^{(1)}(x^{(1)}_s)B^{(1)}_{{\bf d},N}(-x^{(1)}_s)+\hat{\beta}^{(1)}(x^{(1)}_s)B^{(1)}_{{\bf d},N}(-x^{(1)}_s-1)=0.
$$
If $x^{(1)}_s\in \chi_{\bf d}^{(1)}\backslash(-\chi_{\bf d}^{(1)})$ 
and $x^{(1)}_s\in \chi_{\bf d}^{(1)}\cap(-\chi_{\bf d}^{(1)}-1)$, 
which means $B^{(1)}_{{\bf d},N}(-x^{(1)}_s)\neq 0$ and $B^{(1)}_{{\bf d},N}(-x^{(1)}_s-1)=0$, 
then $\hat{\alpha}^{(1)}(x^{(1)}_s)=0$.
On the other hand, if $x^{(1)}_s\in \chi_{\bf d}^{(1)}\backslash(-\chi_{\bf d}^{(1)}-1)$ 
and $x^{(1)}_s\in \chi_{\bf d}^{(1)}\cap(-\chi_{\bf d}^{(1)})$, 
which means $B^{(1)}_{{\bf d},N}(-x^{(1)}_s-1)\neq 0$ and $B^{(1)}_{{\bf d},N}(-x^{(1)}_s)=0$, 
then $\hat{\beta}^{(1)}(x^{(1)}_s)=0$.
Using these results together with the boundary conditions in Lemma 4.7, we can conclude that
$$
\chi_{\bf d}^{(1)}\backslash(-\chi_{\bf d}^{(1)}) \subseteq \chi_{\bf d}^{(1)}\cap(-\chi_{\bf d}^{(1)}-1), \hspace{2mm}
\chi_{\bf d}^{(1)}\backslash(-\chi_{\bf d}^{(1)}-1) \subseteq \chi_{\bf d}^{(1)}\cap(-\chi_{\bf d}^{(1)}),
$$
hence 
$\chi_{\bf d}^{(1)}=\chi_{\bf d}^{(1)}\cap(-\chi_{\bf d}^{(1)})\cap(-\chi_{\bf d}^{(1)}-1)
+\chi_{\bf d}^{(1)}\backslash(-\chi_{\bf d}^{(1)})+\chi_{\bf d}^{(1)}\backslash(-\chi_{\bf d}^{(1)}-1)$.
The first set in the right-hand side consists of part of the roots of $B_N(x)$, 
while the remaining $2$ sets can be obtained from the simple roots of $\hat{\alpha}^{(1)}(x)$ and $\hat{\beta}^{(1)}(x)$.

\begin{theorem}
The exceptional Bannai-Ito polynomials $\{B^{(1)}_{{\bf d},n}(x)\}$ satisfy the discrete orthogonality relation
\begin{equation}
\label{eq:orthogonality of XBI}
\sum_{x\in\chi_{\bf d}^{(1)}}\hat{\omega}^{(1)}(x)B^{(1)}_{{\bf d},n}(x)B^{(1)}_{{\bf d},m}(x)
=h^{(1)}_{{\bf d},n}\delta_{nm} \hspace{2mm} 
(0\leq n,m<N),
\end{equation}
where $h^{(1)}_{{\bf d},n}$ is constant.
If $N$ is odd and $r_2=\rho_2+N/2$, 
$$
d=1,5: \chi_{\bf d}^{(1)}=\{x_1,\ldots,x_{N-1}\}, \quad
d=2,6: \chi_{\bf d}^{(1)}=\{x_0,\ldots,x_{N}\},
$$
$$
d=3,7: \chi_{\bf d}^{(1)}=\{x_0,\ldots,x_{N-1}\}, \quad
d=4,8: \chi_{\bf d}^{(1)}=\{x_1,\ldots,x_{N}\};
$$
if $N$ is even and $\rho_1=-\rho_2-N/2$,
$$
d=1,4: \chi_{\bf d}^{(1)}=\{x_1,\ldots,x_{N-2}\}, \quad
d=2,3: \chi_{\bf d}^{(1)}=\{x_0,\ldots,x_{N-1}\},
$$
$$
d=5,8: \chi_{\bf d}^{(1)}=\{x_1,\ldots,x_{N-1}\}, \quad
d=6,7: \chi_{\bf d}^{(1)}=\{x_0,\ldots,x_{N-2}\},
$$
where $x_s=-1/4+(-1)^s(\rho_2+s/2+1/4)$.
\end{theorem}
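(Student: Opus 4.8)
The plan is to deduce the orthogonality from the general symmetry principle recalled at the start of this subsection (Lemma 2.4 of \cite{ECH}): if the normalized operator $\hat{H}^{(1)}$ is symmetric with respect to the inner product $\langle\cdot,\cdot\rangle_{\hat{\omega}^{(1)}}$ supported on $\chi_{\bf d}^{(1)}$, and its polynomial eigenfunctions $\{B^{(1)}_{{\bf d},n}(x)\}$ carry pairwise distinct eigenvalues, then they are mutually orthogonal. Since the generalized Darboux transformation is isospectral, $\hat{H}^{(1)}[B^{(1)}_{{\bf d},n}(x)]=\lambda_n B^{(1)}_{{\bf d},n}(x)$ with $\lambda_n$ given by the Bannai-Ito eigenvalue formulas (1.3)--(1.4); these are pairwise distinct for $0\le n<N$ under the genericity conditions of Proposition 4.3. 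Hence the only substantive task is to establish the symmetry of $\hat{H}^{(1)}$ on the correct finite grid.

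To invoke Lemma 4.7 I would write $\hat{H}^{(1)}=\hat{\alpha}^{(1)}(x)R+\hat{\beta}^{(1)}(x)TR+\hat{\gamma}^{(1)}(x)$, absorbing the $-I$ contributions into $\hat{\gamma}^{(1)}$, so that $F=\hat{\alpha}^{(1)}$ and $G=\hat{\beta}^{(1)}$ in the notation of that lemma. The two weight conditions \eqref{eq:WC1}, \eqref{eq:WC2} are in fact forced by the very construction of $\hat{\omega}^{(1)}$ in Theorem 4.9: substituting $\hat{\omega}^{(1)}(x)=c(x)\tilde{\chi}(x)\chi(x)\alpha(x)\omega(x)/r^{2}(x)$ together with the explicit $\hat{\alpha}^{(1)}$, $\hat{\beta}^{(1)}$ read off from the displayed form of $\hat{H}^{(1)}$, and using the symmetries $\chi(-x)=-\chi(x)$, $\tilde{\chi}(-x-1)=\tilde{\chi}(x)$, $c(-x)=c(x)$ together with the fact that $\omega$ already satisfies \eqref{eq:WC1}, \eqref{eq:WC2} for $H$, both identities collapse to the known weight symmetry of the Bannai-Ito operator. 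Thus only the boundary conditions \eqref{eq:BC1}, \eqref{eq:BC2} remain, and these are precisely what pin down the support $\chi_{\bf d}^{(1)}$.

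For the grid I would use the set decomposition established just before the statement,
$$
\chi_{\bf d}^{(1)}=\bigl(\chi_{\bf d}^{(1)}\cap(-\chi_{\bf d}^{(1)})\cap(-\chi_{\bf d}^{(1)}-1)\bigr)\cup\bigl(\chi_{\bf d}^{(1)}\backslash(-\chi_{\bf d}^{(1)})\bigr)\cup\bigl(\chi_{\bf d}^{(1)}\backslash(-\chi_{\bf d}^{(1)}-1)\bigr),
$$
in which the central piece consists of those Bannai-Ito nodes $x_s$ that are simple roots of $B^{(1)}_{{\bf d},N}(x)$, while the two outer pieces are the extra nodes supplied by the zeros of $\hat{\alpha}^{(1)}$ and $\hat{\beta}^{(1)}$. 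The crucial input is the factorization $B^{(1)}_{{\bf d},N}(x)=Q_{{\bf d},N}(x)B_N(x)$ together with the explicit roots of $B_N(x)$ recorded in Remark 4.10: for $N$ odd with $r_2=\rho_2+N/2$, and for $N$ even with $\rho_1=-\rho_2-N/2$, these are $x_s=-1/4+(-1)^s(\rho_2+s/2+1/4)$. I would then treat the eight classes one at a time, using the list of the $\eta_d$ and the parity analysis of Proposition 4.3 to read off which endpoint nodes ($x_0$, $x_N$, or $x_{N-1}$, $x_{N-2}$) are annihilated by the prefactor $Q_{{\bf d},N}$ and which new nodes are created, thereby arriving at the stated sets $\{x_1,\dots,x_{N-1}\}$, $\{x_0,\dots,x_N\}$, and so on.

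The hard part will be this last, case-by-case bookkeeping. The prefactor $Q_{{\bf d},N}$ (a polynomial part $C^{(1)}_{{\bf d},N}$ plus a correction proportional to $C^{(2)}_{\bf d}$, displayed before the theorem) is a genuine polynomial only for certain indices: by the remark preceding the theorem, $2xC^{(2)}_{\bf d}(x)/(x-\rho_2)$ is polynomial for $d=1,2,5,6$ and the even-$N$ correction is polynomial only for $d=1,2$, so in the remaining classes only part of the Bannai-Ito grid persists as roots of $B^{(1)}_{{\bf d},N}$. One must therefore verify node by node that whichever of $\hat{\alpha}^{(1)}(x_s)$, $\hat{\beta}^{(1)}(x_s)$ is required to vanish actually does so, using the explicit forms of $\alpha$, $\beta$, $\chi$, $\tilde{\chi}$ evaluated on the grid, and that \eqref{eq:BC1}, \eqref{eq:BC2} fail just outside the claimed set. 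Once all eight grids are confirmed, the symmetry of $\hat{H}^{(1)}$ is complete, and the discrete orthogonality \eqref{eq:orthogonality of XBI} with a constant $h^{(1)}_{{\bf d},n}$ follows at once from the eigenvalue argument of the first paragraph.
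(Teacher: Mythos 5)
Your proposal is correct and follows essentially the same route as the paper, which proves this theorem implicitly through the preceding development: Lemma 4.7 (symmetry implies orthogonality), Theorem 4.9 (the weight $\hat{\omega}^{(1)}$ satisfies the weight conditions, reducing to the Bannai-Ito weight symmetry exactly as you argue), and the pre-theorem analysis of $\chi_{\bf d}^{(1)}$ via the factorization $B^{(1)}_{{\bf d},N}(x)=Q_{{\bf d},N}(x)B_N(x)$, the boundary conditions, and the set decomposition. The eight-case grid bookkeeping that you defer is likewise left unexecuted in the paper, so your plan matches both the strategy and the level of detail of the original.
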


\begin{rmk}
It should be noted that the weight $\hat{\omega}^{(1)}(x_s)$ is not always positive. 
From the formula (\ref{eq:weight of XBI}) we know that the positivity of $\hat{\omega}^{(1)}(x_s)$ depends on 
that of $\tilde{\chi}(x_s)\chi(x_s)\alpha(x_s)/r^2(x_s)$. 
In fact, we can choose a positive 1-periodic function $c(x)$, 
besides, the Bannai-Ito weight $\omega(x_s)$ is positive by construction \cite{DopBI}. 
Let us rewrite this expression into
$$
\frac{\tilde{\chi}(x)\chi(x)\alpha(x)}{r^2(x)}=
\frac{x^2\xi^2_{d}(x)\alpha(x)}{\eta^2_{d}(x)\tilde{\chi}(x)\chi(x)}
$$
$$
=\frac{x}{\eta_{d}(x)p^{(d)}_{m}(x)-\eta_{d}(-x)p^{(d)}_{m}(-x)}
\cdot
\frac{x\alpha(x)}{\eta_{d}(x)\big(\beta(-x-1)p^{(d)}_{m}(x)+\beta(x)\frac{\xi_{d}(-x-1)}{\xi_{d}(x)}p^{(d)}_{m}(-x-1)\big)}.
$$
It is convenient to discuss the positivity of $E_{\bf d}(x):=E^{(1)}_{d}(x)E^{(2)}_{\bf d}(x)E^{(3)}_{\bf d}(x)$ instead:
$$
E^{(1)}_{d}(x):=\frac{x\alpha(x)}{\eta_{d}(x)}, \hspace{2mm}
E^{(2)}_{\bf d}(x):=\frac{\eta_{d}(x)p^{(d)}_{m}(x)-\eta_{d}(-x)p^{(d)}_{m}(-x)}{x},
$$
$$
E^{(3)}_{\bf d}(x):=
\beta(-x-1)p^{(d)}_{m}(x)+\beta(x)\frac{\xi_{d}(-x-1)}{\xi_{d}(x)}p^{(d)}_{m}(-x-1).
$$
That is, the positivity of the weight $\hat{\omega}^{(1)}(x_s)$ is equivalent with that of $E_{\bf d}(x_s)$. 
Consider the case $m=1$, $N$ is odd and $r_2=\rho_2+N/2$, the expressions of $E_{\bf d}(x)$ follow as:
\begin{eqnarray}
E_{1,1}(x) \hspace{-2.4mm}&=&\hspace{-2.4mm}
(x-\rho_1)(x-\rho_2)\bigg[(x+\frac{1}{2})^2+
\frac{(\rho_{1}+\frac{1}{2})(\rho_{2}+\frac{1}{2})-(\rho_{1}+\rho_{2}+1)r_{1}}{2r_{1}-2\rho_{1}+N-2}N \\
\hspace{-2.4mm}& &\hspace{-2.4mm}
+\frac{\rho_{2}(2\rho_{2}+1)(2\rho_{1}+1)-r_{1}(2\rho_{2}(2\rho_{2}+1)-(2\rho_{1}+1))}{2(2r_{1}-2\rho_{1}+N-2)}
\bigg], \nonumber \\
E_{2,1}(x) \hspace{-2.4mm}&=&\hspace{-2.4mm} 
(x+r_1+\frac{1}{2})(x+\rho_2+\frac{N+1}{2})
\bigg[x^2+\frac{\rho_{1}\rho_{2}-(r_{1}+\frac{1}{2})(\rho_{1}+\rho_{2})}
{2r_1-2\rho_1+N+2}N \\
\hspace{-2.4mm}& &\hspace{-2.4mm}
+\frac{\rho_{1}(2\rho_{2}(2\rho_{2}+1)-(2r_{1}+1))-\rho_{2}(2\rho_{2}+1)(2r_{1}+1)}
{2(2r_{1}-2\rho_{1}+N+2)}, \nonumber \\
E_{3,1}(x) \hspace{-2.4mm}&=&\hspace{-2.4mm}
\bigg[x^2+\frac{\rho_{1}\rho_{2}+(r_{1}-\frac{1}{2})(\rho_{1}+\rho_{2})}
{2r_1+2\rho_1+4\rho_2+N-2}N \\
\hspace{-2.4mm}& &\hspace{-2.4mm}
+\frac{\rho_1((4\rho_2-1)(\rho_2+2r_1-1)-\rho_2)+\rho_{2}(2\rho_{2}-1)(2r_{1}-1)}
{2(2r_1+2\rho_1+4\rho_2+N-2)}\bigg] \nonumber \\
\hspace{-2.4mm}& &\hspace{-2.4mm}
\cdot
\bigg[(x+\frac{1}{2})^2+\frac{(\rho_{1}-\frac{1}{2})(\rho_{2}-\frac{1}{2})+r_1(\rho_{1}+\rho_{2}-1)}
{2r_1+2\rho_1+4\rho_2+N-2}N \nonumber \\
\hspace{-2.4mm}& &\hspace{-2.4mm}
+\frac{r_1((4\rho_2-1)(\rho_2+2\rho_1-1)-\rho_2)+\rho_{2}(2\rho_{2}-1)(2\rho_{1}-1)}
{2(2r_1+2\rho_1+4\rho_2+N-2)}\bigg], \nonumber 
\end{eqnarray}
\vspace{-1.2em}
\begin{eqnarray}
E_{4,1}(x) \hspace{-2.4mm}&=&\hspace{-2.4mm} 
(x-\rho_1)(x-\rho_2)(x+r_1+\frac{1}{2})(x+\rho_2+\frac{N+1}{2}), \\
E_{5,1}(x) \hspace{-2.4mm}&=&\hspace{-2.4mm}
\frac{(N-1)(2\rho_1+2\rho_2+N-1)^2(2r_1-2\rho_1+1)}{4(2\rho_1-2r_1+N-2)^2}
(x-\rho_2)(x+r_1+\frac{1}{2}), \\
E_{6,1}(x) \hspace{-2.4mm}&=&\hspace{-2.4mm} 
\frac{(N+1)(2r_1+2\rho_2-1)^2(2r_1-2\rho_1-1)}{4(2\rho_1-2r_1+N+2)^2}
(x-\rho_1)(x+\rho_2+\frac{N+1}{2}), \\
E_{7,1}(x) \hspace{-2.4mm}&=&\hspace{-2.4mm} 
\frac{(4\rho_2+N-1)^2(2r_1-2\rho_2+1)(2\rho_2-2\rho_1+N-1)}{4(4\rho_2-2\rho_1-2r_1+N-2)^2}
(x-\rho_1)(x+r_1+\frac{1}{2}), 
\end{eqnarray}
\begin{equation}
E_{8,1}(x) = 
\frac{(2r_1+2\rho_1-1)^2(2r_1-2\rho_2-1)(2\rho_2-2\rho_1+N+1)}{4(4\rho_2-2\rho_1-2r_1+N+2)^2}
(x-\rho_2)(x+\rho_2+\frac{N+1}{2}).
\end{equation}
It is not difficult to derive some sufficient conditions for $E_{d,1}(x)> 0$, 
where $x\in\chi^{(1)}_{d,1}$. 
Let us take the case $d=3$ for an example, from (4.26) we know that $E_{3,1}(x)> 0$ if 
$$
\frac{\rho_{1}\rho_{2}+(r_{1}-\frac{1}{2})(\rho_{1}+\rho_{2})}
{2r_1+2\rho_1+4\rho_2+N-2}N
+\frac{\rho_1((4\rho_2-1)(\rho_2+2r_1-1)-\rho_2)+\rho_{2}(2\rho_{2}-1)(2r_{1}-1)}
{2(2r_1+2\rho_1+4\rho_2+N-2)}
$$
and 
$$
\frac{(\rho_{1}-\frac{1}{2})(\rho_{2}-\frac{1}{2})+r_1(\rho_{1}+\rho_{2}-1)}
{2r_1+2\rho_1+4\rho_2+N-2}N
+\frac{r_1((4\rho_2-1)(\rho_2+2\rho_1-1)-\rho_2)+\rho_{2}(2\rho_{2}-1)(2\rho_{1}-1)}
{2(2r_1+2\rho_1+4\rho_2+N-2)}
$$
are both positive. These conditions are immediately satisfied if we assume that 
\begin{equation}
r_1, \rho_1, \rho_2>\frac{1}{2}.
\end{equation}

\par
For the other cases, we list some sufficient conditions below.
These conditions are obtained similarly by observing the right-hand sides of (4.24), (4.25), and (4.27)-(4.31).
\begin{eqnarray}
d=1: \hspace{-2.4mm}& &\hspace{-2.4mm} 0<r_1<\frac{(\rho_1+\frac{1}{2})(\rho_2+\frac{1}{2})}{\rho_1+\rho_2+1}, 
 -\frac{1}{2}<\rho_1<\rho_2+1 \text{ and } \rho_2>-\frac{1}{2}; 
\nonumber \\
d=2: \hspace{-2.4mm}& &\hspace{-2.4mm} \frac{N-3}{2}<r_1<\frac{N-2}{2}, \rho_1<0 \text{ and } 
-\frac{1}{2}<\rho_2<r_1-\frac{N-2}{2};  \nonumber \\
d=4: \hspace{-2.4mm}& &\hspace{-2.4mm} r_1>\frac{N-3}{2}, \rho_1<\frac{1}{2} \text{ and } \rho_2>-\frac{1}{2}; 
\nonumber \\
d=5: \hspace{-2.4mm}& &\hspace{-2.4mm} -1<r_1<0, \rho_1<r_1+\frac{1}{2} \text{ and } \rho_2>-\frac{1}{2};   \nonumber \\
d=6: \hspace{-2.4mm}& &\hspace{-2.4mm} r_1>-\frac{N-3}{4}, \rho_1<\rho_2 \text{ and } 
-\frac{N+1}{4}<\rho_2<-\frac{N-1}{4}; \nonumber \\
d=7: \hspace{-2.4mm}& &\hspace{-2.4mm} r_1>0, -\frac{1}{2}<\rho_1<\rho_2 \text{ and } 
r_1-\frac{1}{2}<\rho_2<r_1+\frac{1}{2}; \nonumber \\
d=8: \hspace{-2.4mm}& &\hspace{-2.4mm} r_1>\rho_2+\frac{1}{2}, \rho_1<\rho_2+\frac{N+1}{2} \text{ and } 
-\frac{N+3}{4}<\rho_2<-\frac{N-1}{4}. \nonumber
\end{eqnarray}
\end{rmk}

\begin{example} 
In this example we demonstrate the orthogonality of the 
exceptional Bannai-Ito polynomials explicitly. Consider the case ${\bf d}=(3,1)$, 
where the seed solution is given by $\phi_{(3,1)}=\xi_3(x)p^{(3)}_1(x)=\xi_3(x)B_1(x;-\rho_1,-\rho_2,r_1,r_2)$, 
and its eigenvalue is $\mu_{(3,1)}=r_1+r_2-1$.
According to Proposition 4.5, we can give the related exceptional Bannai-Ito polynomials:
$$
B^{(1)}_{(3,1),n}(x)=
(\lambda_n-\beta)\frac{\hat{\phi}_{(3,1)}^{(0)}(x)-\hat{\phi}_{(3,1)}^{(0)}(-x)}{x}B_n(x)
-(\mu_{(3,1)}-\beta)\frac{\hat{\phi}_{(3,1)}^{(0)}(x)}{x}(B_n(x)-B_n(-x)),
$$
where 
$\hat{\phi}_{(3,1)}^{(0)}(x)=\eta_3(x)p^{(3,1)}_{1}(x)=
(x-\rho_1)(x-\rho_2)B_1(x;-\rho_1,-\rho_2,r_1,r_2).$
Assume that $N$ is odd and $r_2=\rho_2+\frac{N}{2}$, 
then 
$\chi^{(1)}_{(3,1)}=\{\rho_2, -\rho_2-1,  \rho_2+1, \cdots, -\rho_2-\frac{N-1}{2}, \rho_2+\frac{N-1}{2}\}$ 
is the corresponding exceptional Bannai-Ito grid.
The discrete orthogonality (\ref{eq:orthogonality of XBI}) holds for the weight function 
\vspace{-2mm}$$
\hat{\omega}^{(1)}(x)=\frac{\sin(2\pi x)\Gamma(x-r_1+\frac{1}{2})\Gamma(-x-r_1+\frac{1}{2})\Gamma(x+\rho_1+1)\Gamma(-x+\rho_1)}
{\Gamma(x+r_2+\frac{1}{2})\Gamma(-x+r_2+\frac{1}{2})\Gamma(x-\rho_2+1)\Gamma(-x-\rho_2)y_1(x)y_2(x)},
$$
where $y_1(x)$, $y_2(x)$ are the polynomials
$$
y_1(x)=(r_{1}+r_{2}+\rho_{1}+\rho_{2}-1)x^{2}+
r_{1}\rho_{1}\rho_{2}+r_{2}\rho_{1}\rho_{2}+r_{1}r_{2}\rho_{1}+r_{1}r_{2}\rho_{2}-\rho_{1}\rho_{2}
$$
$$\hspace{-28mm}
-\frac{r_{1}\rho_{1}+r_{1}\rho_{2}+r_{2}\rho_{1}+r_{2}\rho_{2}}{2}+\frac{\rho_{1}+\rho_{2}}{4},
$$
$$
y_2(x)=(r_{1}+r_{2}+\rho_{1}+\rho_{2}-1)x^{2}+(r_{1}+r_{2}+\rho_{1}+\rho_{2}-1)x+
r_{1}\rho_{1}\rho_{2}+r_{2}\rho_{1}\rho_{2}+r_{1}r_{2}\rho_{1}+r_{1}r_{2}\rho_{2}
$$
$$\hspace{-26mm}
-r_{1}r_{2}-\frac{r_{1}\rho_{1}+r_{1}\rho_{2}+r_{2}\rho_{1}+r_{2}\rho_{2}-r_1-r_2}{2}+\frac{\rho_{1}+\rho_{2}-1}{4}.
$$
The orthogonality constant in the right-hand side of (\ref{eq:orthogonality of XBI}) is given by 
$h^{(1)}_{(3,1),n}=h^{(1)}_{(3,1),0}u^{(1)}_{1}\cdots u^{(1)}_{n}$, where
$h^{(1)}_{(3,1),0}=\sum_{x\in\chi^{(1)}_{(3,1)}}\hat{\omega}^{(1)}(x)B^{(1)}_{(3,1),0}(x)^2$, and
\begin{empheq}[left={u^{(1)}_{n}=\empheqlbrace}]{align}
-\frac{n(2r_1+2\rho_2+N-n)^2(2r_1-2\rho_1+N-n)(2r_1+2\rho_2+N-n-2)}
{8(2r_1-2\rho_1+N-2n)^2(\rho_1+\rho_2+\frac{n}{2}-1)}, \hspace{12mm} & \text{if n is even,} \nonumber \\
\scalebox{0.90}{$
\begin{aligned}
\frac{2(N-n)(2r_1-2\rho_1-n)(2r_1-2\rho_2-n)(\rho_1+\rho_2+\frac{n-1}{2})(\rho_1+\rho_2+\frac{n+1}{2})
(\rho_1-\rho_2-\frac{N-n}{2})}
{(2r_1-2\rho_1+N-2n)^2(2r_1+2\rho_2+N-n+1)(2r_1+2\rho_2+N-n-1)},  
\end{aligned}$}
& \text{ if n is odd.} \nonumber 
\end{empheq}
\end{example}
\vspace{0.1mm}

\section{Concluding remarks}
\par
This paper starts from the original idea that exceptional Dunkl shift operators can be obtained from 
the intertwining relations which always appear in the Darboux transformations. 
We call this method a generalized Darboux transformation (on first-order difference operators).
After the 1-step and the multiple-step exceptional Dunkl shift operators being successfully obtained through this method, 
we are able to give the exceptional Bannai-Ito operators with the restriction on certain coefficients. 
Especially, in this paper we mainly focus on the 1-step exceptional Bannai-Ito polynomials, 
which form the eigenpolynomials of the normalized 1-step exceptional Bannai-Ito operator. 

\par
In this generalized Darboux transformation the crucial role is played by 
the operator $\mathcal{F}_{\phi}$ which annihilates the seed solution $\phi(x)$. 
In fact, we should realize that 
the choice of the operator $\mathcal{F}_{\phi}$ in Section 2 is not unique. 
The only restriction we have used is that $\mathcal{F}_{\phi}$ is also a Dunkl shift operator. 
Without loss of generality, we may define $\mathcal{F}$ as
\begin{equation}
\label{eq:general opF}
\mathcal{F}=-\frac{R-I+f_1(x)}{\phi(-x)-\phi(x)+f_1(x)\phi(x)}+\frac{TR-I+f_2(x)}{\phi(-x-1)-\phi(x)+f_2(x)\phi(x)}
\end{equation}
such that $\mathcal{F}[\phi(x)]=0$, and correspondingly, 
$$
\chi(x)=\phi(x)-\phi(-x)-f_1(x)\phi(x), \hspace{2mm}
\tilde{\chi}(x)=\phi(-x-1)-\phi(x)+f_2(x)\phi(x).
$$
In the case regarding $\mathcal{F}_{\phi}$, we have let 
\begin{equation}
f_1(x)=0, \hspace{2mm}
f_2(x)=(\beta(-x-1)+\beta(x))/\beta(x).
\end{equation}
Recall that the 1-step exceptional Dunkl shift operator has the form  
$$
H^{(1)}=\alpha^{(1)}(x)(R-I)+\beta^{(1)}(x)(TR-I)+\gamma^{(1)}(x),
$$
then from the intertwining relation 
$
\mathcal{F}\circ H=H^{(1)}\circ\mathcal{F}
$
one can obtain the coefficients of $H^{(1)}$ by comparing the coefficients of the operators
$RTR$, $T$, $TR$, $R$, $I$ appeared in each side. 
Specifically,  from the coefficients of $RTR$ and $T$ we have
\begin{eqnarray}
\alpha^{(1)}(x) \hspace{-2.4mm}&=&\hspace{-2.4mm}
\frac{\beta(-x)[\phi(x-1)-\phi(-x)+f_2(-x)\phi(-x)]}{\phi(x)-\phi(-x)-f_1(x)\phi(x)}, \\
\beta^{(1)}(x) \hspace{-2.4mm}&=&\hspace{-2.4mm}
\frac{\alpha(-x-1)[\phi(-x-1)-\phi(x+1)-f_1(-x-1)\phi(-x-1)]}{\phi(-x-1)-\phi(x)+f_2(x)\phi(x)}.
\end{eqnarray}
After substituting (5.3), (5.4) into the remaining three equations with respect to the coefficients 
of $TR$, $R$, $I$ we can derive three different expressions of $\gamma^{(1)}(x)$ 
(which are omitted in view of their length). 
Of course, these three expressions of $\gamma^{(1)}(x)$ must be equal to each other, 
hence one can derive the restrictions regarding $f_1(x)$ and $f_2(x)$ from this fact. 
However, this is not an easy task as it seems to be, since the equations with respect to $\gamma^{(1)}(x)$ are 
complicated and difficult to solve. 
Three other feasible cases we have found are:
\vspace{-2mm}
\begin{eqnarray}
f_1(x)=0, \hspace{-2mm}& &\hspace{-2mm} f_2(x)=0; \\
f_1(x)=(\alpha(-x)+\alpha(x))/\alpha(x), \hspace{-2mm}& &\hspace{-2mm} f_2(x)=0;  \\
f_1(x)=(\alpha(-x)+\alpha(x))/\alpha(x), \hspace{-2mm}& &\hspace{-2mm} f_2(x)=(\beta(-x-1)+\beta(x))/\beta(x).
\end{eqnarray}

\par
As for the general cases of $f_1(x)$ and $f_2(x)$, 
we shall leave it as an open problem for the readers of interest.

\par
If one look at the 1-step exceptional eigenfunction $\mathcal{F}[B_n(x)]$, 
the following two expressions can be obtained similarly as we did in the proof of Lemma 4.1:
\begin{eqnarray*}
\beta(x)\chi(x)\tilde{\chi}(x)\mathcal{F}[B_n(x)] 
\hspace{-2.4mm}&=&\hspace{-2.4mm} 
\left(\lambda_n+\alpha(x)f_1(x)+\beta(x)f_2(x)\right)B_n(x)(\phi(-x)-\phi(x)) \\
\hspace{-2.4mm}& &\hspace{-2.4mm}
-\left(\mu+\alpha(x)f_1(x)+\beta(x)f_2(x)\right)\phi(x)(B_n(-x)-B_n(x)) \\
\hspace{-2.4mm}& &\hspace{-2.4mm}
+(\lambda_n-\mu)f_1(x)B_n(x)\phi(x), \\
\alpha(x)\chi(x)\tilde{\chi}(x)\mathcal{F}[B_n(x)] \hspace{-2.4mm}&=&\hspace{-2.4mm} 
\left(\mu+\alpha(x)f_1(x)+\beta(x)f_2(x)\right)\phi(x)(B_n(-x-1)-B_n(x)) \\
\hspace{-2.4mm}& &\hspace{-2.4mm} 
-\left(\lambda_n+\alpha(x)f_1(x)+\beta(x)f_2(x)\right)B_n(x)(\phi(-x-1)-\phi(x)) \\
\hspace{-2.4mm}& &\hspace{-2.4mm} 
+(\mu-\lambda_n)f_2(x)B_n(x)\phi(x).
\end{eqnarray*}
Recall that for the coefficients $\alpha(x)$, $\beta(x)$ of the Bannai-Ito operator, 
$\alpha(-x)+\alpha(x)$ and $\beta(-x-1)+\beta(x)$ are both constants (see (2.23)), 
hence $\alpha(x)f_1(x)+\beta(x)f_2(x)$ is always constant in the four cases with respect to 
(5.2), (5.5), (5.6), (5.7). 
Thus, one can derive 1-step exceptional Bannai-Ito polynomials from the normalizations of these expressions. 
It turns out (5.2), (5.5), (5.6), (5.7) actually lead to different exceptional polynomials. 
Discussions on the orthogonality with respect to (5.5), (5.6), (5.7) can be made in the same manner as that of (5.2). 
In view of this fact, there are more than one type of exceptional Bannai-Ito polynomials. 
This is the nontrivial aspect of our ``generalized" Darboux transformation, due to the non-uniqueness of $\mathcal{F}$.
\vspace{-1em}

\section*{Acknowledgements}
\par
The authors thank the referees for their thorough reading and helpful suggestions.

\vspace{-1em}
\appendix
\section{}
\par
For readers' convenience, we list some data for the coefficients 
$\mu_{d,m}-\beta$, $\lambda_n-\mu_{d,m}$ and $C_{d,m,n}$, 
which have been used in the proof of Proposition 4.3.

\par
For $m$ is even, we have 
\vspace{-2mm}\begin{eqnarray}
\mu_{1,m}-\beta \hspace{-2.4mm}&=&\hspace{-2.4mm} 
\frac{m}{2}-r_1-r_2, \\
\mu_{2,m}-\beta \hspace{-2.4mm}&=&\hspace{-2.4mm} 
\frac{m}{2}-\rho_1-\rho_2, \\
\mu_{3,m}-\beta \hspace{-2.4mm}&=&\hspace{-2.4mm} 
\frac{m}{2}-r_1-r_2-\rho_1-\rho_2, \\
\mu_{4,m}-\beta \hspace{-2.4mm}&=&\hspace{-2.4mm} 
\frac{m}{2},
\end{eqnarray}
for $m$ is odd, we have 
\vspace{-2mm}\begin{eqnarray}
\mu_{5,m}-\beta \hspace{-2.4mm}&=&\hspace{-2.4mm} 
\frac{m}{2}-r_2-\rho_1, \\
\mu_{6,m}-\beta \hspace{-2.4mm}&=&\hspace{-2.4mm} 
\frac{m}{2}-r_1-\rho_2, \\
\mu_{7,m}-\beta \hspace{-2.4mm}&=&\hspace{-2.4mm} 
\frac{m}{2}-r_2-\rho_2, \\
\mu_{8,m}-\beta \hspace{-2.4mm}&=&\hspace{-2.4mm} 
\frac{m}{2}-r_1-\rho_1.
\end{eqnarray}

\par
For $m$ is odd and $n$ is odd, we have 
\begin{eqnarray}
\lambda_n-\mu_{1,m} \hspace{-2.4mm}&=&\hspace{-2.4mm} 
\frac{m-n}{2}, \\
\lambda_n-\mu_{2,m} \hspace{-2.4mm}&=&\hspace{-2.4mm} 
r_1+r_2-\rho_1-\rho_2+\frac{m-n}{2}, \\
\lambda_n-\mu_{3,m} \hspace{-2.4mm}&=&\hspace{-2.4mm} 
-\rho_1-\rho_2+\frac{m-n}{2}, \\
\lambda_n-\mu_{4,m} \hspace{-2.4mm}&=&\hspace{-2.4mm} 
r_1+r_2+\frac{m-n}{2}, 
\end{eqnarray}
for $m$ is even and $n$ is odd, we have 
\begin{eqnarray}
\lambda_n-\mu_{5,m} \hspace{-2.4mm}&=&\hspace{-2.4mm} 
r_1-\rho_1+\frac{m-n}{2}, \\
\lambda_n-\mu_{6,m} \hspace{-2.4mm}&=&\hspace{-2.4mm} 
r_2-\rho_2+\frac{m-n}{2}, \\
\lambda_n-\mu_{7,m} \hspace{-2.4mm}&=&\hspace{-2.4mm} 
r_1-\rho_2+\frac{m-n}{2}, \\
\lambda_n-\mu_{8,m} \hspace{-2.4mm}&=&\hspace{-2.4mm} 
r_2-\rho_1+\frac{m-n}{2}.
\end{eqnarray}

\par
For $m$ is even and $n$ is even, we have 
\begin{eqnarray}
C_{1,m,n} \hspace{-2.4mm}&=&\hspace{-2.4mm} 
-\frac{(r_1+r_2-\frac{m}{2})(r_1+r_2-\frac{n}{2})(\frac{m-n}{2})(r_1+r_2-\rho_1-\rho_2)}
{(r_1+r_2-\rho_1-\rho_2-m)(r_1+r_2-\rho_1-\rho_2-n)}, \\
C_{2,m,n} \hspace{-2.4mm}&=&\hspace{-2.4mm} 
\frac{(\rho_1+\rho_2-\frac{m}{2})(r_1+r_2-\frac{n}{2})(r_1+r_2-\rho_1-\rho_2+\frac{m-n}{2})(r_1+r_2-\rho_1-\rho_2)}
{(r_1+r_2-\rho_1-\rho_2+m)(r_1+r_2-\rho_1-\rho_2-n)}, \\
C_{3,m,n} \hspace{-2.4mm}&=&\hspace{-2.4mm} 
\frac{(r_1+r_2+\rho_1+\rho_2-\frac{m}{2})(r_1+r_2-\frac{n}{2})(\rho_1+\rho_2-\frac{m-n}{2})(r_1+r_2-\rho_1-\rho_2)}
{(r_1+r_2+\rho_1+\rho_2-m)(r_1+r_2-\rho_1-\rho_2-n)}, \\
C_{4,m,n} \hspace{-2.4mm}&=&\hspace{-2.4mm} 
\frac{(-\frac{m}{2})(r_1+r_2-\frac{n}{2})(r_1+r_2+\frac{m-n}{2})(r_1+r_2-\rho_1-\rho_2)}
{(r_1+r_2+\rho_1+\rho_2+m)(r_1+r_2-\rho_1-\rho_2-n)},
\end{eqnarray}
for $m$ is odd and $n$ is even, we have 
\begin{eqnarray}
C_{5,m,n} \hspace{-2.4mm}&=&\hspace{-2.4mm} 
\frac{(r_2+\rho_1-\frac{m}{2})(r_1+r_2-\frac{n}{2})(r_1-\rho_1+\frac{m-n}{2})(r_1+r_2-\rho_1-\rho_2)}
{(r_1-r_2-\rho_1+\rho_2+m)(r_1+r_2-\rho_1-\rho_2-n)}, \\
C_{6,m,n} \hspace{-2.4mm}&=&\hspace{-2.4mm} 
-\frac{(r_1+\rho_2-\frac{m}{2})(r_1+r_2-\frac{n}{2})(r_2-\rho_2+\frac{m-n}{2})(r_1+r_2-\rho_1-\rho_2)}
{(r_1-r_2-\rho_1+\rho_2-m)(r_1+r_2-\rho_1-\rho_2-n)}, \\
C_{7,m,n} \hspace{-2.4mm}&=&\hspace{-2.4mm} 
\frac{(r_2+\rho_2-\frac{m}{2})(r_1+r_2-\frac{n}{2})(r_1-\rho_2+\frac{m-n}{2})(r_1+r_2-\rho_1-\rho_2)}
{(r_1-r_2+\rho_1-\rho_2+m)(r_1+r_2-\rho_1-\rho_2-n)}, \\
C_{8,m,n} \hspace{-2.4mm}&=&\hspace{-2.4mm} 
-\frac{(r_1+\rho_1-\frac{m}{2})(r_1+r_2-\frac{n}{2})(r_2-\rho_1+\frac{m-n}{2})(r_1+r_2-\rho_1-\rho_2)}
{(r_1-r_2+\rho_1-\rho_2-m)(r_1+r_2-\rho_1-\rho_2-n)}.
\end{eqnarray}


\end{document}